\documentclass[12pt]{article}
\usepackage{amssymb}
\usepackage{amsmath}
\usepackage{amsthm}
\usepackage{graphicx}
\usepackage{typearea}
\usepackage{cite}
\usepackage{enumerate}
\usepackage{anyfontsize}

\usepackage{float}

\usepackage{enumitem}  
\usepackage{calc} 

\setlist{labelindent=1pt,itemsep=.5em}
\setlist[itemize]{leftmargin=1.2cm}
\setlist[enumerate]{itemindent=0em,leftmargin=1.2cm}
\setlist[enumerate,1]{label={\upshape(\roman*)}}

\usepackage{caption}

\usepackage[dvipsnames]{xcolor}
\usepackage{makeidx}         
\usepackage{graphicx}
\usepackage{multicol}
\usepackage{nicematrix}
\usepackage[bottom]{footmisc}
\makeindex
\usepackage[utf8]{inputenc}
\usepackage[T1]{fontenc}
\usepackage[english]{babel}
\usepackage{marginnote}
\usepackage{tikz-cd}

\usepackage{hyperref}
\hypersetup{
	colorlinks   = true,
	citecolor    = black,
	linkcolor =black,
	urlcolor=black,
}

\usepackage[capitalise]{cleveref} 

\newtheorem{theorem}{Theorem}
\newtheorem{lemma}[theorem]{Lemma}
\newtheorem{definition}[theorem]{Definition}
\newtheorem{proposition}[theorem]{Proposition}
\newtheorem{corollary}[theorem]{Corollary}
\newtheorem{example}[theorem]{Example}

\newtheorem{remark}[theorem]{Remark}

\newcounter{propertycounter}
\newcounter{propertycounterright}
\begin{document}
		
		\title{Hom-unitality and hom-associative structures}
		
		\author{
			Germ\'{a}n Garc\'{i}a Butenegro \footnote{E-mail: gegarcb@gmail.com}\, ,
			Abdennour Kitouni \footnote{E-mail: abdennour.kitouni@gmail.com}\, ,
			Sergei Silvestrov\footnote{ E-mail: sergei.silvestrov@mdu.se
				\text{(Corresponding author)}}
			\\ \\
			{\fontsize{12}{10.8}\normalshape
				\footnotemark[3] {} Department of Business and Mathematics,} \\
			{\fontsize{12}{10.8}\normalshape
				Faculty of Philosophy,} \\
			{\fontsize{12}{10.8}\normalshape
				M{\"a}lardalen University,} \\
			{\fontsize{12}{10.8}\normalshape Box 883, 72123 V\"{a}ster{\aa}s, Sweden}
		}
		
		\date{}
		
		
		
		\maketitle
		
		\abstract{
We study hom-associative structures on general, possibly non-associative algebras, focusing on one-sided and two-sided unital algebras. New characterizations and aspects of these structures, along with some important subclasses, are explored for nonassociative algebras. By exploiting the observation that the twisting linear map in the hom-associativity axiom of one-sided unital hom-associative algebras is a left or right multiplication operator by an element of the algebra (obtained by the action of the twisting map on a corresponding one-sided unity), a new characterization of the multiplicative twisting operators (or, in other words, the multiplicative hom-associative algebras) is established for one-sided unital algebras. This demonstrates a strong connection between multiplicativity in hom-associative structures and the idempotents of the algebra, thereby further enhancing our understanding of the structure and special nature of multiplicative hom-associative algebra structures as a special subclass of arbitrary hom-associative structures with arbitrary linear twisting maps.
Furthermore, new insights into subspaces and a subalgebra of hom-unities, that is, elements that induce hom-associativity by multiplication, are obtained. Moreover, because non-unital hom-associative algebras need not be twisted by a multiplication operator, a unitalization process is employed to describe a subalgebra of two-sided hom-unities that induce hom-associative structures on such algebras. This is formulated in terms of the eigenspaces of multiplication operators within the algebra. Additionally, the obtained insights and general results about the structure and characterization of hom-algebra structures are applied to some important known general classes of non-associative algebras, such as commutative, possibly non-associative algebras, Cayley-Dickson algebras, Leibniz algebras, and hom-Leibniz algebras.
		}
		
		\noindent
		\textbf{Keywords}: hom-associative algebras, unity, hom-unity \newline
		\noindent
		\textbf{MSC2020}: 17D30, 17B61, 17A30, 17A35
		
		
		\tableofcontents

		\section{Introduction}
		Hom-algebra structures provide a unified framework for the joint study of seemingly unrelated classes of algebraic structures by relating their defining axioms through linear twisting maps (unary operations) that deform these axioms in specific and non-trivial ways. Such deformations establish connections between the axioms of various related but distinct known and new classes of algebraic structures, enabling their simultaneous consideration and classification and uncovering common or distinct properties and relationships between their substructures and morphisms. In particular, hom-algebra structures of a given type include their classical non-twisted counterparts for special choices of twisting maps in the defining identities, creating additional opportunities for deformations, extensions of cohomological structures, and representations.
		Over the last two decades, following pioneering works \cite{Hartwig-Larsson-Silvestrov,LarssonSilvJA2005:QuasiHomLieCentExt2cocyid,Larsson-Silvestrov-QuasiLiealg,LSGradedquasiLiealg,LarssonSilvestrov-quasidefal2twderiv,ms:homstructure,MakSil-formaldefs}, hom-algebra structures have evolved into a broad and fruitful area of mathematics, exhibiting numerous connections to fundamental non-commutative and non-associative structures and concepts in non-commutative geometry, as well as offering possibilities for a unified treatment and applications to models in mathematical physics based on various important non-commutative or potentially non-associative structures.
		
		The area of hom-algebra structures originated with the work of Hartwig, Larsson, and Silvestrov \cite{Hartwig-Larsson-Silvestrov} in 2003, where general hom-Lie algebras and the more general quasi-hom-Lie algebras were introduced, together with a general method for constructing quasi-deformations of Witt and Virasoro algebras based on the discretization of vector fields via twisted derivations ($\sigma$-derivations). This approach was motivated by the discrete modifications of differential calculus and the $q$-deformed Jacobi identities observed for specific $q$-deformed algebras and their representations in physics, as well as in $q$-deformed differential calculus and homological algebra. Following \cite{Hartwig-Larsson-Silvestrov}, Larsson and Silvestrov carried out a systematic study of central extensions of quasi-hom-Lie algebras in \cite{LarssonSilvJA2005:QuasiHomLieCentExt2cocyid}, first appearing in 2004. At the same time, Larsson and Silvestrov introduced more general quasi-Lie algebras and quasi-Leibniz algebras in \cite{Larsson-Silvestrov-QuasiLiealg} (2004), and general color quasi-Lie algebras ($\Gamma$-graded $\epsilon$-quasi-Lie algebras) in \cite{LSGradedquasiLiealg} (2005), including color quasi-hom-Lie algebras and color hom-Lie algebras, quasi-Lie superalgebras, quasi-hom-Lie superalgebras, hom-Lie superalgebras, and hom-Lie algebras. Sigurdsson and Silvestrov continued this work in \cite{Sigurdsson-Silvestrov-Czech:witt} (2006) and \cite{SigSilvGLTbdSpringer2009-witt-centrext} (2009), where color quasi-Lie algebras of Witt type and their central extensions were studied. In a further fundamental development, important hom-Lie admissible algebras were discovered by Makhlouf and Silvestrov in \cite{ms:homstructure} (2006). In particular, \cite{ms:homstructure} introduced hom-associative algebras and showed that they are hom-Lie admissible, meaning that they yield hom-Lie algebras via the commutator map as a new product, thus constituting a natural generalization of associative algebras as Lie-admissible algebras leading to Lie algebras through the commutator map. Moreover, \cite{ms:homstructure} describes several other interesting classes of hom-Lie admissible algebras extending certain non-associative algebras to more general hom-algebra counterparts, as well as examples of finite-dimensional hom-Lie algebras. In \cite{MakSil-formaldefs} (2007), the study of formal deformations and cohomology of hom-algebras was initiated, giving rise to numerous significant developments in cohomology and formal deformation theory for various hom-algebra structures over the past two decades.
		
		Hom-associative algebras, introduced in \cite{ms:homstructure}, are the first examples of \emph{hom-Lie admissible} algebras, where the commutator induces a hom-Lie algebra. They have been extensively studied, including deformations, extensions, and Hochschild cohomology in the multiplicative case \cite{MakSil-formaldefs,Gohr:HAsurj,HurMak:HAcohomology,ArfFraMak:morphisms} and low-dimensional classifications \cite{MakZah:HAclassification}. The main construction method is \emph{twisting} the product of an associative algebra by an algebra homomorphism \cite{Yau:HAhomology}, a process applicable to various algebraic structures, including alternative and Jordan algebras \cite{Mak:HomAltHomJordan}. While many hom-associative algebras arise from twists, non-twist examples exist even in dimension three \cite{MakZah:HAclassification,GarButSil:ZeroDivision}. Other notable constructions leading to many non-twist examples include the hom-association process and hom-associative Ore extensions \cite{BackRic:HAWeyl,BackRicSil:HAOreWeakUnit}. Hom-algebras in which the twisting map is an algebra homomorphism are called \emph{multiplicative} hom-algebras. Hom-algebras from this special subclass possess many additional cohomological and divisibility properties \cite{Yau:HAhomology,AmmEjbMak:cohomology,ArfFraMak:morphisms,Mak:HomAltHomJordan,GarButKitSil:divisibility}, and the hom-associative ones are classified in low dimensions \cite{MakZah:HAclassification}. Numerous examples of hom-algebras are multiplicative, as they can be constructed by twisting \cite{Yau:HAhomology} or by employing algebra morphisms in other ways \cite{BackRic:HAWeyl,BackRicSil:HAOreWeakUnit}.
		Although not strictly necessary, multiplicativity often facilitates the study of cohomology and representations \cite{AmmEjbMak:cohomology,ArfFraMak:morphisms,Mak:HomAltHomJordan}. Simultaneously, multiplicative hom-algebras form a rather special subclass of hom-algebras, as multiplicativity is generally a restrictive condition. Any insight into the role and consequences of multiplicativity or non-multiplicativity of various classes of hom-algebra structures and the classifications or characterizations of multiplicative or non-multiplicative algebra structures is one of the important fundamental problems in the investigation, classification, construction, and application of various hom-algebra structures. It is deeply concerned with the interplay of multiplicativity of the twisting maps with the defining axioms of the class of hom-algebra structures viewed as special type of
		deformations or mutations of the axioms of the original (untwisted) class of algebras.
		
		Unitality in hom-associative algebras \cite{FreGohr:UnitCond,FreGohrSil:UnitalSurj} can be seen as strengthening hom-associativity so that it implies or nearly implies classical associativity \cite{FreGohr:UnitCond}. This observation motivates the introduction of \emph{hom-unities} \cite{FreGohrSil:UnitalSurj}, also referred to as \emph{weak unities} \cite{BackRicSil:HAOreWeakUnit,BackRic:HAWeyl}, which are elements $\omega$ satisfying the hom-unitality condition $\omega \cdot x = \alpha(x)$. Hom-algebras equipped with a hom-unity have previously been called \emph{unital} \cite{ms:homstructure,MakZah:HAclassification} or \emph{weakly unital} \cite{BackRicSil:HAOreWeakUnit,FreGohr:UnitCond}. For consistency with our previous work \cite{GarButSil:ZeroDivision}, we adopt the terminology \emph{hom-unital}, interpreting the prefix ``hom-X'' as indicating that the axiom $X$ is modified by a twisting map. The notion of hom-unity provides a weakened form of unitality adapted to the twisted setting of hom-algebras.
		
		While classical unitality ensures that multiplication interacts with the identity element in a strict manner, hom-unitality relaxes this requirement by incorporating the twisting map, making it compatible with the hom-associative structure. Hom-unities play an important role in the structural analysis and construction of hom-associative algebras. They often arise naturally in deformation processes, where classical identities are replaced by twisted analogs to maintain consistency with the underlying hom-structure. For example, in Ore-type extensions and hom-analogs of Weyl algebras \cite{BackRicSil:HAOreWeakUnit,BackRic:HAWeyl}, the existence of a hom-unity ensures that the extended algebra retains a generalized identity element compatible with the twisting map. Similarly, in the classification results for hom-associative algebras \cite{MakZah:HAclassification}, hom-unities provide a useful criterion for distinguishing between strongly and weakly twisted structures. Beyond theoretical interest, these constructions have applications in quantum deformations and noncommutative geometry, where hom-type identities model symmetries that are no longer strictly associative but remain coherent under twisting.
		
		The goal of this article is to study in depth the connection between unitality and hom-unitality in arbitrary algebras and establish hom-associativity conditions in terms of one-sided unities, hom-unities, and multiplication operators within the algebra.

        The constructions and results of this article aim to approach this problem in the full generality of arbitrary non-associative algebras.

        We generalize the associativity conditions for hom-associative algebras established by Fr\'{e}gier, Gohr, and Silvestrov \cite{FreGohrSil:UnitalSurj}. In this paper, associativity conditions are derived from two-sided unitality. We provide conditions for left- and right-unital algebras, which imply those in \cite{FreGohrSil:UnitalSurj} in the two-sided unital case. Fr\'{e}gier and Gohr \cite{FreGohr:UnitCond} used the nucleus and ideals to describe a subalgebra of a given two-sided unital algebra that encodes all hom-associative structures of the algebra. We describe this subalgebra explicitly in terms of commutation, association, and zero division relations, that is, via the center, nucleus, and annihilators. We extend their results to one-sided unital algebras by describing a subspace of one-sided multiplication operators that contains a subalgebra encoding all hom-associative structures of a given one-sided unital algebra. Furthermore, we examine the problem in non-unital algebras by describing a subspace encoding hom-associative structures given by multiplication operators and use a unitalization process to describe a subalgebra encoding hom-associative structures given by multiplication by central elements.

        \cref{sec-basicnotions} introduces hom-associative and unital algebras, together with the notations and conventions assumed across this article. \cref{sec:unital-1SU} studies properties of the twisting map that are extracted from the unitality axiom, including that it must be a multiplication operator; \cref{ssec:association-relations} studies the relation between associator and hom-associator of these algebras and establishes rules for invariance of relative nuclei in terms of the unity (\cref{prop:AssociatorsHomAssociators}); in \cref{ssec:multiplicativity} we show that any appropriate (or \textit{compatible}) twisting map is multiplicative if and only if the multiplier (or \textit{hom-unity}) is idempotent (\cref{thm:unital_multiplicativity}). In two-sided unital algebras, Fr\'{e}gier and Gohr \cite{FreGohr:UnitCond} prove that the multipliers of all compatible maps form a subalgebra; in \cref{sec:hom-unities-2SU}, we compute it explicitly in terms of the center, nucleus, and annihilators of the algebra (see \cref{thm:AC_two_sided_unital}). We illustrate this by computing the hom-associative structures in the first Cayley-Dickson algebras. \cref{sec:hom-unities-1SU} explores characterization of hom-associative structures in one-sided unital algebras. We describe the multipliers of compatible maps as sum of an element commuting with the unity and a global zero divisor (\cref{thm:ACL_split}). We prove that the subspace of all multipliers contains a subalgebra (\cref{thm:ACL1_subalgebra}) consisting of those multipliers commuting with unity. This subalgebra is then shown to be in bijection with all compatible maps, and a subset is described, consisting of the idempotents of this subalgebra, which encodes those compatible maps that are also multiplicative (\cref{thm:bijection_left_unital}). The relations between hom-unities are expressed in terms of commutation, association, and zero division (see Lemma \ref {lem:ACL1_properties_A}) and are used to prove that there are no unital hom-associative domains or division algebras (see Theorem \ref {thm:no_domains_L}). In \cref{ssec:comparing}, we examine this construction in terms of the two-sided version by Fr\'{e}gier and Gohr \cite{FreGohr:UnitCond} and explore hom-associative structures in one-sided unital hom-Leibniz algebras. In particular, it turns out that in the presence of hom-associativity, a unity on one side is incompatible with the hom-Leibniz identity on the same side, but compatible with that on the other side (\cref{prop:LeibnizHomCrossed}).  In \cref{sec:non-unital}, we describe different subspaces of hom-unities for non-unital algebras (\cref{prop:NU_subsets}) and show that, under certain zero division relations, associative algebras are hom-associative with a twisting map given by multiplication by central elements (\cref{prop:HUnA_assoc_no_ann}). In \cref{ssec:unitalizations}, we use a standard unitalization process to describe a subspace of two-sided hom-unities in any non-unital algebras. We illustrate this idea on Leibniz algebras and find a subspace of hom-unities hidden within the nilpotents of the algebra (\cref{prop:LeibHomUnities}); furthermore, we examine non-unital hom-Leibniz algebras, where we prove that a certain twist yields a hom-Lie algebra (\cref{prop:HLeibYauTwist}).
		
		\section{Basic notions, notations and conventions} \label{sec-basicnotions}
		An algebra over a field $\mathbb{F}$  is a linear space $A$ over $\mathbb{F}$ with a bilinear multiplication operation called a product or multiplication.
        The notations for the products, such as $\mu\colon A\times A\rightarrow A$, $(x,y)\mapsto x\cdot y$, are often simplified to juxtaposition $xy$ when no confusion arises. The algebra is \textit{associative} if $x\cdot (y\cdot z)=(x\cdot y)\cdot z$ for all $x,y,z\in A$, and it is \textit{commutative} if $x\cdot y=y\cdot x$ for all $x,y\in A$.
		If no confusion is possible, we identify the algebra with the associated linear space $A$. An element $e\in A$ is \textit{idempotent} if $e\cdot e=e$. Let $ E (A) $ denote the set of all idempotents is denoted $E(A)$.
		Commuting elements in noncommutative algebras are studied in terms of commutators.
		The commutator is the bilinear operator on an algebra $A$
		defined by $[x,y]_{-}=x\cdot y-y\cdot x$. Algebras where the commutator is always $0$ are \textit{commutative}.
		The zeros of the commutator indicate how wide the commutativity relation is in an algebra or a certain subset of it: the \textit{center} $Z(A)=\{a \mid [a,x]_{-}=0  \  \forall x\in A\}$ of an algebra $A$ is the subspace of elements of $A$ that commute with all its elements, and generally, the \textit{centralizer} of a subset $S\subseteq A$ is the subspace of elements of $A$ that commute with all elements of $S$, that is, $[a,s]_{-}=0$ for all $s\in S$.

        In any algebra, the commutator map $[,]_{-}:A\times A\rightarrow A$ is bilinear and skew-symmetric, $[x,y]_{-}=-[x,y]_{-}$, and hence the underlying linear space of any algebra is a skew-symmetric algebra with the commutator map as the new product.
        In general, for non-associative algebras, it is a rare and highly restrictive condition for the commutator operation to be associative on the entire algebra.
         For example, in any Lie algebra $(A,[\cdot,\cdot])$, by skew-symmetry of $[\cdot,\cdot]$, the Jacobi identity can be rewritten as $[[x,y],z]-[x,[y,z]]=[[x,z],y]$, implying thus that $(A,[\cdot,\cdot])$ is associative if and only if $[A,A]\subseteq Ann_A(A)=\{a\mid [a,x]=[x,a]=0 \ \forall x\in A\}$, that is, if and only if $(A,[\cdot,\cdot])$ is a nilpotent algebra of degree $3$ meaning that the products of any three elements are zero, since
         $[A,A]\subseteq Ann_A(A)\Leftrightarrow [[A,A],A]=0,\ [A,[A,A]]=0$ by the skew-symmetry of $[\cdot,\cdot]$. This shows that, for Lie algebras, as a class of not necessarily associative algebras, associativity is a rare and highly restrictive condition. The annihilators of a subset $S\subseteq A$ are the subspaces $Ann_A^l(S)=\{a\in A\mid a\cdot s=0\ \forall s\in S\}$ (left annihilator) and $Ann_A^r(S)=\{a\in A\mid s\cdot a=0\ \forall s\in S\}$ (right annihilator), and the subspace $Ann_A(A)$ is known as the annihilator of $A$. In any skew-symmetric algebra $(A,[\cdot,\cdot])$, $[x,y]_{-}=[x,y]-[y,x]=[x,y]+[x,y]=2[x,y]$ by skew-symmetry, and hence $[x,y]_{-}=0 \Leftrightarrow [x,y]=0$, that is, $Ann_A(A)=Z(A)$ in any skew-symmetric algebra over a field of characteristic different from $2$.
         The commutator is often considered the simplest example of a bilinear product that is not necessarily associative in associative algebras. The commutator product on any associative algebra $(A,\cdot)$ not only satisfies skew-symmetry but also  obeys the Jacobi identity $[x,[y,z]_{-}]_{-}+[y,[z,x]_{-}]_{-}+[z,[x,y]_{-}]_{-}=0$ for all $x,y,z\in A$, yielding thus a Lie algebra
        $(A,[\cdot,\cdot]_{-})$. This Lie algebra is associative if and only if it is nilpotent of degree $3$, meaning that
        $[[A,A]_{-},A]_{-}=0,\ [A_{-},[A,A]_{-}]_{-}=0$, which is equivalent to
        $[x,[y,z]_{-}]_{-}=x\cdot y \cdot z-y\cdot x\cdot z-z\cdot x\cdot y+z\cdot y\cdot x=0$ for all $x,y,z\in A$. This identity holds for all commutative associative algebras, but is clearly quite restrictive for non-commutative associative algebras, singling out a special subclass of non-commutative associative algebras that satisfy this special identity.

        The associator on an algebra $A$ is a trilinear operator defined by $[x,y,z]_{\mathrm{as}}=(x\cdot y)\cdot z-x\cdot (y\cdot z)$.
		The associator can be seen as the commutator of multiplication operators $L_x$ and $R_z$ in the following sense:
		\begin{multline*}
			[x,y,z]_{\mathrm{as}}=(x\cdot y)\cdot z-x\cdot (y\cdot z)\\
			=R_z(x\cdot y)-L_x(y\cdot z)=R_z(L_x(y))-L_x(R_z(y))=[R_z,L_x]_{-}(y).
		\end{multline*}
		The zeroes of the associator correspond to associating elements of the algebra.
		\begin{definition}
			The \textit{relative nucleus} of $S\subseteq A$ is the subspace of  $A$ consisting of elements that associate with all elements in $S$, that is,
			\begin{align*}
				N_A(S) &=\{a\in A \mid [a,x,y]_{\mathrm{as}}=[x,a,y]_{\mathrm{as}}=[x,y,a]_{\mathrm{as}}=0\ \forall x,y\in S\} \\
				&= \{a\in A \mid [a,S,S]_{\mathrm{as}}= [S,a,S]_{\mathrm{as}}=[S,S,a]_{\mathrm{as}}=0\}.
			\end{align*}
			
			The left, right and middle relative nuclei of $S\subseteq A$ are the subspaces of $A$ consisting of elements that associate with all $S$ in the corresponding position:
			\begin{align*}
				N_A^l(S) &=\{a\in A \mid \left[a,s,t\right]=0\ \forall s,t\in S\} \\
				N_A^m(S) &=\{a\in A \mid \left[s,a,t\right]=0\ \forall s,t\in S\} \\
				N_A^r(S) &=\{a\in A \mid \left[s,t,a\right]=0\ \forall s,t\in S\}
			\end{align*}
			If $S=A$, we denote them $N(A), N^l(A), N^m(A)$ and $N^r(A)$.
		\end{definition}
		Hom-algebras appear as natural deformations of usual algebras by the action of linear maps. A hom-algebra is an algebra equipped with a linear map $\alpha\colon A\rightarrow A$ that deforms the fundamental identities of the algebra.

		\begin{definition}[\!\!{\cite[Definition 1.1]{ms:homstructure}}]
			A hom-associative algebra is a triple  $(A,\mu,\alpha)$ where $A$ is a linear space, $\mu\colon A\times A\rightarrow A$ is a bilinear product operation denoted by dot or juxtaposition, and $\alpha\colon A\rightarrow A$ is a linear map such that
			\[
			\alpha(x)\cdot(y\cdot z)=(x\cdot y)\cdot \alpha(z)
			\]
			for all $x,y,z\in A$. If no confusion is possible, we identify a hom-algebra with the pair $(A,\alpha)$. The twisting map $\alpha$ is called trivial if $\alpha=\lambda \mathrm{id}_A,\ \lambda\in\mathbb{F}$ and non-trivial otherwise.
		\end{definition}
		
		A hom-associative algebra is \textit{multiplicative} if  $\alpha(x\cdot y)=\alpha(x)\cdot \alpha(y)$ for all $x,y\in A$. A hom-associative algebra where $\alpha=\mathrm{id}_A$ is associative. A common way to obtain hom-associative algebras is to twist an associative algebra by a homomorphism \cite[Theorem 2.3]{Yau:HAhomology}. If $(A,\mu)$ is associative, then $(A,\alpha\circ\mu,\alpha)$ is hom-associative.
		\begin{example}
			Consider $\mathbb{C}$ as a linear space over $\mathbb{R}$, $x\ast y=\overline{x\cdot y}$ and $\alpha\colon x\mapsto \Bar{x}$, where $\Bar{x}$ is the complex conjugate of $x$. The triple $(\mathbb{C},\ast',\alpha)$ is a hom-associative algebra. Indeed, for all $x,y,z\in\mathbb{C},$
			\begin{align*}
				\alpha(x)\ast'(y\ast' z) &=\overline{\alpha(x)\cdot(y\ast' z)} =\overline{\Bar{x}\cdot \overline{{y\cdot z}}}=x\cdot y\cdot z
				\\
				& =\overline{\overline{x\cdot y}\cdot{\Bar{z}}}  =\overline{(x\ast' y)\cdot\alpha(z)}=(x\ast' y)\ast'\alpha(z).
			\end{align*}
		\end{example}
		
		This procedure is compatible with more general algebras and linear maps: the hom-associator of a twisted algebra $(A,\alpha\circ\mu,\alpha)$ can be expressed in terms of the original product as $[x,y,z]_{\mathrm{as}}^\alpha=\alpha(\alpha(x\cdot y)\cdot \alpha(z)-\alpha(x)\cdot \alpha(y\cdot z))$.
		\begin{theorem}
			Let $(A,\mu)$ be an algebra and $\alpha\colon A\rightarrow A$ be a linear map. Then $(A,\alpha\circ\mu,\alpha)$ is hom-associative if and only if, for all $x,y,z\in A$, $\alpha(\alpha(x\cdot y)\cdot \alpha(z)-\alpha(x)\cdot \alpha(y\cdot z))=0$.
		\end{theorem}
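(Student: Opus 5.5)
The statement is a direct unfolding of the definition of hom-associativity for the twisted product, so the plan is to compute the hom-associator of $(A,\alpha\circ\mu,\alpha)$ explicitly in terms of the original product $\mu$ and compare it with the expression in the statement.

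Write $x\ast y=\alpha(x\cdot y)$ for the twisted product. By the definition of hom-associative algebra, $(A,\ast,\alpha)$ is hom-associative if and only if
\[
(x\ast y)\ast\alpha(z)-\alpha(x)\ast(y\ast z)=0
\]
for all $x,y,z\in A$. I will expand each term using only the definition of $\ast$. First,
\[
(x\ast y)\ast\alpha(z)=\alpha\bigl((x\ast y)\cdot\alpha(z)\bigr)=\alpha\bigl(\alpha(x\cdot y)\cdot\alpha(z)\bigr).
\]
Second,
\[
\alpha(x)\ast(y\ast z)=\alpha\bigl(\alpha(x)\cdot(y\ast z)\bigr)=\alpha\bigl(\alpha(x)\cdot\alpha(y\cdot z)\bigr).
\]
By linearity of $\alpha$, their difference equals
\[
\alpha\bigl(\alpha(x\cdot y)\cdot\alpha(z)-\alpha(x)\cdot\alpha(y\cdot z)\bigr),
\]
which is exactly the expression $[x,y,z]^{\alpha}_{\mathrm{as}}$ recorded just before the theorem.

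Hence the hom-associativity identity for $(A,\alpha\circ\mu,\alpha)$ holds for all $x,y,z$ if and only if this expression vanishes for all $x,y,z$, which gives both directions simultaneously. No step is a genuine obstacle. The only point requiring care is keeping track of which argument receives the twist $\alpha$: in the hom-associativity axiom the outer factors are twisted, while the twisted product applies $\alpha$ after multiplying. The linearity of $\alpha$ is the only property used; no multiplicativity or associativity of $\mu$ is needed.

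As a sanity check, I will note that if $\mu$ is associative and $\alpha$ is multiplicative, the inner expression becomes
\[
\alpha(x)\alpha(y)\alpha(z)-\alpha(x)\alpha(y)\alpha(z)=0,
\]
which recovers the twisting construction of \cite[Theorem 2.3]{Yau:HAhomology} mentioned above.
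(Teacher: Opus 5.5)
Your proposal is correct and matches the paper's approach: the paper gives no separate proof, only the remark before the theorem that the hom-associator of $(A,\alpha\circ\mu,\alpha)$ expands to $\alpha(\alpha(x\cdot y)\cdot\alpha(z)-\alpha(x)\cdot\alpha(y\cdot z))$. Your direct expansion of the twisted product, which uses only the linearity of $\alpha$, is exactly that computation.
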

		
		\begin{example}
			Let $(\mathbb{C}[t],\mu)$ denote complex polynomials in $t$ with the usual product and the multiplication operator $L_t\colon p(t)\rightarrow tp(t)$. For any three generators $t^n,\ t^m,\ t^l$, the hom-associator with respect to $L_t$ is
			\begin{multline*}
				L_t(L_t(t^n\cdot t^m)\cdot L_t(t^l)-L_t(t^n)L_t(t^m\cdot t^l))=L_t(t^{n+m+1}\cdot t^{l+1}-t^{n+1}\cdot t^{m+l+1})\\
				=L_t(t^{n+m+l+2}-t^{n+m+l+2})=0.
			\end{multline*}
			Thus, $(\mathbb{C}[t],L_t\circ\mu,L_t)$ is hom-associative.
		\end{example}
		
		The hom-associator is a natural extension of the associator to hom-algebras, twisting or deforming associators by a twisting map.

		\begin{definition}[\!\!{\cite[Definition 2.5]{ms:homstructure}}]
			The hom-associator $[\cdot,\cdot,\cdot]_{\mathrm{as}}^\alpha\colon A\times A\times A\rightarrow A$ is the trilinear operator defined by $[x,y,z]_{\mathrm{as}}^\alpha=(x\cdot y)\cdot \alpha(z)-\alpha(x)\cdot (y\cdot z)$.
		\end{definition}
		
		Similar to the associator in associative algebras, an algebra is hom-associative if and only if the corresponding \textit{hom-associator} is always zero.
		
		A unity of a (hom-)algebra $(A,\alpha)$ is an element $1_\star\in A$ such that, for all $x\in A$, $1_\star\cdot x=x$ (left unity), or $x\cdot 1_\star=x$ (right unity), or both (two-sided unity). A hom-algebra equipped with unity is known as unital and denoted $(A,\alpha,1_\star)$.
		
		Two-sided unital hom-associative algebras are very close to being associative; it suffices that the twisting map is injective or surjective \cite{FreGohrSil:UnitalSurj}. In particular, the quotient algebra $A/\ker\alpha$ is associative. Therefore, it is necessary to introduce a generalized notion of unity for hom-algebras.
		
		\begin{definition}
			A hom-unity or weak unity of a hom-algebra $(A,\alpha)$ is an element $\omega_\star\in A$ such that, for all $x\in A$, $\omega_\star\cdot x=\alpha(x)$ (left hom-unity), $x\cdot \omega_\star=\alpha(x)$ (right hom-unity), or both (two-sided hom-unity). A hom-algebra with a hom-unity is known as hom-unital or weakly unital and is denoted $(A,\alpha,\omega_\star)$.
			A hom-algebra being left or right hom-unital is equivalent to $\alpha=L_{\omega_\star}$ or  $\alpha=R_{\omega_\star}$ respectively.
		\end{definition}
		\begin{example}[\!\!\cite{FreGohr:UnitCond}]
			The twist $(A,\alpha\circ\mu,\alpha,1_\star)$ of a two-sided unital algebra $(A,\mu,1_\star)$ by a homomorphism $\alpha$ is two-sided hom-unital. Indeed, for all $x\in A$,
			\begin{equation*}
				(\alpha\circ\mu)(1_\star, x)=\alpha(1_\star\cdot x)=\alpha(x)=\alpha(x\cdot 1_\star)=(\alpha\circ\mu)(x,1_\star).
			\end{equation*}
		\end{example}
		
		In this article, $L_a$ and $R_a$ denote, respectively, left and right multiplication by $a\in A$.  The naming convention  for these algebras is presented in Table \ref{tab:naming_convention}.
		
		\begin{table}[H]
			\makebox[\textwidth][c]{%
				\begin{minipage}{13cm} 
					\caption{
						Naming convention for (hom-)unital (hom-)algebras. The term \textit{(hom-)unital} refers to any of these algebras.
					}
					\label{tab:naming_convention}
					\centering
					\begin{tabular}{|c|c|c|c|}
						\hline
						\textbf{With...} & \textbf{On side...} & \textbf{Is known as...} & \textbf{ (hom-)unity...}\\
						\hline
						Unity & Left & Left-unital & $1_l$\\
						& Right & Right-unital & $1_r$\\
						& Both & Two-sided unital & $1_{lr}$\\
						& Unspecified & One-sided unital & $1_\star$\\
						\hline
						Hom-unity & Left & Left hom-unital & $\omega_l$\\
						& Right & Right hom-unital & $\omega_r$\\
						& Both & Two-sided hom-unital & $\omega_{lr}$\\
						& Unspecified & One-sided hom-unital & $\omega_\star$\\
						\hline
					\end{tabular}
				\end{minipage}
			}
		\end{table}
		
		\begin{definition}
			A (one-sided) hom-ideal of a hom-algebra $(A,\alpha)$ is a (one-sided) ideal $I\subseteq A$ such that $\alpha(I)\subseteq I$. A hom-algebra is (one-sided) hom-simple if it does not contain any nontrivial (one-sided) hom-ideals.
		\end{definition}

		An essential aspect of the structure of algebras and hom-algebras is the presence of nonzero elements that multiply to $0$.
		
		\begin{definition}
			For any subset $S\subseteq A$ of an algebra, the left annihilator of $S$ is the subspace of elements of $A$ that annihilate every element in $S$ under left multiplication,
			\begin{align}
				Ann_A^l(S)=\{a\in A\mid \forall x\in S,\ a\cdot x=0\}
				=\bigcap\limits_{x\in S}\ker(R_x).
			\end{align}
			The right annihilator of $S$ is the subspace of elements of $A$ that annihilate every element in $S$ under right multiplication,
			\begin{align}
				Ann_A^r(S)=\{a\in A\mid \forall x\in S,\ x\cdot a=0\}=\bigcap\limits_{x\in S}\ker(L_x).
			\end{align}
		\end{definition}
		An element $a\in A$ is \textit{left-regular} if $a\cdot x\neq 0 \ \forall x\in A\backslash\{0\}$, \textit{right-regular} if $x\cdot a\neq 0 \ \forall x\in A\backslash\{0\}$ and \textit{two-sided regular} if both hold. An associative algebra in which all nonzero elements are two-sided regular is an \textit{associative domain}, and one in which all nonzero elements are invertible is a \textit{division algebra}. A non-associative algebra in which all multiplication operators are injective is a \textit{non-associative domain}, and one in which all multiplication operators are bijective is a \textit{non-associative division algebra}.
		
		\section{One-sided unital hom-associative algebras}\label{sec:unital-1SU}
		From this section onward, we mainly prove results for left-unital algebras, as the right-unital counterparts are analogous, and easily obtained by considering the opposite algebra $A^{\mathrm{op}}=(A,\mu_{\mathrm{op}},1_r)$ of a given right-unital algebra $A=(A,\mu,1_r)$. The opposite product is given by $\mu_{\mathrm{op}}(x,y)=y\cdot x$. The opposite algebra $(A^{\mathrm{op}},\mu,1_r)$ is left-unital if and only if $(A,1_r)$ is right-unital. It then suffices to study the left-unital algebras.
		
		\begin{remark}
			Observe that a left unital hom-associative algebra $(A,\alpha,1_l)$ is hom-unital with hom-unity  $\alpha(1_l)$ -- indeed, by hom-associativity, we have
			\begin{equation*}
				\alpha(x)=(1_l\cdot 1_l)\cdot\alpha(x)=\alpha(1_l)\cdot(1_l\cdot x)=\alpha(1_l)\cdot x,
			\end{equation*}
			which is the hom-unitality axiom.
		\end{remark}
		
		\begin{lemma}\label{lem:unit-fundamental}
			Let $(A,\alpha,1_l)$ be a left-unital hom-associative algebra. Then, for all $x,y,z\in A$,
			\begin{multicols}{2}\raggedcolumns
				\begin{enumerate}[label=\textnormal{(\roman*)},ref=\textnormal{(\roman*)}]
					\item $\alpha(x)\cdot z=(x\cdot  1_l)\cdot \alpha(z)$,
					\item $\alpha(x)\cdot  x=(x\cdot  1_l)\cdot \alpha(x)$,
					\item $\alpha(x)\cdot 1_l=(x\cdot  1_l)\cdot \alpha(1_l)$,
					\item $\alpha(1_l)\cdot  x=\alpha(x)$,
					\item $1_l\cdot \alpha(1_l)=\alpha(1_l)\cdot  1_l$,
					\item $\alpha(x\cdot  y)=x\cdot \alpha(y)$,
					\item $\alpha=L_{\alpha(1_l)}$.
				\end{enumerate}
			\end{multicols}
			Let $(A,\alpha,1_r)$ be a right-unital hom-associative algebra. Then, for all $x,y,z\in A$,
			\begin{multicols}{2}\raggedcolumns
				\begin{enumerate}[label=\textnormal{(\roman*)},ref=\textnormal{(\roman*)}]
					\setcounter{enumi}{7}
					\item $x\cdot \alpha(z)=\alpha(x)\cdot (1_r\cdot  z)$,
					\item $x\cdot \alpha(x)=\alpha(x)\cdot (1_r\cdot  x)$,
					\item $1_r\cdot \alpha(x)=\alpha(1_r)\cdot (1_r\cdot  x)$,
					\item $x\cdot \alpha(1_r)=\alpha(x)$,
					\item $\alpha(1_r)\cdot  1_r=1_r\cdot \alpha(1_r)$,
					\item $\alpha(x\cdot  y)=\alpha(x)\cdot  y$,
					\item $\alpha=R_{\alpha(1_r)}$.
				\end{enumerate}
			\end{multicols}
		\end{lemma}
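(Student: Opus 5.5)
The left-unital items all come from specializing the hom-associativity identity $\alpha(x)\cdot(y\cdot z)=(x\cdot y)\cdot\alpha(z)$ to arguments equal to $1_l$; the right-unital items (viii)–(xiv) then follow by passing to the opposite algebra, as explained at the start of this section. I would prove the first list in the order below, since several items feed into later ones.

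For (i), I set $y=1_l$ in hom-associativity. Since $1_l\cdot z=z$, the left side is $\alpha(x)\cdot(1_l\cdot z)=\alpha(x)\cdot z$, and the right side is $(x\cdot 1_l)\cdot\alpha(z)$. Items (ii) and (iii) are (i) with $z=x$ and with $z=1_l$, respectively. For (iv), I put $x=y=1_l$, so that $1_l\cdot 1_l=1_l$ gives $\alpha(1_l)\cdot z=1_l\cdot\alpha(z)=\alpha(z)$; this is the computation in the preceding Remark. Item (vii) is just (iv) restated as $\alpha=L_{\alpha(1_l)}$.

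For (v), the left side is $1_l\cdot\alpha(1_l)=\alpha(1_l)$ by left unitality. For the right side, (iv) with $x=1_l$ gives $\alpha(1_l)\cdot 1_l=\alpha(1_l)$, so the two sides agree.

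For (vi), I take $x=1_l$ in hom-associativity to get $\alpha(1_l)\cdot(y\cdot z)=(1_l\cdot y)\cdot\alpha(z)=y\cdot\alpha(z)$. By (iv) the left side is $\alpha(y\cdot z)$, which yields $\alpha(y\cdot z)=y\cdot\alpha(z)$; renaming variables gives (vi).

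For the right-unital statements, a right unity of $A$ is a left unity of $A^{\mathrm{op}}$. The hom-associativity identity is symmetric under passing to the opposite product: the left and right sides swap roles and reverse order. Hence $(A^{\mathrm{op}},\alpha,1_r)$ is left-unital hom-associative, and translating (i)–(vii) back through $\mu_{\mathrm{op}}$ gives (viii)–(xiv). Alternatively, each item can be checked directly: $y=1_r$ gives (viii), $y=z=1_r$ gives (xi), and $z=1_r$ combined with (xi) gives (xiii).

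There is no genuine obstacle here. The only care needed is in (vi), where (iv) must already be available, and in checking that the opposite-algebra translation preserves hom-associativity, which is a one-line verification.
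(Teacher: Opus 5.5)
Your proof is correct and follows the paper's argument: you obtain (i)--(vii) by substituting $1_l$ into the hom-associativity identity just as the paper does, and you use (iv) to get (v) and (vi). The only cosmetic difference is that the paper checks (viii)--(xiv) by the analogous direct substitutions with $1_r$ rather than by passing to $A^{\mathrm{op}}$, and you also sketch that direct route as an alternative.
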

		\begin{proof}
			We apply the hom-associativity and one-sided unitality conditions:
			\begin{enumerate}[label=\textnormal{(\roman*)}$\colon$,ref=\textnormal{(\roman*)}]
				\item $(x\cdot 1_l)\cdot \alpha(z)=\alpha(x)\cdot (1_l\cdot z)=\alpha(x)\cdot z$,
				\item $(x\cdot 1_l)\cdot \alpha(x)=\alpha(x)\cdot (1_l\cdot x)=\alpha(x)\cdot x$,
				\item $(x\cdot 1_l)\cdot \alpha(1_l)=\alpha(x)\cdot (1_l\cdot 1_l)=\alpha(x)\cdot 1_l$,
				\item $(1_l\cdot 1_l)\cdot \alpha(x)=\alpha(1_l)\cdot (1_l\cdot x)=\alpha(1_l)\cdot x$,
				\item $\alpha(1_l)\cdot 1_l=\alpha(1_l)=1_l\cdot \alpha(1_l)$,
				\item $\alpha(x\cdot y)=\alpha(1_l)\cdot (x\cdot y)=(1_l\cdot x)\cdot \alpha(y)=x\cdot \alpha(y)$,
				\item $\alpha(x)=\alpha(1_l)\cdot x\ \forall x\in A \Rightarrow \alpha=L_{\alpha(1_l)}$,
				\item $\alpha(x)\cdot (1_r\cdot z)=(x\cdot 1_r)\cdot \alpha(z)=x\cdot \alpha(z)$,
				\item $\alpha(x)\cdot (1_r\cdot x)=(x\cdot 1_r)\cdot \alpha(x)=x\cdot \alpha(x)$,
				\item $\alpha(1_r)\cdot (1_r\cdot x)=(1_r\cdot 1_r)\cdot \alpha(x)=1_r\cdot \alpha(x)$,
				\item $\alpha(x)\cdot (1_r\cdot 1_r)=(x\cdot 1_r)\cdot \alpha(1_r)=x\cdot \alpha(1_r)$,
				\item $1_r\cdot \alpha(1_r)=\alpha(1_r)=\alpha(1_r)\cdot 1_r$,
				\item $\alpha(x\cdot y)=(x\cdot y)\alpha(1_r)=\alpha(x)\cdot (y\cdot 1_r)=\alpha(x)\cdot y$,
				\item $\alpha(x)=x\cdot \alpha(1_r)\ \forall x\in A \Rightarrow \alpha=R_{\alpha(1_r)}$.
			\end{enumerate}
		\end{proof}
		We represent these relations in a table for better comparison.
		\begin{table}[H]
			\makebox[\textwidth][c]{%
				\begin{minipage}{13cm} 
					\caption{Product relations in one-sided unital hom-associative algebras.}
					\label{tab:product_relations_one}
					\centering
					\begin{tabular}{|c|c|}
						\hline
						\textbf{Left-unital} &  \textbf{Right-unital}   \\
						\hline
						$\alpha(x)\cdot y=(x\cdot 1_l)\cdot \alpha(y)$ &  $x\cdot \alpha(y)=\alpha(x)\cdot (1_r\cdot y)$ \\
						$\alpha(x\cdot y)=x\cdot \alpha(y)$ &  $\alpha(x\cdot y)=\alpha(x)\cdot y$ \\
						$\alpha(x)\cdot 1_l=(x\cdot 1_l)\cdot \alpha(1_l)$ &   $1_r\cdot \alpha(x)=\alpha(1_r)\cdot (1_r\cdot x)$ \\
						$\alpha(x)=\alpha(1_l)\cdot x$ &  $\alpha(x)=x\cdot \alpha(1_r)$ \\
						$\alpha=L_{\alpha(1_l)}$ &  $\alpha=R_{\alpha(1_r)}$ \\
						$x\cdot y=1_l\Rightarrow  x\cdot \alpha(y)=\alpha(1_l)$ &  $x\cdot y=1_r\Rightarrow \alpha(x)\cdot y=1_r$ \\
						\hline
						$1_l\cdot \alpha(1_l)=\alpha(1_l)=\alpha(1_l)\cdot 1_l$ &  $\alpha(1_r)\cdot 1_r=\alpha(1_r)=1_r\cdot \alpha(1_r)$ \\
						\hline
					\end{tabular}
				\end{minipage}
			}
		\end{table}
		
		The two-sided unital analogs of the relations in \cref{tab:product_relations_one} are the opening result of \cite{FreGohr:UnitCond} and are naturally a combination of the two prior. Two-sided unitality also ensures commu\-tation between any element and its image by $\alpha$.
		
		\begin{table}[H]
			\makebox[\textwidth][c]{%
				\begin{minipage}{7.8cm} 
					\caption{Product relations in two-sided \\ unital hom-associative algebras.}
					\label{tab:product_relations_two}
					\centering
					\begin{tabular}{|c|}
						\hline
						\textbf{Two-sided unital}   \\
						\hline
						$x\cdot \alpha(y)=\alpha(x)\cdot y$ \\
						$x\cdot\alpha(y)=\alpha(x\cdot y)=\alpha(x)\cdot y$ \\
						$\alpha(1_{lr})\cdot x=\alpha(x)=x\cdot\alpha(1_{lr})$   \\
						$\alpha(1_{lr})\cdot x=\alpha(x)=x\cdot\alpha(1_{lr})$  \\
						$L_{\alpha(1_{lr})}=\alpha=R_{\alpha(1_{lr})}$  \\
						$x\cdot y=1_{lr}\Rightarrow  x\cdot\alpha(y)=\alpha(1_{lr})=\alpha(x)\cdot y$  \\
						\hline
						$x\cdot \alpha(x)=\alpha(x^2)=\alpha(x)\cdot x$  \\
						\hline
					\end{tabular}
				\end{minipage}
			}
		\end{table}
		
		The twisting map of any unital hom-associative algebra is a multiplication operator, but multiplication operators need not yield unital hom-associative algebras.
		
		\begin{example}\label{ex:counter-poly}
			Consider the ring $t\cdot \mathbb{C}[t]=\{p(t)\in \mathbb{C}[t]\mid p(0)=0\}$ of polynomials in $t$ with independent term equal to $0$ and the usual product. The multiplication operator $L_t\colon p(t)\mapsto t\cdot p(t)$ defines a hom-associative algebra, since by the multi-linearity of the hom-associator, the hom-associativity holds for any elements since it holds for the basis $\{t^n\mid n\ge 1\}$ of $t\cdot \mathbb{C}[t]$:
			\begin{multline*}
				[t^n,t^m,t^l]_{\mathrm{as}}^{L_t}=(t^n\cdot t^m)\cdot L_t(t^l)-L_t(t^n)\cdot (t^m\cdot t^l)\\=(t^n\cdot t^m)\cdot t^{l+1}-t^{n+1}\cdot (t^m\cdot t^l)=t^{n+m+l+1}-t^{n+1+m+l}=0.
			\end{multline*}
			As a hom-algebra, $(t\mathbb{C}[t],L_t)$ is hom-associative but not unital.
		\end{example}
		
		\begin{remark}
			Note that the hom-algebra in \cref{ex:counter-poly} is also associative. In the sequel (see \cref{thm:bijection_left_unital}\ref{item:HAstr_B4}), we show that hom-associativity is due to the centrality of $t$ in $\mathbb{C}[t,t^{-1}]$.
		\end{remark}
		
		\subsection{Association relations}\label{ssec:association-relations}
		
		One-sided unities need not be unique. Indeed, for any two different one-sided unities $1_l$ and $1_l'$ we have $1_l\cdot x=x=1_l'\cdot x$, and thus $(1_l'-1_l)\cdot x=0$, that is, $1_l'-1_l$ is a universal left zero divisor of $A$ or $1_l-1_l'\in Ann_A^l(A)$. Therefore, any unity in a domain is unique. The element $1_l'-1_l$ is also nilpotent with order $2$, as $(1_l'-1_l)\cdot (1_l'-1_l)=1_l'\cdot 1_l'-1_l'\cdot 1_l-1_l\cdot 1_l'+1_l\cdot 1_l=1_l'-1_l-1_l'+1_l=0$. If there are left and right unities, then the unity is two-sided and unique: for any two one-sided unities $1_l$ and $1_r$, $1_l=1_l1_r=1_r$.

		The twisting map $\alpha$ of a unital hom-associative algebra $(A,\alpha,1_\star)$ is multiplication by $\alpha(1_\star)$ (see \cref{lem:unit-fundamental}). This property applied to the hom-associators provides associativity conditions that extend those in \cite[Proposition 1.1]{FreGohr:UnitCond}.
		\begin{proposition}\label{prop:AssociatorsHomAssociators}
			Let $(A,\alpha,1_l)$ be a left-unital hom-associative algebra. Then, for all $x,y,z\in A$,
			\begin{equation*}
				[x,y,\alpha(z)]_{\mathrm{as}}=\alpha([x,y,z]_{\mathrm{as}}).
			\end{equation*}
			Moreover, if $\alpha$ is injective, then, for all $x,y,z\in A$,
			\begin{equation*}
				[x,y,\alpha(z)]_{\mathrm{as}}=0\Leftrightarrow [x,y,z]_{\mathrm{as}}=0, \textnormal{ that is, }\alpha(z)\in N^r(A)\Leftrightarrow z\in N^r(A)
			\end{equation*}
			Let $(A,\alpha,1_r)$ be a right-unital hom-associative algebra. Then, for all $x,y,z\in A$,
			\begin{equation*}
				[\alpha(x),y,z]_{\mathrm{as}}=\alpha([x,y,z]_{\mathrm{as}}).
			\end{equation*}
			Moreover, if $\alpha$ is injective, then, for all $x,y,z\in A$,
			\begin{equation*}
				[\alpha(x),y,z]_{\mathrm{as}}=0\Leftrightarrow [x,y,z]_{\mathrm{as}}=0, \textnormal{ that is, }\alpha(x)\in N^l(A)\Leftrightarrow x\in N^l(A)
			\end{equation*}
		\end{proposition}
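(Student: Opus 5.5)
Our plan is to expand both sides of the identity with \cref{lem:unit-fundamental} and compare term by term. We treat the left-unital case and get the right-unital case from the opposite algebra. The two facts we need are \ref{lem:unit-fundamental}(vi), $\alpha(x\cdot y)=x\cdot\alpha(y)$ for all $x,y$, and hom-associativity itself, $\alpha(x)\cdot(y\cdot z)=(x\cdot y)\cdot\alpha(z)$.

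For the left-unital identity, expand
\[
\alpha([x,y,z]_{\mathrm{as}})=\alpha((x\cdot y)\cdot z)-\alpha(x\cdot(y\cdot z)).
\]
By (vi) applied to the pair $(x\cdot y, z)$, the first term is $(x\cdot y)\cdot\alpha(z)$. Applying (vi) twice to the second term gives $\alpha(x\cdot(y\cdot z))=x\cdot\alpha(y\cdot z)=x\cdot(y\cdot\alpha(z))$. Therefore
\[
\alpha([x,y,z]_{\mathrm{as}})=(x\cdot y)\cdot\alpha(z)-x\cdot(y\cdot\alpha(z))=[x,y,\alpha(z)]_{\mathrm{as}}.
\]
This step only uses (vi), so hom-associativity enters only through that lemma.

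If $\alpha$ is injective, then $[x,y,\alpha(z)]_{\mathrm{as}}=0$ if and only if $\alpha([x,y,z]_{\mathrm{as}})=0$, which holds if and only if $[x,y,z]_{\mathrm{as}}=0$. Fixing $z$ and quantifying over all $x,y\in A$ gives $\alpha(z)\in N^r(A)\Leftrightarrow z\in N^r(A)$.

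For the right-unital case we use (xiii), $\alpha(x\cdot y)=\alpha(x)\cdot y$, in the same way. It gives $\alpha(x\cdot(y\cdot z))=\alpha(x)\cdot(y\cdot z)$. Applying it twice gives $\alpha((x\cdot y)\cdot z)=\alpha(x\cdot y)\cdot z=(\alpha(x)\cdot y)\cdot z$. Subtracting yields $\alpha([x,y,z]_{\mathrm{as}})=[\alpha(x),y,z]_{\mathrm{as}}$. Injectivity then gives the equivalence for $N^l(A)$ exactly as before. Alternatively, one can pass to $A^{\mathrm{op}}$, where the associator changes sign and reverses its arguments.

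The main obstacle is sign and position bookkeeping: checking that the twisted argument ends up in the third slot in the left case and the first slot in the right case. Once (vi) and (xiii) are applied to the correct factors, there is no real difficulty.
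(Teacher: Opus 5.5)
Your proposal is correct and matches the paper's proof: both rewrite each term of $\alpha([x,y,z]_{\mathrm{as}})$ using $\alpha(x\cdot y)=x\cdot\alpha(y)$ (resp.\ $\alpha(x\cdot y)=\alpha(x)\cdot y$) from the fundamental lemma, applied once to one product and twice to the other, and then deduce the nucleus equivalence from injectivity by quantifying over $x,y$. Your remark about $A^{\mathrm{op}}$ is a valid alternative, but it is not needed since you already treat the right-unital case directly.
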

		\begin{proof}
			In left-unital hom-associative algebras, it follows from \cref{lem:unit-fundamental} that for all $x,y\in A$,  $\alpha(x\cdot y)=x\cdot \alpha(y)$. Applying this property to the elements $\{x\cdot y,z\}$ yields
			\begin{multline*}
				(x\cdot y)\cdot \alpha(z)=\alpha((x\cdot y)\cdot z)\Rightarrow [x,y,\alpha(z)]_{\mathrm{as}}=(x\cdot y)\cdot \alpha(z)-x\cdot (y\cdot \alpha(z))\\
				=\alpha((x\cdot y)\cdot z)-x\cdot \alpha(y\cdot z)
				=\alpha((x\cdot y)\cdot z)-\alpha(x\cdot (y\cdot z))=
				\alpha([x,y,z]_{\mathrm{as}}).
			\end{multline*}
			If $\alpha$ is injective, then $[x,y,\alpha(z)]_{\mathrm{as}}=\alpha([x,y,z]_{\mathrm{as}})=0\Leftrightarrow [x,y,z]_{\mathrm{as}}=0$.
			
			In right-unital hom-associative algebras, it follows from \cref{lem:unit-fundamental} that for all $x,y\in A$, $\alpha(x\cdot y)=\alpha(x)\cdot y$. Applying this property to elements $\{x,y\cdot z\}$ yields
			\begin{multline*}
				\alpha(x)\cdot (y\cdot z)=\alpha(x\cdot (y\cdot z))\Rightarrow [\alpha(x),y,z]_{\mathrm{as}}=(\alpha(x)\cdot y)\cdot z-\alpha(x)\cdot (y\cdot z)\\
				=\alpha(x\cdot y)\cdot z-\alpha(x\cdot (y\cdot z))
				=\alpha((x\cdot y)\cdot z)-\alpha(x\cdot (y\cdot z))=
				\alpha([x,y,z]_{\mathrm{as}})
			\end{multline*}
			If $\alpha$ is injective, then $[\alpha(x),y,z]_{\mathrm{as}}=\alpha([x,y,z]_{\mathrm{as}})=0\Leftrightarrow [x,y,z]_{\mathrm{as}}=0$.
		\end{proof}
		In the two-sided unital case, both statements in \cref{prop:AssociatorsHomAssociators} hold. It follows that $\alpha((x\cdot y)\cdot z)=\alpha(x\cdot (y\cdot z))$ for all $x,y,z\in A$, that is, $\alpha([x,y,z]_{\mathrm{as}})=0$. Applying this property to \cref{prop:AssociatorsHomAssociators} gives \cite[Proposition 1.1]{FreGohr:UnitCond}, which can be used to prove the following identity:
		\begin{equation*}
			[\alpha(x),y,z]_{\mathrm{as}}=[x,\alpha(y),z]_{\mathrm{as}}=[x,y,\alpha(z)]_{\mathrm{as}}=\alpha([x,y,z]_{\mathrm{as}})=0.
		\end{equation*}
		Thus, $\alpha(A)\subseteq N(A)$.
		This identity applied to a surjective $\alpha$ implies that $(A,1_\star)$ is associative. This is another proof of  \cite[Corollary 1.1]{FreGohr:UnitCond}.
		
		\subsection{Multiplicativity and associativity conditions}\label{ssec:multiplicativity}
		In the literature, hom-algebras are often considered multiplicative \cite{AmmEjbMak:cohomology,MakZah:HAclassification,ArfFraMak:morphisms,Mak:HomAltHomJordan} or constructed multiplicative by twisting the product of an algebra by an algebra homomorphism \cite{BackRic:HAWeyl,BackRicSil:HAOreWeakUnit,Yau:HAhomology}. This allows a direct generalization of many relevant algebraic properties, including cohomology \cite{AmmEjbMak:cohomology} and deformations \cite{ArfFraMak:morphisms}, and multiplicative hom-associative algebras are classified in low dimensions \cite{MakZah:HAclassification}.
		Multiplicativity in one-sided unital algebras behaves as follows.
		\begin{theorem}\label{thm:unital_multiplicativity}
			Let $(A,\alpha,1_\star)$ be a unital hom-associative algebra. The following statements are equivalent.
			\begin{multicols}{2}
				\begin{enumerate}[label=\textnormal{\arabic*)},ref=\textnormal{\arabic*}]
					\item\label{eq:mult_cond_1} $\alpha$ is multiplicative,
					\item\label{eq:mult_cond_2} $\alpha$ is idempotent,
					\item\label{eq:mult_cond_3} $\alpha^2(1_\star)=\alpha(1_\star)$,
					\item\label{eq:mult_cond_4} $\alpha(1_\star)$ is idempotent.
				\end{enumerate}
			\end{multicols}
		\end{theorem}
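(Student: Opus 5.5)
Throughout I treat the left-unital case $(A,\alpha,1_l)$; the right-unital case follows by passing to the opposite algebra, as explained at the start of \cref{sec:unital-1SU}. Write $a=\alpha(1_l)$. By \cref{lem:unit-fundamental}\,(vii) we have $\alpha=L_a$, so $\alpha(x)=a\cdot x$ for all $x$. The argument uses only hom-associativity at well-chosen triples together with the identities of \cref{lem:unit-fundamental}. I plan to prove $3)\Leftrightarrow 4)$, $1)\Rightarrow 4)$, $2)\Leftrightarrow 3)$ and $4)\Rightarrow 1)$.

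\emph{Equivalence $3)\Leftrightarrow 4)$.} Since $\alpha=L_a$, we have $\alpha^2(1_l)=\alpha(a)=a\cdot a$. Hence $\alpha^2(1_l)=\alpha(1_l)$ holds exactly when $a\cdot a=a$.

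\emph{Implication $1)\Rightarrow 4)$.} If $\alpha$ is multiplicative, then
\[
a=\alpha(1_l\cdot 1_l)=\alpha(1_l)\cdot\alpha(1_l)=a\cdot a,
\]
so $a$ is idempotent.

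\emph{Equivalence $2)\Leftrightarrow 3)$.} The direction $2)\Rightarrow 3)$ is immediate: evaluate $\alpha^2=\alpha$ at $1_l$. For $3)\Rightarrow 2)$, apply hom-associativity to the triple $(a,1_l,x)$:
\[
\alpha(a)\cdot(1_l\cdot x)=(a\cdot 1_l)\cdot\alpha(x).
\]
The left side equals $a\cdot x=\alpha(x)$, because $\alpha(a)=a$ by assumption 3). On the right side, $a\cdot 1_l=a$ by \cref{lem:unit-fundamental}\,(v), so the right side is $a\cdot\alpha(x)=\alpha^2(x)$. Therefore $\alpha=\alpha^2$.

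\emph{Implication $4)\Rightarrow 1)$.} This is the step I expect to be the main obstacle, since it requires choosing the right triple. The natural choice is $(a,x,y)$, for which hom-associativity gives
\[
\alpha(a)\cdot(x\cdot y)=(a\cdot x)\cdot\alpha(y)=\alpha(x)\cdot\alpha(y).
\]
Assuming 4), we have $\alpha(a)=a\cdot a=a$. The left side then becomes $a\cdot(x\cdot y)=\alpha(x\cdot y)$. Hence $\alpha(x\cdot y)=\alpha(x)\cdot\alpha(y)$ for all $x,y\in A$, so $\alpha$ is multiplicative. This closes the cycle of implications and proves the theorem.
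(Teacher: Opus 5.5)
Your proof is correct and is essentially the paper's argument. The decisive step $4)\Rightarrow 1)$ uses the same instance of hom-associativity at the triple $(\alpha(1_l),x,y)$, and $3)\Leftrightarrow 4)$ is the same observation $\alpha^2(1_l)=\alpha(1_l)\cdot\alpha(1_l)$ coming from $\alpha=L_{\alpha(1_l)}$. The only difference is bookkeeping: the paper closes the cycle $1)\Rightarrow 2)\Rightarrow 3)\Rightarrow 4)\Rightarrow 1)$ and gets $1)\Rightarrow 2)$ by setting $x=1_l$ in $x\cdot\alpha(y)=\alpha(x\cdot y)=\alpha(x)\cdot\alpha(y)$. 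You instead take the trivial $1)\Rightarrow 4)$, so you need your extra computation at $(a,1_l,x)$ to reach $2)$; that computation in fact shows $\alpha^2=L_{\alpha^2(1_l)}$ in general.
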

		\begin{proof} We prove \ref{eq:mult_cond_1} $\Rightarrow$ \ref{eq:mult_cond_2} $\Rightarrow$ \ref{eq:mult_cond_3} $\Rightarrow$ \ref{eq:mult_cond_4} $\Rightarrow$ \ref{eq:mult_cond_1} for a left-unital algebra $(A,1_l)$:
			\begin{enumerate}[label=,leftmargin=*]
				\item\ref{eq:mult_cond_1}$\Rightarrow$\ref{eq:mult_cond_2}: If $\alpha$ is multiplicative, then $x\cdot \alpha(y)=\alpha(x\cdot y)=\alpha(x)\cdot \alpha(y)$. It follows that $x\cdot \alpha(y)=\alpha(x)\cdot \alpha(y)$, and by contracting $x\mapsto 1_l$ we obtain $ \alpha(y)=\alpha(1_l)\cdot \alpha(y)=\alpha^2(y)$ and thus $\alpha^2=\alpha$.  That is, $\alpha$ is an idempotent.
				\item\ref{eq:mult_cond_2}$\Rightarrow$\ref{eq:mult_cond_3}: If $\alpha^2=\alpha$, then $\alpha(1_l)^2=\alpha(1_l)\cdot \alpha(1_l)=\alpha^2(1_l)=\alpha(1_l)$. It follows that $\alpha^2(1_l)=\alpha(1_l)^2=\alpha(1_l)$. That is, $\alpha(1_l)$ is idempotent.
				\item\ref{eq:mult_cond_3}$\Rightarrow$\ref{eq:mult_cond_4}: From $\alpha(1_l)^2=\alpha^2(1_l)$ it follows that $\alpha(1_l)^2=\alpha(1_l)\Leftrightarrow \alpha^2(1_l)=\alpha(1_l)$.
				\item\ref{eq:mult_cond_4}$\Rightarrow$\ref{eq:mult_cond_1}: If $\alpha^2(1_l)=\alpha(1_l)$, then $\alpha(x)\cdot \alpha(y)=(\alpha(1_l)\cdot x)\cdot \alpha(y)$. By hom-associati\-vity, this is equal to $\alpha^2(1_l)\cdot (x\cdot y)=\alpha(1_l)\cdot (x\cdot y)=\alpha(x\cdot y)$. It follows that $\alpha(x\cdot y)=\alpha(x)\cdot \alpha(y)$. That is, $\alpha$ is multiplicative.
			\end{enumerate}
			The right-unital case is analogous to the left-unital case. This completes the proof.
		\end{proof}
		We can visualize these relations using the following diagram:
		\begin{center}
			\begin{tikzcd}[arrows=Rightarrow]	
				\alpha \textnormal{ multiplicative}
				\arrow{d}
				&
				\alpha^2(1_\star)=\alpha(1_\star)
				\arrow{l}
				\arrow[Leftrightarrow]{d} \\
				
				\alpha \textnormal{ is idempotent}
				\arrow{r}
				&
				\alpha(1_\star) \textnormal{ idempotent}
			\end{tikzcd}
		\end{center}
		
		The twisting map of any unital hom-associative algebra is a multiplication operator, and thus is determined (albeit not always uniquely) by the factor $\alpha(1_\star)$. In other words, there is a correspondence between multiplicative twisting maps and a certain subset of idempotents of the algebra (see \cref{thm:bijection_left_unital}).
		
		\begin{remark}
			The multiplicativity of $\alpha$ can be determined by computing $\alpha(1_\star)^2$. This allows a very streamlined classification of multiplicative hom-associative structures, as will be seen in the sequel, and can moreover be used to rule out hom-associative structures on certain algebras.
		\end{remark}

		\begin{corollary}
			The following equivalent statements are true:
			\begin{enumerate}[label=\textnormal{\arabic*)}]
				\item No multiplicative hom-associative algebra $(A,\alpha)$ with $\alpha^2\neq\alpha$ is unital.
				\item A unital multiplicative hom-associative algebra without non-trivial idempotents is either trivial or associative.
			\end{enumerate}
		\end{corollary}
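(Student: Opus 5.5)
Both items are contrapositives or specializations of \cref{thm:unital_multiplicativity}, so the plan is to read them off that theorem and then check that they are equivalent to each other.

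For statement 1, suppose for contradiction that $(A,\alpha)$ is multiplicative, hom-associative and unital with some one-sided unity $1_\star$. Then \cref{thm:unital_multiplicativity} applies, and the implication \ref{eq:mult_cond_1}$\Rightarrow$\ref{eq:mult_cond_2} gives $\alpha^2=\alpha$. This contradicts the hypothesis $\alpha^2\neq\alpha$, so no such algebra is unital.

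For statement 2, let $(A,\alpha,1_\star)$ be unital and multiplicative. By \cref{thm:unital_multiplicativity}, $e=\alpha(1_\star)$ is idempotent. By \cref{lem:unit-fundamental}, $\alpha=L_e$ in the left-unital case, or $\alpha=R_e$ in the right-unital case. If $A$ has no non-trivial idempotents, then $e\in\{0,1_\star\}$. Here "non-trivial" must be read as "different from $0$ and from the unities". If one-sided unities are not unique, $e$ could be another unity $1_l'$; but then $L_{1_l'}=\mathrm{id}$ anyway, so the conclusion is the same.
\begin{itemize}
\item If $e=0$, then $\alpha=0$, which is the trivial twisting map.
\item If $e$ is a unity, then $\alpha=\mathrm{id}_A$, and hom-associativity becomes associativity.
\end{itemize}
Hence $A$ is either trivial (zero twist) or associative.

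It remains to show the two statements are equivalent. Both are repackagings of the contrapositive of \ref{eq:mult_cond_1}$\Rightarrow$\ref{eq:mult_cond_2}$\Leftrightarrow$\ref{eq:mult_cond_4}, combined with $\alpha=L_{\alpha(1_\star)}$ (respectively $R_{\alpha(1_\star)}$).
\begin{itemize}
\item 1 $\Rightarrow$ 2: in a unital multiplicative algebra, statement 1 forces $\alpha^2=\alpha$. Then $\alpha(1_\star)$ is idempotent, and the case analysis above applies.
\item 2 $\Rightarrow$ 1: a unital multiplicative algebra with $\alpha^2\neq\alpha$ would have $\alpha(1_\star)$ non-idempotent, by \ref{eq:mult_cond_2}$\Leftrightarrow$\ref{eq:mult_cond_4}. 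This contradicts \ref{eq:mult_cond_1}$\Rightarrow$\ref{eq:mult_cond_4}.
\end{itemize}

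The only delicate point is interpreting "non-trivial idempotent" correctly when one-sided unities are not unique. The fix is to note that any idempotent $e$ with $L_e=\mathrm{id}$ already yields the associative case. Everything else is direct bookkeeping from \cref{thm:unital_multiplicativity} and \cref{lem:unit-fundamental}.
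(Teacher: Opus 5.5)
Your proof is correct and is the intended argument: the paper gives no separate proof and treats both items as immediate consequences of \cref{thm:unital_multiplicativity} together with $\alpha=L_{\alpha(1_\star)}$ (resp.\ $R_{\alpha(1_\star)}$) from \cref{lem:unit-fundamental}, and your readings of ``non-trivial idempotent'' (any other one-sided unity $e$ still gives $L_e=\mathrm{id}_A$) and of ``trivial'' as the zero twist are the right ones. The only soft spot is the equivalence paragraph, where neither direction actually derives one item from the other (your $2\Rightarrow 1$ never uses item 2); this costs nothing, because the paper's ``equivalent'' only means that both items are restatements of the same theorem.
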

		
		\begin{example}
			The $3$-dimensional multiplicative hom-associative algebras $A_4^3$, $A_5^3$,  $A_6^3$ and $A_{11}^3$, classified in \textnormal{\cite{MakZah:HAclassification}}, have non-idempotent twisting maps {\rm (}see \textnormal{\cite{GarButSil:ZeroDivision}}{\rm )}, and thus are non-unital.
		\end{example}
		
		\begin{example}
			Any division algebra has no non-trivial idempotents, and thus it cannot be seen as multiplicative hom-associative unless $\alpha(1_\star)=1_\star$, in which case, it is associative.
		\end{example}
		
		\begin{example}[\!\!\cite{SehZas:GRidempotents}]
			Let $R$ be a unital ring without non-trivial idempotents, $G$ a finite group such that every prime divisor of the order of $G$ is not invertible in $R$. The group ring $RG$ has no nontrivial idempotents. As a unital $R$-algebra, $(RG,1e_G)$ cannot be seen as multiplicative hom-associative unless $\alpha(e_G)=e_G$, in which case it is associative.
		\end{example}
		
		\begin{corollary}
			Let $(A,\alpha,1_\star)$ be a unital hom-associative algebra. If any of the statements in \cref{thm:unital_multiplicativity} holds and $\alpha$ is injective or $1_\star\in \alpha(A)$ (and in particular, if $\alpha$ is surjective), then $(A,1_l)$ is associative.
		\end{corollary}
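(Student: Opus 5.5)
By \cref{thm:unital_multiplicativity}, any one of the listed conditions gives all of them. In particular $\alpha^2=\alpha$ and $\alpha$ is multiplicative. I treat the left-unital case $(A,\alpha,1_l)$; the right-unital case follows by passing to the opposite algebra, as agreed at the start of \cref{sec:unital-1SU}. The plan is to show that $\alpha$ is the identity map, so that hom-associativity becomes ordinary associativity.

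\emph{Injective case.} Idempotence gives $\alpha(\alpha(x)-x)=\alpha^2(x)-\alpha(x)=0$ for every $x\in A$. Injectivity then forces $\alpha(x)=x$, so $\alpha=\mathrm{id}_A$. Substituting into the hom-associativity axiom $\alpha(x)\cdot(y\cdot z)=(x\cdot y)\cdot\alpha(z)$ gives $x\cdot(y\cdot z)=(x\cdot y)\cdot z$.

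\emph{Case $1_l\in\alpha(A)$.} Write $1_l=\alpha(u)$. By idempotence, $\alpha(1_l)=\alpha^2(u)=\alpha(u)=1_l$. By \cref{lem:unit-fundamental}(vii), $\alpha=L_{\alpha(1_l)}=L_{1_l}=\mathrm{id}_A$, which is again associativity. Surjectivity is a special case, since it gives $1_l\in\alpha(A)$ immediately.

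Both arguments are short. The only point that needs care is deducing $\alpha(1_l)=1_l$ from $1_l\in\alpha(A)$. Here the key fact is that an idempotent linear map acts as the identity on its image, which is exactly what the computation $\alpha^2(u)=\alpha(u)$ uses.
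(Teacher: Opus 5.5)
Your proposal is correct and follows the paper's proof essentially verbatim: idempotence of $\alpha$ from \cref{thm:unital_multiplicativity}, then $\alpha(\alpha(x)-x)=0$ plus injectivity in the first case, and $1_\star=\alpha(u)=\alpha^2(u)=\alpha(1_\star)$ giving $\alpha=L_{1_\star}=\mathrm{id}_A$ in the second, with surjectivity as a special case. The only cosmetic difference is that you reduce the right-unital case to the left-unital one via the opposite algebra, whereas the paper simply writes $\alpha=L_{1_\star}$ or $\alpha=R_{1_\star}$ directly.
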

		\begin{proof}
			Let $(A,\alpha,1_\star)$ be a one-sided unital hom-associative algebra. If any of the statements in  \cref{thm:unital_multiplicativity} holds, then $\alpha$ is idempotent. Moreover,
			
			\begin{enumerate}
				\item \textbf{If $\alpha$ is injective:} From  $\alpha$ idempotent folows that $\alpha(\alpha(x)-x)=0$. By the injectivity of $\alpha$,  $\alpha(x)=x$, and thus $\alpha=\mathrm{id}_ A $.
				\item \textbf{If $1_\star\in\alpha(A)$:} there is $a\in A$ such that $1_\star=\alpha(a)=\alpha^2(a)=\alpha(1_\star)$. It follows that $\alpha=L_{1_\star}=\mathrm{id}_A$ or $\alpha=R_{1_\star}=\mathrm{id}_A$.
				\item \textbf{If $\alpha$ surjective:} $1_\star\in\alpha(A)$ always holds, as $\alpha(A)=A$.
			\end{enumerate}
			By the twisting map being the identity, $(A,1_\star)$ is associative.
		\end{proof}
		This result expands \cite[Corollary 1.3]{FreGohr:UnitCond} to one-sided unital hom-associative algebras and illustrates how restrictive the multiplicativity of $\alpha$ is: multiplicative unital hom-associative algebras are often just associative, in which case it is much more interesting to study them with the machinery of associative algebras.

		\section{Hom-unities in two-sided unital algebras}\label{sec:hom-unities-2SU}
		
		\begin{definition}
			Given an algebra $A$, a linear map $\alpha\colon A\to A$ is \emph{HA-compatible}\index{HA-compatible}\index{compatible!hom-associative} or a \emph{hom-associative structure of $A$} if $(A,\alpha)$ is hom-associative.
			
			An element $a\in A$ \emph{induces a hom-associative structure} on $A$ if the multiplication operator $\alpha=L_a$ or $\alpha=R_a$ is HA-compatible.
			
			If no nonzero map is HA-compatible, we say that $A$ has no nontrivial hom-associative structures.
		\end{definition}
		
		The twisting map $\alpha$ of a left-unital hom-associative algebra is a multiplication operator $L_a$, and thus $a$ satisfies the hom-unitality axiom $\alpha(x)=a\cdot x$. The search for hom-associative structures is then one with the search for hom-unities of the algebra that induce hom-associative structures.
		
		In two-sided unital algebras, the subspace of HA-compatible maps consists of left multiplications by elements in the following subalgebra (See \cite[Proposition 1.1]{FreGohr:UnitCond})
		\begin{equation*}
			AC(A)=\{a\in Z(A)\mid Aa\subseteq N(A) \textnormal{ is a two-sided ideal}\}.
		\end{equation*}
		Any $a\in AC(A)$ is central. Thus, considering the left or right multiplication is a matter of preference. $AC(A)$ is the subalgebra consisting of all hom-unities of $(A,1_{lr})$ that induce hom-associative structures.
		\begin{theorem}\label{thm:AC_two_sided_unital}
			Let $(A,1_{lr})$ be a two-sided unital algebra. With the notations above,
			\begin{equation}\label{eqn:ACA_formula}
				AC(A)=Z(A)\cap N(A)\cap Ann_A^l([A,A,A]_{\mathrm{as}}).
			\end{equation}
		\end{theorem}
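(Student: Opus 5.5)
The plan is to prove \eqref{eqn:ACA_formula} directly from the hom-associativity identity for $\alpha=L_a$. I use the description of $AC(A)$ as the set of all $a\in A$ for which $L_a$ is HA-compatible; this is the meaning the paper attaches to it via \cite[Proposition 1.1]{FreGohr:UnitCond}. By \cref{lem:unit-fundamental}, every HA-compatible map of $(A,1_{lr})$ is of this form. Writing $\alpha=L_a$, hom-associativity reads
\begin{equation*}
(a x)(y z)=(x y)(a z)\qquad\text{for all } x,y,z\in A. \tag{$\ast$}
\end{equation*}
I will show that $(\ast)$ is equivalent to $a\in Z(A)\cap N(A)\cap Ann_A^l([A,A,A]_{\mathrm{as}})$.

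\textbf{Necessity.} I substitute the two-sided unity $1_{lr}$ into $(\ast)$, in order.
\begin{itemize}
\item With $y=z=1_{lr}$, $(\ast)$ gives $(ax)\cdot 1_{lr}=x\cdot a$, so $ax=xa$ and $a\in Z(A)$.
\item With $y=1_{lr}$, it gives $(ax)z=x(az)$. Using centrality, $(xa)z=x(az)$, so $a\in N^m(A)$.
\item With $z=1_{lr}$, it gives $(ax)y=(xy)a=a(xy)$, so $a\in N^l(A)$.
\item With $x=1_{lr}$, it gives $a(yz)=y(az)$. Hence $(yz)a=a(yz)=y(az)=y(za)$, so $a\in N^r(A)$.
\end{itemize}
Thus $a\in N(A)$. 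Now I return to general $x,y,z$. The left side of $(\ast)$ is $(ax)(yz)=a(x(yz))$, because $a\in N^l(A)$. For the right side, $(xy)(az)=(xy)(za)=((xy)z)a=a((xy)z)$, using centrality and $a\in N^r(A)$. So $(\ast)$ becomes $a\cdot[x,y,z]_{\mathrm{as}}=0$, which says $a\in Ann_A^l([A,A,A]_{\mathrm{as}})$.

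\textbf{Sufficiency.} I run the same chain of equalities backwards. If $a$ is central, lies in $N(A)$ and left-annihilates all associators, then $(ax)(yz)=a(x(yz))=a((xy)z)=((xy)z)a=(xy)(za)=(xy)(az)$. Hence $(\ast)$ holds and $L_a$ is HA-compatible. Since $a$ is central, $L_a=R_a$, so the choice of side does not matter.

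\textbf{Ideal formulation.} If one insists on the literal description ``$Aa\subseteq N(A)$ is a two-sided ideal'', I would add a short check. For $a$ in the right-hand side, $Aa$ is a two-sided ideal because $(xa)y=a(xy)$ and $y(xa)=(yx)a$ by the nucleus and centrality properties. For $Aa\subseteq N(A)$, I check one position: $[xa,u,v]_{\mathrm{as}}=a[x,u,v]_{\mathrm{as}}+(\text{terms vanishing since }a\in N(A))$, and this is $0$ by the annihilator condition. The other two positions are analogous. Conversely, $1_{lr}\cdot a=a\in Aa\subseteq N(A)$, and the annihilator condition follows from the same expansion.

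\textbf{Main obstacle.} The main obstacle is bookkeeping rather than ideas. Each nucleus-type rewriting must use only the property already established at that step; in particular, centrality must be derived first, since the middle- and right-nucleus conclusions depend on it. The expansions of $[xa,u,v]_{\mathrm{as}}$ in the other two positions need the same care.
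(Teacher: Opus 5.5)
Your proof is correct, but its main argument takes a different route from the paper's. The paper takes the Fr\'egier--Gohr description $AC(A)=\{a\in Z(A)\mid Aa\subseteq N(A)\text{ is a two-sided ideal}\}$ as the definition and only translates it. From $a=1_{lr}\cdot a\in Aa$ it gets $a\in N(A)$. Using $a\in Z(A)\cap N(A)$ it then shows $[ax,y,z]_{\mathrm{as}}=[x,ay,z]_{\mathrm{as}}=[x,y,za]_{\mathrm{as}}=a[x,y,z]_{\mathrm{as}}$, so $Aa\subseteq N(A)$ is equivalent to $a\in Ann_A^l([A,A,A]_{\mathrm{as}})$. The ideal property is checked last. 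That is exactly your ``ideal formulation'' paragraph. So, strictly speaking, it is that paragraph that proves the statement with the paper's literal definition of $AC(A)$.

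Your main argument instead starts from the hom-associativity identity $(ax)(yz)=(xy)(az)$. It extracts centrality, then the three nucleus conditions, then the annihilator condition, by placing $1_{lr}$ in different slots. Every substitution checks out, and you are right that centrality must be derived first.

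What your route buys is self-containment. You show directly that $L_a$ is HA-compatible if and only if $a\in Z(A)\cap N(A)\cap Ann_A^l([A,A,A]_{\mathrm{as}})$. Combined with \cref{lem:unit-fundamental} and your ideal paragraph, you therefore reprove the Fr\'egier--Gohr identification of $Twist(A)$ with $AC(A)$ rather than cite it. You also make visible which substitution forces which condition.

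The paper's route is shorter given that citation. It also keeps in view the structural fact that $Aa$ is a two-sided ideal contained in $N(A)$, which is the form used afterwards, e.g.\ in \cref{ex:two_sided_unital_simple}.
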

		\begin{proof}
			For any $a\in AC(A)$, it follows from $Aa\subseteq N(A)$ that $a\in N(A)$. Thus, $AC(A)\subseteq Z(A)\cap N(A)$. Moreover, $Aa\subseteq N(A)$ is a two-sided ideal only if the following association relations hold for all $x,y,z\in A$:
			\begin{align*}
				[ax,y,z]_{\mathrm{as}}&=((ax)y)z-(ax)(yz)=(a(xy))z-a(x(yz))\\
				&\hspace{4.25cm}=a((xy)z)-a(x(yz))=a[x,y,z]_{\mathrm{as}},\\
				[x,ay,z]_{\mathrm{as}}&=(x(ay))z-x((ay)z)=((xa)y)z-x(a(yz))\\
				&\hspace{1.25cm}=((ax)y)z-(ax)(yz)=(a(xy))z-a(x(yz))\\
				&\hspace{4.25cm}=a((xy)z)-a(x(yz))=a[x,y,z]_{\mathrm{as}},\\
				[x,y,za]_{\mathrm{as}}&=(xy)(za)-x(y(za))=((xy)z)a-x((yz)a)\\
				&\hspace{2cm}=((xy)z)a-(x(yz))a=[x,y,z]_{\mathrm{as}}a=a[x,y,z]_{\mathrm{as}}.
			\end{align*}
			These relations indicate that $Aa\subseteq N(A)$ is a two-sided ideal if and only if $a[x,y,z]_{\mathrm{as}}=0$ for all $x,y,z\in A$, that is, $a\in Ann_{A}^l([A,A,A]_{\mathrm{as}})$. Finally, ideal conditions for $Aa$ follow from $a\in Z(A)\cap N(A)$:
			\begin{align*}
				&\textbf{Left ideal: }\text{for all }  b\in A,\ b(ax)=b(xa)=(bx)a\in Aa. \\
				&\textbf{Right ideal: }\text{for all } b\in A,\ (xa)b=x(ab)=x(ba)=(xb)a\in Aa.
			\end{align*}
			This completes the proof.
		\end{proof}
		Because this subalgebra consists entirely of central elements, it can be rewritten in any of the following forms:
		\begin{align*}
			AC(A)=Z(A)\cap N(A)\cap Ann_A^l([A,A,A]_{\mathrm{as}})& \\
			=Z(A)\cap N(A)\cap Ann_A^r([A,A,A]_{\mathrm{as}})&\\
			=Z(A)\cap N(A)\cap Ann_A([A,A,A]_{\mathrm{as}})&.
		\end{align*}
		
		\begin{example}\label{ex:two_sided_unital_associative}
			Any two-sided unital associative algebra $(A,1_{lr})$ satisfies $N(A)=A$ and $[A,A,A]_{\mathrm{as}}=\{0\}$, and thus
			$AC(A)=Z(A)$. That is, (the product in) $(A,\alpha,1_{lr})$ is hom-associative if and only if the twisting map is $\alpha=L_\omega$ for some $\omega\in Z(A)$. Then, $(A,L_\omega,\omega)$ is two-sided hom-unital hom-associative. The algebra in Example \ref{ex:counter-poly} is a case of this, with hom-unity $\omega=t$.
		\end{example}

		\begin{example}\label{ex:two_sided_unital_simple}
			Any two-sided unital simple algebra with nontrivial HA-compatible maps is associative. Indeed, if $(A,1_{lr})$ is two-sided unital simple and $AC(A)\neq\{0\}$, then $Aa$ is a two-sided ideal for some $a\neq 0$ in $AC(A)$. This implies one of the following:
			\begin{align*}
				A a&=\{0\}\overset{a\in Z(A)}{\Longrightarrow} L_a=0\Rightarrow Twist(A)=\{0\}\Rightarrow a=0,\\
				A a&= A \overset{ A a\subseteq N(A)}{\Longrightarrow} N(A)= A \Rightarrow (A,1_{lr}) \textnormal{ is associative.}
			\end{align*}
			By associativity, as seen in Example \ref{ex:two_sided_unital_associative}, we have $AC(A)=Z(A)$.
		\end{example}
		
		\begin{corollary}\label{cor:HA_non_injective}
			No HA-compatible map of a two-sided unital nonassociative algebra is injective. Moreover, if there exist $x,y,z\in A$ such that $[x,y,z]_{\mathrm{as}}$ is one-sided regular, then $AC(A)=\{0\}$.
		\end{corollary}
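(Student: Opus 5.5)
By \cref{thm:AC_two_sided_unital}, every HA-compatible map of a two-sided unital algebra $(A,1_{lr})$ has the form $\alpha=L_a=R_a$ with
\[
a\in AC(A)=Z(A)\cap N(A)\cap Ann_A^l([A,A,A]_{\mathrm{as}}).
\]
For the first claim, suppose $A$ is nonassociative. Then there are $x,y,z\in A$ with $u:=[x,y,z]_{\mathrm{as}}\neq 0$. Since $a\in Ann_A^l([A,A,A]_{\mathrm{as}})$, we get $\alpha(u)=a\cdot u=0$. So $u$ is a nonzero element of $\ker\alpha$, and $\alpha$ is not injective.

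There is an alternative route via \cref{prop:AssociatorsHomAssociators} and the discussion after it. In the two-sided unital case that discussion gives $\alpha([x,y,z]_{\mathrm{as}})=0$ for all $x,y,z$. Injectivity would then force all associators to vanish, so $A$ would be associative. I would mention this as a remark, but the annihilator argument is shorter.

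For the second claim, suppose $u=[x,y,z]_{\mathrm{as}}$ is one-sided regular, and let $a\in AC(A)$. We know $a\cdot u=0$. The three descriptions of $AC(A)$ listed after \cref{thm:AC_two_sided_unital} show that also $u\cdot a=0$; directly, centrality of $a$ gives $u\cdot a=a\cdot u=0$. We then split into cases by the side of regularity.
\begin{enumerate}
\item If $u$ is right-regular, meaning $w\cdot u\neq 0$ for all $w\neq 0$, then $a\cdot u=0$ forces $a=0$.
\item If $u$ is left-regular, meaning $u\cdot w\neq 0$ for all $w\neq 0$, then $u\cdot a=0$ forces $a=0$.
\end{enumerate}
In either case $AC(A)=\{0\}$.

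The main step to watch is making sure the annihilator membership applies on the correct side in each case. This is exactly where the centrality of $a$, equivalently the equality of the left, right and two-sided annihilator forms of $AC(A)$, is needed. Everything else is a direct consequence of \eqref{eqn:ACA_formula}.
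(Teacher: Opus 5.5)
Your proof is correct and follows the paper's argument. Membership of $a$ in $Ann_A^l([A,A,A]_{\mathrm{as}})$ puts a nonzero associator in $\ker L_a$, and an associator that is regular on the appropriate side forces $a=0$; your use of the centrality of $a$ to handle the left-regular case spells out what the paper covers with ``without loss of generality''.
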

		\begin{proof}
			Let $a\in AC(A)$. Then $a\in Ann_A^l([A,A,A]_{\mathrm{as}})$. We can write this in the form $a\cdot [A,A,A]_{\mathrm{as}}=\{0\}$ or $[A,A,A]_{\mathrm{as}}\subseteq\ker L_a$. Since $A$ is non-associative,  $\ker L_a$ is nontrivial. If, for some $x,y,z\in A$,  $[x,y,z]_{\mathrm{as}}$ is (without loss of generality) right-regular, then there is no nonzero $a\in A$ such that $[x,y,z]_{\mathrm{as}}\in \ker L_a$, that is, $Ann_A^l([A,A,A]_{\mathrm{as}})=\{0\}$. Conclusion follows.
		\end{proof}

		All hom-associative structures of a given two-sided unital algebra are given by multiplication with central elements; that is, the twisting maps are determined by two-sided hom-unities. This problem can be reversed in different ways, which leaves two interesting open questions.
		\begin{enumerate}
			\item Can a given hom-unital hom-associative algebra be interpreted as a hom-assoc\-iative structure over another algebra?
			\item Given a linear map of $A$, when is it possible to find a product in $A$ that makes it HA-compatible?
		\end{enumerate}
		
		\begin{example}
			The first Cayley-Dickson algebras over $\mathbb{R}$ are the complex numbers $\mathbb{C}$, the quaternions $\mathbb{H}$, the octonions $\mathbb{O}$, the sedenions $\mathbb{S}$ and more, each obtained by \textit{doubling} the previous one, by adding a new generator and a new involution.
			\begin{equation*}
				\begin{array}{c|c|c|c}
					\textnormal{Algebra} & \textnormal{Construction} & \textnormal{Generators} & \textnormal{Properties}\\
					\hline
					\mathbb{C} & \mathbb{R}\oplus\mathbb{R}i & \{1,i\} & i^2=-1.\\
					\mathbb{H} & \mathbb{C}\oplus\mathbb{C}j & \{1,i,j,k\} & ij=k,\ ijk=-1.
				\end{array}
			\end{equation*}
			Using this process twice more we obtain $\mathbb{O}$ with 8 generators and $\mathbb{S}$ with 16.
			
			Each step of the Cayley-Dickson process doubles the dimension in exchange for certain algebraic properties: $\mathbb{R}$ is commutative associative domain, $\mathbb{H}$ is non-commutative associative domain, $\mathbb{O}$ is non-commutative alternative domain, $\mathbb{S}$ is non-commutative flexible and has zero divisors. All of these algebras are two-sided unital with unity $1$. The coefficient in $1$ is called the \emph{real part} of these objects, and we identify the subalgebra of \emph{real} elements (with only real part) with $\mathbb{R}$.
			The hom-unities inducing HA-compatible maps in $\mathbb{C}$, $\mathbb{H}$, $\mathbb{O}$ and $\mathbb{S}$ are
			\begin{enumerate}[label=\textnormal{\arabic*)}]
				\item $\mathbb{C}\colon\ AC(\mathbb{C})=\mathbb{C}$.
				\item $\mathbb{H}\colon\ AC(\mathbb{H})=Re(\mathbb{H})=\mathbb{R}\cdot 1$.
				\item $\mathbb{O}\colon\ AC(\mathbb{O})=\{0\}$.
				\item $\mathbb{S}:\ AC(\mathbb{S})=\{0\}$.
			\end{enumerate}
			
			We use the explicit formulation given by \cref{thm:AC_two_sided_unital}, namely
			\begin{equation*}
				AC(A)=Z(A)\cap N(A)\cap Ann_A^l([A,A,A]_{\mathrm{as}}),
			\end{equation*}
			given that $\mathbb{C}, \mathbb{H},\mathbb{O}$ and $\mathbb{S}$ are two-sided unital. We can then see all HA-compatible maps as multiplication operators, following \cref{tab:product_relations_one}.
			
			$\mathbb{C}$ is a commutative associative domain; thus, $[\mathbb{C},\mathbb{C},\mathbb{C}]_{\mathrm{as}}=\{0\}$. It follows that $AC(\mathbb{C})=Z(\mathbb{C})\cap N(\mathbb{C})\cap Ann_\mathbb{C}^l(\{0\})=\mathbb{C}$.
			
			$\mathbb{H}$ is an associative domain; thus, $[\mathbb{H},\mathbb{H},\mathbb{H}]_{\mathrm{as}}=\{0\}$. It follows that $$AC(\mathbb{H})=Z(\mathbb{H})\cap N(\mathbb{H})\cap Ann_\mathbb{H}^l(\{0\})=Re(\mathbb{H})\cap \mathbb{H}\cap \mathbb{H}=Re(\mathbb{H})=\mathbb{R}1.$$
			
			$\mathbb{O}$ is a domain. We have $Ann_\mathbb{O}^l([\mathbb{O},\mathbb{O},\mathbb{O}]_{\mathrm{as}})=\{0\}$. It follows that $AC(\mathbb{O})=Z(\mathbb{O})\cap N(\mathbb{O})\cap\{0\}=\{0\}$.
			
			Regarding $\mathbb{S}$, observe that $Z(\mathbb{S})=N(\mathbb{S})=Re(\mathbb{S})\cong\mathbb{R}$. From the definition of $AC(\mathbb{S})$ it follows that $AC(A)\cong T\subseteq\mathbb{R}$.  Since $\mathbb{S}$ is non-associative, there exists a nonzero element  $z\in [\mathbb{S},\mathbb{S},\mathbb{S}]_{\mathrm{as}}$. The real part $e_0$ of any sedenion associate, $N(\mathbb{S})=Re(\mathbb{S})$, and thus $z$ has the form $z=\sum\limits_{i=1}^{15}a_ie_i$. For all $\lambda\in\mathbb{R}$, it holds that $\lambda z=0\Leftrightarrow \lambda a_i=0$ for all $i\in\{1,\dots,15\}$. Given that $z\neq 0$, there is at least one $a_i\neq 0$, and thus $\lambda=0$. It follows that $Ann_\mathbb{S}^l([\mathbb{S},\mathbb{S},\mathbb{S}]_{\mathrm{as}})\cap Re(\mathbb{S})=\{0\}$, from which $AC(\mathbb{S})=\{0\}$.
			\begin{remark}
				$AC(A)$ is a subalgebra of any two-sided unital algebra $A$. Then, $0$ is the hom-unity corresponding to the zero map, which we can see as $0\mathrm{id}_A$ or multiplication by $0\cdot 1_{lr}$.
			\end{remark}
			It is shown that all left multiplications in $\mathbb{C}$ are HA-compatible, while $\mathbb{H}$, $\mathbb{O}$ and $\mathbb{S}$ admit no nontrivial hom-associative structures. Every HA-compatible map is either a real multiple of the identity (in the case of $\mathbb{H}$) or the zero map, seen as multiplication by $0$.
			The Cayley-Dickson process removes association relations: from the associative algebra $\mathbb{H}$ we obtain the alternative algebra $\mathbb{O}$ and then the flexible algebra $\mathbb{S}$. The loss of hom-unities from $\mathbb{H}$ to $\mathbb{O}$ is due to the loss of associativity while maintaining the domain property.
		\end{example}

		\section{Hom-unities in one-sided unital algebras}\label{sec:hom-unities-1SU}
		In this section, we study left-unital algebras, which we denote $(A,1_l)$. The right-unital versions of all results in this section follow by applying them to the opposite algebra $A^{\mathrm{op}}=(A,\mu_{\mathrm{op}},1_\star)$ -- as seen at the beginning of \cref{sec:unital-1SU}, $A^{\mathrm{op}}$ which is left-unital (right-unital) if and only if $(A,\mu,1_\star)$ is right-unital (left-unital) with the same unity.
		
		Consider the maps
		\begin{align}
			\begin{array}{ccc}
				\phi:Twist(A)& \rightarrow &   A   \\
				\beta & \mapsto & \beta(1_l)
			\end{array} \quad \text{ and } \quad
			\begin{array}{ccc}
				\psi:AC(A)& \rightarrow &  \mathcal{L}(A)   \\
				a & \mapsto & L_a:x\mapsto ax
			\end{array}
		\end{align}
		If $(A,1_\star)$ is two-sided unital, Fr\'{e}gier and Gohr~\cite{FreGohr:UnitCond} proved that these maps induce a bijection between all HA-compatible maps of $A$ and the subalgebra $AC(A)$ of hom-unities of the given product.
		
		The aim of this section is to describe the linear subspace of left hom-unities of a given left-unital algebra.
		
		The set of all left hom-unities with respect to all linear maps on any algebra form a linear subspace of $A$ since for any linear maps $\alpha$ and $\beta$ on $A$ and for any $x\in A$ and $\lambda,\mu\in\mathbb{F}$,
		$$ax=\alpha(x),\ bx=\beta(x) \Rightarrow
		\begin{array}{l}
			(\lambda a +\mu b) x= \lambda (ax)+\mu (bx) \\
			\quad = \lambda \alpha(x)+\mu\beta(x)=(\lambda \alpha+\mu\beta)(x),
		\end{array}$$
		and hence, in particular, if $a$ is a hom-unity with respect to $\alpha$ and $b$ is a hom-unity with respect to $\beta$, then $\lambda a +\mu b$ is a hom-unity with respect to the linear map $\lambda \alpha+\mu\beta$. In other words, if  $\alpha=L_a$ and $\beta=L_b$ for some $a,b\in A$, then
		$\lambda \alpha+\mu\beta=L_{\lambda a +\mu b}$ which means that for any algebra $A$, the set of all linear operators of multiplication from the left by elements of the algebra forms a linear subspace of the space of all linear operators on the algebra $A$. In most algebras, it is a proper subspace because the elements of this subspace are special linear transformations given by the multiplication operation of the algebra. In other words, in most algebras, the linear subspace of all left hom-unities with respect to all linear maps forms a proper linear subspace of $A$.
		For given elements $x,y,z\in A$ in an algebra $A$, the hom-associator
		$[x,y,z]_{\mathrm{as}}^\alpha=(x\cdot y)\cdot \alpha(z)-\alpha(x)\cdot (y\cdot z)$ is a linear map $\alpha\in {\cal L}(A) \to [x,y,z]_{\mathrm{as}}^\alpha \in A$ from the linear space of all linear transformations ${\cal L}(A)$ of $A$  to $A$.
		The kernel of this linear map is a linear subspace of ${\cal L}(A)$ of all linear maps $\alpha$ such that the elements $x,y,z$ hom-associate with respect to twisting map $\alpha$, that is
		$\alpha(x)\cdot (y\cdot z)=(x\cdot y)\cdot \alpha(z)$. The intersection of these associator kernel subspaces is the linear subspace of ${\cal L}(A)$ consisting of all hom-associative structures on $A$, that is, of all linear maps $\alpha$ for which the hom-algebra $(A,\alpha)$ is a hom-associative (such $\alpha$ for which $\alpha(x)(yz)=(xy)\alpha(z)$ for all $x,y,z\in A$).
		To make this more explicit, if $(A,\cdot,\alpha)$ and if $(A,\cdot,\beta)$ are hom-associative, then for any $\lambda,\mu\in\mathbb{F}$,
		\begin{multline*}
			(\lambda \alpha+\mu \beta)(x)\cdot (y\cdot z)=\lambda \alpha(x)\cdot (y\cdot z)+\mu \beta(x)\cdot (y\cdot z)\\
			=(x\cdot y)\cdot \lambda \alpha(z)+(x\cdot y)\cdot \mu \beta(z)=(x\cdot y)\cdot (\lambda \alpha+\mu \beta)(z).
		\end{multline*}
		Thus, $\lambda \alpha+\mu \beta$ is a hom-associative structure on $A$.
		The intersection of the linear subspace in ${\cal L}(A)$ of hom-associative structures on an algebra $A$ with the linear subspace of linear maps of multiplication by elements of the algebra from the left is a linear subspace of ${\cal L}(A)$ consisting of the left multiplication operators $L_a, a\in A$ such that $(A,L_a)$ is hom-associative.
		In other words, $$\{L_a \mid a \text{ is left hom-unity of } A,\ (A,L_a) \text{ is hom-associative}\}$$ is a linear subspace of ${\cal L}(A)$. Since the map $a\mapsto L_a$ from $A$ to ${\cal L}(A)$ is linear, $$\{a\in A \mid a \text{ is hom-unity of } A,\ (A,L_a) \text{ is hom-associative}\},$$
		is a linear subspace of $A$ as the preimage of
		the linear subspace $$\{L_a \mid a \text{ is hom-unity of } A,\ (A,L_a) \text{ is hom-associative}\}$$ of ${\cal L}(A)$ under this linear map.
		
		Any hom-unity $a$ of a left-unital hom-associative algebra $(A,L_a,1_l)$ satisfies the following three properties for all $x,y,z\in A$:
		\begin{enumerate}[label=\textnormal{\arabic*)}]
			\item $L_{ax}\circ L_y=L_{xy}\circ L_a$, or $(a\cdot x)\cdot (y\cdot z)=(x\cdot y)\cdot (a\cdot z)$.
			\item $L_a\circ L_x=L_x\circ L_a$, or $a\cdot (x\cdot z)=x\cdot (a\cdot z)$.
			\item $L_{ax}\circ L_y=L_a\circ L_{xy}$, or $(a\cdot x)\cdot (y\cdot z)=a\cdot ((x\cdot y)\cdot z)$.
		\end{enumerate}
		The first property is hom-associativity; the second property results from applying hom-associativity to a triple of the form $\{1_l,x,z\}$, as a direct consequence of left unitality (see \cref{tab:product_relations_one}); the third property follows from the other two.
		
		Consider the linear subspace of elements of $A$ satisfying, for all $x,y,z\in A$, the last two conditions, that is,
		\begin{align*}
			&AC_l(A)=\{a\in A \mid L_a\circ L_x=L_x\circ L_a,\ L_{ax}\circ L_y=L_a\circ L_{xy},\ \forall x,y\in A\}\\
			&=\{a\in A \mid \underset{-\textnormal{Property \ref{item:ACL1}} -}{a\cdot (x\cdot z)=x\cdot (a\cdot z)},\ \underset{-\textnormal{Property \ref{item:ACL2} } -}{(a\cdot x)\cdot (y\cdot z)=a\cdot ((x\cdot y)\cdot z)},\ \forall x,y,z\in A\}
		\end{align*}
		\begin{remark}
			Elements in $AC_l(A)$ satisfy multiple properties, which are explored in \cref{ssec:relations-NA-ACL}. \textnormal{\ref{item:ACL1}} and \textnormal{\ref{item:ACL2}} appear in \cref{tab:ACL}, also listed as \textnormal{\ref{item:ACL1}} and \textnormal{\ref{item:ACL2}}. This notation is used throughout the remainder of this paper.
		\end{remark}
		Given one-sided unitality, we can define the subspace
		\begin{equation}\label{eqn:ACL1_definition}
			AC_l(A)\cdot 1_l=\{a\cdot 1_l\mid a\in AC_l(A)\},
		\end{equation}
		which interacts with $AC_l(A)$ as follows:
		\begin{lemma}\label{lem:ACLcommutes}
			Let $(A,1_l)$ be a left-unital algebra. With the notations above, it holds that
			\begin{equation*}
				\{a\in AC_l(A)\mid a\cdot 1_l=a\}=AC_l(A)\cdot 1_l\subseteq AC_l(A).
			\end{equation*}
		\end{lemma}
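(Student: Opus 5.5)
The plan is to show that right multiplication by $1_l$ does not change the left multiplication operator of an element of $AC_l(A)$. More precisely, for $a\in AC_l(A)$ we show $L_{a\cdot 1_l}=L_a$. Both inclusions then follow almost immediately.

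\textbf{Step 1: $L_{a\cdot 1_l}=L_a$.} Fix $a\in AC_l(A)$ and put $b=a\cdot 1_l$. Apply Property \ref{item:ACL2}, $(a\cdot x)\cdot (y\cdot z)=a\cdot ((x\cdot y)\cdot z)$, with the choice $x=y=1_l$. Using left unitality twice, once as $1_l\cdot z=z$ and once as $1_l\cdot 1_l=1_l$, this gives
\begin{equation*}
(a\cdot 1_l)\cdot z=(a\cdot 1_l)\cdot (1_l\cdot z)=a\cdot ((1_l\cdot 1_l)\cdot z)=a\cdot (1_l\cdot z)=a\cdot z
\end{equation*}
for all $z\in A$. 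Hence $L_b=L_a$. This is the only step requiring an idea, namely the specialization $x=y=1_l$ in Property \ref{item:ACL2}. Once it is found, the rest is bookkeeping.

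\textbf{Step 2: $AC_l(A)\cdot 1_l\subseteq AC_l(A)$.} Both defining conditions of $AC_l(A)$ involve the element only through left multiplications. For Property \ref{item:ACL1} we compute $b\cdot(x\cdot z)=a\cdot (x\cdot z)=x\cdot (a\cdot z)=x\cdot (b\cdot z)$. For Property \ref{item:ACL2}, note that $b\cdot x=a\cdot x$ by Step 1, so
\begin{equation*}
(b\cdot x)\cdot (y\cdot z)=(a\cdot x)\cdot (y\cdot z)=a\cdot ((x\cdot y)\cdot z)=b\cdot ((x\cdot y)\cdot z).
\end{equation*}
Thus $b\in AC_l(A)$.

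\textbf{Step 3: the set equality.} First, $b\cdot 1_l=a\cdot 1_l=b$ by Step 1 applied to $z=1_l$. Together with Step 2 this gives $AC_l(A)\cdot 1_l\subseteq\{c\in AC_l(A)\mid c\cdot 1_l=c\}$. Conversely, if $c\in AC_l(A)$ satisfies $c\cdot 1_l=c$, then $c=c\cdot 1_l\in AC_l(A)\cdot 1_l$ directly from definition \eqref{eqn:ACL1_definition}. This gives the equality and, with Step 2, the stated inclusion into $AC_l(A)$.
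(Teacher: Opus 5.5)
Your proof is correct and follows essentially the same route as the paper. Both establish the key identity $L_{a\cdot 1_l}=L_a$ by contracting Property \ref{item:ACL2} at $1_l$ (you substitute $x=y=1_l$ at once, the paper contracts $x$ and then $y$), then check Properties \ref{item:ACL1} and \ref{item:ACL2} for $a\cdot 1_l$ by substitution and read off the set equality. Your ordering, proving $L_{a\cdot 1_l}=L_a$ before using it, is slightly cleaner than the paper's, which invokes this identity in its Property \ref{item:ACL1} check before deriving it.
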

		
		\begin{proof}
			Let $b\in AC_l(A)\cdot 1_l$. There exists $a\in AC_l(A)$ such that $b=a\cdot 1_l$. We check properties \ref{item:ACL1} and \ref{item:ACL2} for $b$ using those of $a$:
			\begin{align*}
				\ref{item:ACL1}&\colon L_{a1_l}=L_a \overset{\ref{item:ACL1}_a}{\Rightarrow} L_{a1_l}\circ L_x=L_x\circ L_{a1_l}\\
				\ref{item:ACL2}&\colon (a\cdot 1_l)\cdot ((x\cdot y)\cdot z)=a\cdot ((1_l\cdot (x\cdot y))\cdot z)=a\cdot ((x\cdot y)\cdot z)=(a\cdot x)\cdot (y\cdot z)\\
				&\hspace{9.25cm}\overset{\ref{item:ACL2}_a}{=}((a\cdot 1_l)\cdot x)\cdot (y\cdot z).
			\end{align*}
			Finally, by contracting $x\mapsto 1_l$ in \ref{item:ACL2}, we have $(a\cdot 1_l)\cdot (y\cdot z)=a\cdot ((1_l\cdot y)\cdot z)=a\cdot (y\cdot z)$. Now contract $y\mapsto 1_l$, we obtain $(a\cdot 1_l)\cdot z=a\cdot (1_l\cdot z)=a\cdot z$. It follows that $L_a=L_{a1_l}$.
			
			To prove the equality: For every $b\in AC_l(A)\cdot 1_l$ we have $b\cdot 1_l=b$, and for every $a\in AC_l(A)$ commuting with $1_l$ we have $a=a\cdot 1_l\Rightarrow a\in AC_l(A)\cdot 1_l$.
		\end{proof}
		
		The equality $AC_l(A)\cdot 1_l=AC_l(A)$ holds when $Ann_ A^l(A)=\{0\}$. This is, in fact, an equivalence, which we prove in \cref{thm:ACL_split}.
		
		$AC_l(A)\cdot 1_l$ is not straightforward to compute, and there is no decomposition in the center, nucleus, and annihilators, as in two-sided unital algebras \eqref{eqn:ACA_formula}. Nonetheless,  its subspace admits such a decomposition.
		
		\begin{proposition}\label{prop:ACL1_subspace}
			Let $(A,1_l)$ be a left-unital algebra. Consider the subspace
			\begin{equation*}
				HU_n^l(A)=Z(A\cdot A)\cap N^l(A)\cap N^m(A)\cap Ann_A^l([A,A,A]_{\mathrm{as}}).
			\end{equation*}
			It holds that $HU_n^l(A)\subseteq AC_l(A)\cdot 1_l$.
		\end{proposition}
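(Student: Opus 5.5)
The plan is to use \cref{lem:ACLcommutes}, which identifies $AC_l(A)\cdot 1_l$ with $\{a\in AC_l(A)\mid a\cdot 1_l=a\}$. So, given $a\in HU_n^l(A)$, it suffices to show two things: that $a\cdot 1_l=a$, and that $a$ satisfies Properties \ref{item:ACL1} and \ref{item:ACL2} defining $AC_l(A)$. Each condition in the definition of $HU_n^l(A)$ is meant to supply one of these.

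First, I will show that $Z(A\cdot A)$, read as the centralizer of $A\cdot A$, equals $Z(A)$. Left unitality gives $x=1_l\cdot x\in A\cdot A$ for every $x\in A$, so $A\cdot A=A$. Hence $a$ commutes with every element of $A$, and in particular $a\cdot 1_l=1_l\cdot a=a$. This is the condition from \cref{lem:ACLcommutes}.

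For Property \ref{item:ACL1}, I use the left nucleus, then centrality, then the middle nucleus. Since $[a,x,z]_{\mathrm{as}}=0$, centrality gives $ax=xa$, and $[x,a,z]_{\mathrm{as}}=0$, we get
\[
a\cdot(x\cdot z)=(a\cdot x)\cdot z=(x\cdot a)\cdot z=x\cdot(a\cdot z).
\]
For Property \ref{item:ACL2}, I first use $a\in N^l(A)$ with the triple $(a,x,yz)$ to write $(a\cdot x)\cdot(y\cdot z)=a\cdot(x\cdot(y\cdot z))$. Then
\[
a\cdot((x\cdot y)\cdot z)-(a\cdot x)\cdot(y\cdot z)=a\cdot[x,y,z]_{\mathrm{as}}=0,
\]
because $a\in Ann_A^l([A,A,A]_{\mathrm{as}})$. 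Thus $a\in AC_l(A)$ with $a\cdot 1_l=a$, and \cref{lem:ACLcommutes} gives $a\in AC_l(A)\cdot 1_l$.

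I expect no real obstacle. The only delicate point is the interpretation of $Z(A\cdot A)$ and the observation $A\cdot A=A$ from left unitality, which is what makes centrality available against arbitrary elements and against $1_l$. Should $Z(A\cdot A)$ instead denote the center of the subalgebra $A\cdot A$, the same argument still applies, since $A\cdot A=A$.
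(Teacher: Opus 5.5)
Your proof is correct and is essentially the paper's argument: it uses the same four membership conditions and the same observation $Z(A\cdot A)=Z(A)$ (which follows from $A\cdot A=A$) to get $a\cdot 1_l=a$, and then concludes via \cref{lem:ACLcommutes}. The only difference is that the paper shows the hom-associator $[x,y,z]^{L_a}_{\mathrm{as}}$ equals $a\cdot[x,y,z]_{\mathrm{as}}=0$ and relies on the earlier observation that hom-associativity of $(A,L_a)$ with a left unity forces \ref{item:ACL1} and \ref{item:ACL2}, whereas you verify \ref{item:ACL1} and \ref{item:ACL2} directly, which is slightly more self-contained.
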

		\begin{proof}
			Let $a\in HU_n^l(A)$. We can express the hom-associator in $(A,L_a)$ as
			\begin{align*}
				&(a\cdot x)\cdot (y\cdot z)=a\cdot (x\cdot (y\cdot z)),\\
				&(x\cdot y)\cdot (a\cdot z)=((x\cdot y)\cdot a)\cdot z=(a\cdot (x\cdot y))\cdot z=a\cdot ((x\cdot y)\cdot z) \\
				&\hspace{1.8cm}\Rightarrow [x,y,z]_{\mathrm{as}}^{L_a}=a\cdot ((x\cdot y)\cdot z)-a\cdot (x\cdot (y\cdot z))=a\cdot [x,y,z]_{\mathrm{as}}=0.
			\end{align*}
			It follows that $a\in AC_l(A)$. Commutation with $1_l$ follows from $a\in Z(A\cdot A)=Z(A)$, since $A$ is left-unital. This completes the proof.
		\end{proof}
		\begin{remark}
			Since $(A,1_l)$ is left-unital, $Z(A\cdot A)\cap N^l(A)\cap N^m(A)=Z(A)\cap N(A)$, and thus the subspace we call $HU_n^l(A)$ is no other than $AC(A)$ as defined in the previous section, all hom-unities in $HU_n^l(A)$ are then two-sided. The equality $HU_n^l(A)=AC(A)$ is broken when exploring non-unital algebras.
		\end{remark}
		
		The equality $AC_l(A)\cdot 1_l=HU_n^l(A)$ is not given. This is true for some associative algebras and for all two-sided unital algebras (see  \cref{thm:bijection_left_unital}\ref{item:HAstr_B4}).

		\begin{theorem}\label{thm:ACL_split}
			Let $(A,1_l)$ be a left-unital algebra. It holds that
			\begin{equation}\label{eqn:split-of-ACL}
				AC_l(A)=AC_l(A)\cdot 1_l\oplus Ann_A^l(A).
			\end{equation}
			Moreover, $AC_l(A)\cdot 1_l=AC_l(A)$ if and only if $Ann_A^l(A)=\{0\}$.
		\end{theorem}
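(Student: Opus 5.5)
The plan is to decompose each $a\in AC_l(A)$ as $a=a\cdot 1_l+(a-a\cdot 1_l)$. By \cref{lem:ACLcommutes}, the first summand lies in $AC_l(A)\cdot 1_l$, so what remains is to show that $a-a\cdot 1_l\in Ann_A^l(A)$, that $Ann_A^l(A)\subseteq AC_l(A)$, and that the sum is direct.

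\textbf{Step 1.} We show $a-a\cdot 1_l\in Ann_A^l(A)$. The proof of \cref{lem:ACLcommutes} already gives $L_{a1_l}=L_a$: contract $x\mapsto 1_l$ and then $y\mapsto 1_l$ in \ref{item:ACL2}. Hence $(a-a\cdot 1_l)\cdot z=0$ for all $z\in A$, so $a-a\cdot 1_l\in Ann_A^l(A)$.

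\textbf{Step 2.} We show $Ann_A^l(A)\subseteq AC_l(A)$. Let $n\in Ann_A^l(A)$, so $L_n=0$. Property \ref{item:ACL1} then reads $0=x\cdot 0$, which holds. For \ref{item:ACL2}, note that $n\cdot x=0$, so $L_{nx}=L_0=0$, while $L_n\circ L_{xy}=0$. Both sides vanish, so $n\in AC_l(A)$. Combined with Step 1 and the fact that $AC_l(A)$ is a subspace, this gives $AC_l(A)=AC_l(A)\cdot 1_l+Ann_A^l(A)$.

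\textbf{Step 3.} We show the sum is direct. Suppose $b\in AC_l(A)\cdot 1_l\cap Ann_A^l(A)$. By \cref{lem:ACLcommutes}, $b=b\cdot 1_l$, and since $b\in Ann_A^l(A)$, $b\cdot 1_l=0$. Hence $b=0$. This is the one point where the specific description $\{a\in AC_l(A)\mid a\cdot 1_l=a\}$ from \cref{lem:ACLcommutes} is needed; it is what makes the projection $a\mapsto a\cdot 1_l$ idempotent with kernel exactly $Ann_A^l(A)$.

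\textbf{Step 4.} For the equivalence, suppose first $Ann_A^l(A)=\{0\}$. Then \eqref{eqn:split-of-ACL} gives $AC_l(A)=AC_l(A)\cdot 1_l$. Conversely, if $AC_l(A)\cdot 1_l=AC_l(A)$, then by Step 2 $Ann_A^l(A)\subseteq AC_l(A)=AC_l(A)\cdot 1_l$, and directness (Step 3) forces $Ann_A^l(A)=\{0\}$. There is no real obstacle here; the only subtlety is Step 3, which relies on the idempotence-type identity $b\cdot 1_l=b$ for elements of $AC_l(A)\cdot 1_l$.
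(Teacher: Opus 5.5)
Your proof is correct and takes essentially the same route as the paper: the decomposition $a=a\cdot 1_l+(a-a\cdot 1_l)$ via $L_{a1_l}=L_a$, the trivial inclusion $Ann_A^l(A)\subseteq AC_l(A)$, and directness from $b\cdot 1_l=b$ for $b\in AC_l(A)\cdot 1_l$. Your Step 4 also spells out the ``moreover'' equivalence, which the paper leaves implicit.
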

		\begin{proof}
			First, both $AC_l(A)\cdot 1_l$ and $Ann_A^l(A)$ are subspaces of $AC_l(A)$, since any $a\in Ann_A^l(A)$ trivially makes $(A,L_a,1_l)$ hom-associative.
			Next, observe that the property $L_a=L_{a1_l}$ reads, for all $a\in AC_l(A)$, $L_{a1_l-a}=0$, or equi\-valently, $a\cdot 1_l-a\in Ann_A^l(A)$. By linearity, $a\cdot 1_l\in Ann_A^l(A)$ if and only if $a\in Ann_A^l(A)$. But then $a\cdot 1_l=0$. If $a\in AC_l(A)\cdot 1_l$, then it commutes with $1_l$, which implies $a=1_l\cdot a=a\cdot 1_l=0$. That is, $AC_l(A)\cdot 1_l$ and $Ann_A^l(A)$ are disjoint subspaces. Finally, we can write every $a\in AC_l(A)$ uniquely as
			\begin{equation*}
				a=a\cdot 1_l+(a-a\cdot 1_l)\in AC_l(A)\cdot 1_l\oplus Ann_A^l(A).
			\end{equation*}
			The sum is direct. This completes the proof.
		\end{proof}
		\begin{example}
			The unity $1_{lr}$ of any two-sided unital algebra is a two-sided regular element. We then have $Ann_A^l(A)=\{0\}$, and thus $AC_l(A)\cdot 1_{lr}=AC_l(A)$.
		\end{example}
		
		The two-sided unital $AC(A)$ is a subalgebra of $A$. It is then highly desirable to find a subspace of $AC_l(A)$ that is also a subalgebra.
		\begin{remark}
			In \cref{thm:ACL1_subalgebra}, we show that $AC_l(A)\cdot 1_l$ is indeed a subalgebra of $A$. This is key to many of the results in this section.
		\end{remark}
		
		Left-unital associative algebras can be seen as hom-unital hom-associative algebras with hom-unity $1_l$. That is, $1_l\in AC_l(A)$ if and only if $(A,1_l)$ is associative.

		\begin{theorem}\label{thm:bijection_left_unital}
			Let $(A,1_l)$ be a left-unital algebra. The following statements are true:
			\begin{enumerate}[label=\textnormal{(B\arabic*):},ref=\textnormal{(B\arabic*)}]
				\item\label{item:HAstr_B1} The subalgebra $Twist(A)$ of all HA-compatible maps on $(A,1_l)$ is in bijection with the commutative associative subalgebra
				\begin{equation*}
					AC_l(A)\cdot 1_l=\{a\in AC_l(A)\mid a\cdot 1_l=a\}.
                \end{equation*}
				The bijection is given by $\phi$ and $\psi\vert_{AC_l(A)\cdot 1_l}$, where:
				\begin{align*}
					\begin{array}{ccc}
						\phi:Twist(A)& \to &   A    \\
						\alpha & \mapsto & \alpha(1_l)
					\end{array}\quad  ,
					&&
					\begin{array}{ccc}
						\psi:AC_l(A)& \to &  \mathcal{L}(A)   \\
						a & \mapsto & L_a\colon x\mapsto a\cdot x.
					\end{array}
				\end{align*}
				\item\label{item:HAstr_B2} For every $a\in AC_l(A)$,  $(A,L_a,1_l)$ is hom-associative.
				\item\label{item:HAstr_B3} Multiplicative HA-compatible maps on $(A,1_l)$ are in bijection with the subset $E(A)\cap AC_l(A)\cdot 1_l$ of idempotents in $AC_l(A)\cdot 1_l$.
				\item\label{item:HAstr_B4} If $(A,1_l)$ is moreover associative, then HA-compatible maps of $(A,1_l)$ are in bijection with the commutative associative subalgebra
				\begin{equation}
					AC_l(A)=\{a\in A \mid a\cdot x\cdot z=x\cdot a\cdot z\ \forall x,z\in A \}.
				\end{equation}
				The bijection is given by $a\leftrightarrow L_a$. Moreover, $AC_l(A)$ satisfies the following properties.
				\begin{enumerate}[label=\textnormal{\arabic*)}]
					\item\label{item:HAstr_B41} $Z(A)\subseteq AC_l(A)$.
					\item\label{item:HAstr_B42} If $(A,1_l)$ is two-sided unital, a domain or commutative, then it holds that $AC_l(A)=Z(A)=AC_l(A)\cdot 1_l=HU_n^l(A)$.
				\end{enumerate}
			\end{enumerate}
		\end{theorem}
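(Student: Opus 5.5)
We prove the four items in the order (B2), (B1), (B3), (B4), each time reducing to \cref{lem:unit-fundamental}, \cref{lem:ACLcommutes}, \cref{thm:unital_multiplicativity} and \cref{thm:ACL_split}.

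\textbf{(B2).} Let $a\in AC_l(A)$. For any $x,y,z$ we compute
\[
(xy)(az)\overset{\ref{item:ACL1}}{=}a((xy)z)\overset{\ref{item:ACL2}}{=}(ax)(yz).
\]
Hence $[x,y,z]^{L_a}_{\mathrm{as}}=0$.

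\textbf{(B1).} Let $\alpha\in Twist(A)$.
\begin{enumerate}
\item By \cref{lem:unit-fundamental} we have $\alpha=L_{\alpha(1_l)}$, and table property $\alpha(xz)=x\alpha(z)$ gives \ref{item:ACL1} for $\alpha(1_l)$.
\item Hom-associativity then gives \ref{item:ACL2}: $(\alpha(1_l)x)(yz)=(xy)\alpha(z)=\alpha((xy)z)=\alpha(1_l)((xy)z)$. So $\alpha(1_l)\in AC_l(A)$.
\item Finally $\alpha(1_l)1_l=\alpha(1_l)$ by \cref{lem:unit-fundamental}(v). Thus $\phi$ maps $Twist(A)$ into $AC_l(A)\cdot 1_l$, using the characterization in \cref{lem:ACLcommutes}.
\end{enumerate}
For the inverse direction, $\psi$ maps $AC_l(A)\cdot 1_l$ into $Twist(A)$ by (B2). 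The two compositions are identities:
\begin{itemize}
\item $\psi\phi(\alpha)=L_{\alpha(1_l)}=\alpha$;
\item $\phi\psi(a)=a\cdot 1_l=a$.
\end{itemize}

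\textbf{Subalgebra claim.} This is the step I expect to be the main obstacle: showing that $AC_l(A)\cdot 1_l$ is a commutative associative subalgebra. Take $a,b\in AC_l(A)\cdot 1_l$.
\begin{itemize}
\item \emph{Closure.} $L_aL_b$ is a composition of HA-compatible multiplication operators commuting with all $L_x$, and $L_aL_b(x)=a(bx)$. First I would show $L_{ab}=L_aL_b$: by \ref{item:ACL2} for $a$ with $x=b,y=1_l$, we get $(ab)(1_lz)=a((b1_l)z)$, i.e. $(ab)z=a(bz)$. Then \ref{item:ACL1} for $ab$ follows from \ref{item:ACL1} for $a$ and $b$. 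For \ref{item:ACL2}, write $(abx)(yz)=a(b x)(yz)$ and apply \ref{item:ACL2} twice. Lastly $(ab)1_l=a(b1_l)=ab$.
\item \emph{Commutativity.} Apply \ref{item:ACL1} to $a$ with $x=b$, $z=1_l$: $a(b1_l)=b(a1_l)$, i.e. $ab=ba$.
\item \emph{Associativity.} This follows from $L_{ab}=L_aL_b$, applied to $c$.
\end{itemize}

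\textbf{(B3).} Multiplicativity of $L_a$ is equivalent to $a^2=a$ by \cref{thm:unital_multiplicativity}, since $a=\alpha(1_l)$. Now restrict the bijection of (B1).

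\textbf{(B4).} Under associativity, \ref{item:ACL2} is automatic, because $(ax)(yz)=a(x(yz))=a((xy)z)$. So $AC_l(A)$ reduces to the stated set, and it is a subalgebra by the same closure argument. Since $1_l$ is then in $AC_l(A)$ only if it is central-like, I must also check that the bijection uses $AC_l(A)$ rather than $AC_l(A)\cdot 1_l$:
\begin{itemize}
\item By \cref{thm:ACL_split}, $a$ and $a1_l$ give the same $L_a$.
\item So I would argue the correspondence is $a\leftrightarrow L_a$ up to $Ann^l_A(A)$.
\item In the associative case, elements of $Ann^l_A(A)$ lie in $AC_l(A)$ and give the zero map. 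Here I would need care, possibly interpreting the bijection modulo the annihilator or noting $Ann^l_A(A)$ is trivial in the listed subcases.
\end{itemize}

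\textbf{Items 1) and 2) of (B4).}
\begin{itemize}
\item $Z(A)\subseteq AC_l(A)$ since $a(xz)=(ax)z=(xa)z=x(az)$.
\item If $A$ is two-sided unital, set $z=1$ to get $ax=xa$.
\item If $A$ is a domain, $(ax-xa)z=0$ gives centrality by regularity.
\item If $A$ is commutative, centrality is trivial.
\end{itemize}
In each case $Ann^l_A(A)=0$. By \cref{thm:ACL_split} and the remark identifying $HU_n^l(A)$ with $Z(A)\cap N(A)\cap\cdots=Z(A)$ for associative $A$, all the stated sets coincide.
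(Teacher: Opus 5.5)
Your bijection argument for (B1), and your proofs of (B2), (B3), of $L_{ab}=L_a\circ L_b$ (L2 for $a$ at $x=b$, $y=1_l$), of $ab=ba$ and of associativity, are correct. The bijection is essentially the paper's argument, while the paper takes the subalgebra claim from \cref{thm:ACL1_subalgebra}. The gap is your check of L2 for $ab$ by ``applying L2 twice''.

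L2 for $a$ turns $((ab)x)(yz)=(a(bx))(yz)$ into $a(((bx)y)z)$. L2 for $b$ turns the target $(ab)((xy)z)=a(b((xy)z))$ into $a((bx)(yz))$. So you still need $a\cdot[bx,y,z]_{\mathrm{as}}=0$. Rewriting $((bx)y)z$ by L2 for $b$ only gives $b\bigl((((x1_l)y)1_l)z\bigr)$, with stray factors of $1_l$ that nothing in your argument removes.

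You also cannot discard them by associating $1_l$. The relation $[a,x,1_l]_{\mathrm{as}}=0$ of Remark~\ref{rem:ACL1_assoc}, on which the paper's route leans, fails in general. For instance, take characteristic $\neq 2$ and the span of $1_l,a,u$, with $1_l$ a left unity and $a1_l=a$, $a^2=0$, $au=a$, $u1_l=-u$, $ua=-a$, $u^2=-1_l$. One checks $a\in AC_l(A)\cdot 1_l$, yet $(au)1_l=a\neq -a=a(u1_l)$.

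The missing ingredient is an absorption identity for $ab$. Setting $y=1_l$ in L2 for $b$ gives $L_{bx}=L_b\circ L_{x1_l}$, and likewise for $a$. Then L2 for $a$ at $x=b$ gives $(ab)(yz)=a((by)z)=a(b((y1_l)z))=(ab)((y1_l)z)$, that is,
\[
L_{ab}\circ L_{y1_l}=L_{ab}\circ L_y\qquad\text{for all } y\in A.
\]
Since $(bx)1_l=b((x1_l)1_l)$, this gives
\[
L_{(ab)x}=L_{a(bx)}=L_a\circ L_{(bx)1_l}=L_a\circ L_b\circ L_{((x1_l)1_l)1_l}=L_{ab}\circ L_x .
\]
Also
\[
L_{ab}\circ L_{xy}=L_a\circ L_{bx}\circ L_y=L_{ab}\circ L_{x1_l}\circ L_y=L_{ab}\circ L_x\circ L_y .
\]
Together these give $L_{(ab)x}\circ L_y=L_{ab}\circ L_{xy}$, which is L2 for $ab$.

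On (B4) your hesitation is justified: the defect is in the statement, not in your method. The map $a\mapsto L_a$ sends $AC_l(A)$ onto $Twist(A)$ with kernel $Ann_A^l(A)$. The paper's proof only establishes well-definedness and surjectivity, not injectivity.

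For a counterexample, take the associative algebra of $2\times 2$ matrices with zero second row and $1_l=E_{11}$. Here $XY=x_{11}Y$, so $AC_l(A)=A$, which is not commutative. Moreover $Ann_A^l(A)=\mathbb{F}E_{12}$ and $Twist(A)=\mathbb{F}\,\mathrm{id}_A$.

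So the first claim of (B4) holds only with $AC_l(A)\cdot 1_l\cong AC_l(A)/Ann_A^l(A)$ in place of $AC_l(A)$, i.e.\ it reduces to (B1). Your proofs of 1) and 2) are fine; in the cases of 2), indeed $Ann_A^l(A)=\{0\}$.
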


		\begin{proof}
			To prove \ref{item:HAstr_B1} and \ref{item:HAstr_B2}, that is, the bijection between $\phi$ and a restriction of $\psi$, we show that they are bijective and mutually inverse, that is:
			\begin{enumerate}
				\item $(\psi\circ\phi)(\alpha)=\alpha$ for all $\alpha\in Twist(A)$.
				\item $(\phi\circ\psi)(a)=a$ for all $a\in AC_l(A)\cdot 1_l$.
			\end{enumerate}
			Both of these fail if the corresponding map fails to be injective.
			
			Applying $\psi$ to $a\in AC_l(A)$, it follows that:
			\begin{enumerate}
				\item $(A,L_a,1_l)$ is hom-associative. That is, $\psi(a)=L_a\in Twist(A)$.
				\item $(\phi\circ\psi)(a)=\phi(L_a)=L_a(1_l)=a\cdot 1_l\in AC_l(A)\cdot 1_l$.
				\item The zero map is always HA-compatible, and thus
				\begin{equation*}
					\ker\psi=\{a\in AC_l(A)\mid L_a=0\}=Ann_ A ^l(A)\cap AC_l(A)=Ann_ A ^l(A).
				\end{equation*}
				
			\end{enumerate}
			Using the last property in \cref{tab:product_relations_one}, for $a\in AC_l(A)$ we have $L_{a1_l}=L_a$, that is, $\psi(a1_l)=\psi(a)$. By \cref{thm:ACL_split}, if we restrict $\psi$ to $AC_l(A)\cdot 1_l$ we obtain
			\begin{equation*}
				\ker\psi\vert_{AC_l(A)1_l}=\{a\in AC_l(A)\cdot 1_l\mid L_a{=}0\}=AC_l(A)\cdot 1_l\cap Ann_ A ^l(A)=\{0\}.
			\end{equation*}
			That is, $\psi\vert_{AC_l(A)\cdot 1_l}$ is injective. Moreover, for any $a\in AC_l(A)\cdot 1_l$ it holds that $(\phi\circ\psi)(a)=\phi(L_a)=a\cdot 1_l=a$.

			Let $\alpha\in\ker\phi$. By the hom-associativity of $(A,\alpha,1_l)$ we have $\alpha=L_{\alpha(1_l)}=L_0=0$. It follows that $\phi$ is injective.

			Now, let $a\in\phi(Twist(A))$. Then, $a=\alpha(1_l)$ for some $\alpha$ such that $(A,\alpha,1_l)$ is hom-associative. We have the following:
			\begin{enumerate}
				\item $L_{xy}\circ L_a=L_{ax}\circ L_y$ \textbf{(hom-associativity).}
				\item $x\cdot (a\cdot y)=x\cdot (\alpha(1_l)\cdot y)=x\cdot \alpha(y)=\alpha(x\cdot y)=\alpha(1_l)\cdot (x\cdot y){=}a\cdot (x\cdot y)$ that is, $L_x\circ L_a=L_a\circ L_x$ \textbf{(Property \ref{item:ACL1}).}
				\item Putting the previous two together, $L_a\circ L_{xy}=L_{ax}\circ L_y$ \textbf{(Property \ref{item:ACL2})}.
				\item By left unitality (see \cref{tab:product_relations_one}), we have $[a,1_l]=0$, that is, $a\cdot 1_l=a$.
			\end{enumerate}
			From these properties, it follows that $a\in AC_l(A)\cdot 1_l$. Now, by left unitality, it holds that $(\psi\circ\phi)(\alpha)=\psi(\alpha(1_l))=L_{\alpha(1_l)}=L_a$. We have that $a\leftrightarrow L_a$ is indeed an isomorphism between $Twist(A)$ and $AC_l(A)\cdot 1_l$.
			
			Finally, that $Twist(A)$ is a subalgebra of $\mathcal{L}(A)$ follows from the multiplicativity of $\phi$ and $\psi$ and $AC_l(A)\cdot 1_l$ being itself a subalgebra of $A$. Indeed:
			\begin{enumerate}[label=,leftmargin=*]
				\item \textbf{Multiplicativity of $\phi\colon$}
				\begin{equation*}
					\phi(\alpha\circ\alpha')=(\alpha\circ\alpha')(1_l)=\alpha(\alpha'(1_l))=\alpha(1_l)\cdot\alpha'(1_l)=\phi(\alpha)\cdot \phi(\alpha').
				\end{equation*}
				
				\item\textbf{Multiplicativity of $\psi\colon$} For all $a,a' \in AC_l(A)$ we have:
				\vspace{-0.25cm}
				\begin{multline*}
					\psi(a\cdot a')(1_l\cdot x)=L_{aa'}(1_l\cdot x)=(a\cdot a')\cdot (1_l\cdot x)=(a'\cdot 1_l)\cdot (a\cdot x)\\
					=(1_l\cdot a)\cdot (a'\cdot x)
					=a\cdot (a'\cdot x)=(L_{a}\circ L_{a'})(x)=\left(\psi(a)\circ\psi(a')\right)(x).
				\end{multline*}
			\end{enumerate}

			To prove \ref{item:HAstr_B3}, observe that, by \ref{item:HAstr_B1}, any HA-compatible map is multiplication by an element $a\in AC_l(A)\cdot 1_l$. By \cref{thm:unital_multiplicativity}, such a map $L_a$ is multiplicative if and only if $a$ is idempotent.

			Lastly, in order to prove \ref{item:HAstr_B4}, observe firstly that Property \ref{item:ACL2} is a consequence of Property \ref{item:ACL1}. Indeed, $(a\cdot x)\cdot (y\cdot z)=a\cdot x\cdot y\cdot z=a\cdot ((x\cdot y)\cdot z)$. It follows that
			\begin{equation*}
				AC_l(A)=\{a\in A\mid a\cdot x\cdot z=x\cdot a\cdot z,\ \forall x,z\in A\}.
			\end{equation*}
			Let $a\in AC_l(A)$. Then, for all $x,y,z\in A$, we have
			\begin{equation*}
				L_a(x)\cdot (y\cdot z)=(a\cdot x)\cdot (y\cdot z)=a\cdot (x\cdot y)\cdot z=(x\cdot y)\cdot a\cdot z=(x\cdot y)\cdot L_a(z).
			\end{equation*}
			Thus, $(A,L_a,1_l)$ is hom-associative. Conversely, let $\alpha$ be HA-compatible, that is, let $(A,\alpha,1_l)$ be hom-associative. Then,
			\begin{enumerate}
				\item By left unitality (see \cref{tab:product_relations_one}), $\alpha=L_{a}$, where $a=\alpha(1_l)$ .
				\item By hom-associativity, $(a\cdot x)\cdot y\cdot z=x\cdot y\cdot (a\cdot z)$. By contracting $x\mapsto 1_l$, we obtain $a\cdot y\cdot z=y\cdot a\cdot z$, that is, $a\in AC_l(A)$.
			\end{enumerate}
			
            Lastly, we prove \ref{item:HAstr_B41} and \ref{item:HAstr_B42}. Let $(A,1_l)$ be associative and $a\in Z(A)$. For all $x,z\in A$, element $a$ satisfies
			\begin{equation}\label{eqn:ZACl_associative}
				a\cdot x=x\cdot a\Rightarrow (a\cdot x-x\cdot a)\cdot z=0\Rightarrow a\cdot x\cdot z=x\cdot a\cdot z\Rightarrow a\in AC_l(A).
			\end{equation}
			Thus, \ref{item:HAstr_B41} is proved. If $(A,1_l)$ is an associative domain, then the reciprocal is also true by the cancellation law. 	
			If $(A,1_l)$ is two-sided unital, then by the construction of $AC(A)$ (see \cite{FreGohr:UnitCond}), we have $HU_n^l(A)=AC(A)\subseteq Z(A)$. If $(A,1_l)$ is also associative, then applying \eqref{eqn:ZACl_associative} yields $HU_n^l(A)=AC(A)=Z(A)$. If $(A,1_l)$ is commutative, then it is two-sided unital. This proves \ref{item:HAstr_B42} and completes the proof.

		\end{proof}
		
		\begin{remark}
			Observe that $E(A)$ is not a linear subspace: for $e_1,\dots,e_n\in E(A)$, we have
			\begin{equation*}
				(e_1+\dots+e_n)^2=\sum\limits_{i=1}^n e_i + \sum\limits_{i\neq j}e_ie_j=\sum\limits_{i=1}^n e_i + \sum\limits_{i< j}\{e_i,e_j\},
			\end{equation*}
			where $\{e_i,e_j\}=e_ie_j+e_je_i$ is an anti-commutator. Then $(e_1+\dots+e_n)^2=e_1+\dots+e_n$ if and only if $\sum\limits_{i< j}\{e_i,e_j\}=0$. This can happen if the $e_i$ anti-commute, but in the general case, $span(E(A))$ is not entirely made of idempotents. This supports the intuitive idea that a linear combination of multiplicative linear maps, given by the product of their coordinate matrices, need not be multiplicative.
		\end{remark}

		\subsection{Element-wise relations in \texorpdfstring{$AC_l(A)$}{AC\_l(A)}}\label{ssec:relations-NA-ACL}
		
		The subspace $AC_l(A)$ satisfies several commutation and association relations as a consequence of the hom-associativity axiom.

		\begin{lemma}\label{lem:ACL1_properties_A}
			Every $a\in AC_l(A)$ satisfies, for all $x,y,z\in A$, the properties listed in \cref{tab:ACL}.
			\begin{table}[H]
				\centering
				\caption{Elementary properties of elements in $AC_l(A)$.}
				\label{tab:ACL}
				\begin{NiceTabular}{|>{\ifnum\value{iRow}=0\else \refstepcounter{propertycounter}L\arabic{propertycounter}\fi}c|cc|}[first-row]
					& Element form & Operator form \\
					\hline
					\label{item:ACL1} & $a\cdot (x\cdot y)=x\cdot (a\cdot y)$ & $L_a\circ L_x=L_x\circ L_a$ \\ \hline
					\label{item:ACL2} & $(a\cdot x)\cdot (y\cdot z)=a\cdot ((x\cdot y)\cdot z)$ & $L_{ax}\circ L_y=L_a\circ L_{xy}$ \\ \hline
					\label{item:ACL3} & $(a\cdot 1_l)\cdot x=a\cdot x$ & $L_{a1_l-a}=0$ \\ \hline
					\label{item:ACL4} & $(a\cdot 1_l)\cdot 1_l=a\cdot 1_l$ & $[a1_l,1_l]=0$ \\ \hline
					\label{item:ACL1R} & $a\cdot (x\cdot 1_l)=x\cdot (a\cdot 1_l)$ & $L_a\circ R_{1_l}=R_{a1_l}$ \\ \hline
					\label{item:ACL2RR} & $(a\cdot x)\cdot (y\cdot z)=a\cdot ((x\cdot y)\cdot z)$ & $L_{ax}\circ R_z=L_a\circ R_z\circ L_x$ \\ \hline
					\label{item:ACL3RR} & $(a\cdot x)\cdot 1_l=a\cdot (x\cdot 1_l)$ &  $R_{1_l}\circ L_a=L_a\circ R_{1_l}$ \\ \hline
					\label{item:ACL4RR} & $x\cdot (a\cdot 1_l)=(x\cdot 1_l)\cdot (a\cdot 1_l)=a\cdot (x\cdot 1_l)$ & $R_{a1_l}=R_{a1_l}\circ R_{1_l}=L_a\circ R_{1_l}$ \\ \hline
				\end{NiceTabular}
			\end{table}
		\end{lemma}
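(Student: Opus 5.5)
The plan is to derive every row of \cref{tab:ACL} from the two defining identities \ref{item:ACL1} and \ref{item:ACL2} of $AC_l(A)$ together with left unitality $1_l\cdot y=y$. Throughout, I would ``contract'' a variable to $1_l$ in an identity already established, as in the proof of \cref{lem:ACLcommutes}. Rows \ref{item:ACL1} and \ref{item:ACL2} are the definition. Row \ref{item:ACL2RR} is the same element identity as \ref{item:ACL2}, and its operator form follows by evaluating both sides at $y$: $L_{ax}(R_z(y))=(a\cdot x)\cdot(y\cdot z)$ and $L_a(R_z(L_x(y)))=a\cdot((x\cdot y)\cdot z)$.

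Next I would prove \ref{item:ACL3} and \ref{item:ACL4}, as in \cref{lem:ACLcommutes}. Setting $x=1_l$ in \ref{item:ACL2} gives $(a\cdot 1_l)\cdot(y\cdot z)=a\cdot(y\cdot z)$, and then $y=1_l$ gives $(a\cdot 1_l)\cdot z=a\cdot z$, i.e. $L_{a1_l-a}=0$. Taking $z=1_l$ in \ref{item:ACL3} gives \ref{item:ACL4}. Row \ref{item:ACL1R} is \ref{item:ACL1} with $y=1_l$, namely $a\cdot(x\cdot 1_l)=x\cdot(a\cdot 1_l)$; as operators this reads $L_a\circ R_{1_l}=R_{a1_l}$.

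For \ref{item:ACL3RR} and \ref{item:ACL4RR}, a first useful identity comes from putting $y=1_l$ in \ref{item:ACL2}:
\begin{equation*}
(a\cdot x)\cdot z=a\cdot((x\cdot 1_l)\cdot z)\quad\text{for all } x,z\in A.
\end{equation*}
With $z=1_l$, and using \ref{item:ACL1} to move $a$ past $x\cdot 1_l$, this gives
\begin{equation*}
(a\cdot x)\cdot 1_l=a\cdot((x\cdot 1_l)\cdot 1_l)=(x\cdot 1_l)\cdot(a\cdot 1_l).
\end{equation*}
By \ref{item:ACL1R} we also have $a\cdot(x\cdot 1_l)=x\cdot(a\cdot 1_l)$. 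Hence both \ref{item:ACL3RR} and the middle equality of \ref{item:ACL4RR} reduce to the single identity
\begin{equation*}
a\cdot((x\cdot 1_l)\cdot 1_l)=a\cdot(x\cdot 1_l),\qquad\text{that is,}\qquad a\cdot[x,1_l,1_l]_{\mathrm{as}}=0 .
\end{equation*}
The remaining equalities in those rows then follow by the rewritings above, and the operator forms are immediate.

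This annihilation step is the main obstacle; everything else is a one-line contraction. My first attempt would be to replace $a$ by $b=a\cdot 1_l$, which lies in $AC_l(A)$ by \cref{lem:ACLcommutes} and satisfies $L_b=L_a$ by \ref{item:ACL3}. I would then play \ref{item:ACL2} for $b$ with the substitutions $(x,y,z)=(x,1_l,1_l)$ and $(x,y,z)=(x\cdot 1_l,1_l,1_l)$ against \ref{item:ACL1} for $b$, aiming to show that $b\cdot((x\cdot 1_l)\cdot 1_l)$ and $b\cdot(x\cdot 1_l)$ both equal $(b\cdot x)\cdot 1_l$. If these substitutions only reproduce \ref{item:ACL1R} (a circularity I expect may occur), I would next try rewriting $x\cdot 1_l$ as $1_l\cdot(x\cdot 1_l)$ and applying \ref{item:ACL2} with first argument $1_l$. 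That rewriting can move an extra $1_l$ to the right, which is exactly what is needed to absorb the associator $[x,1_l,1_l]_{\mathrm{as}}$.
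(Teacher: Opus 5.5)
\paragraph{The missing step cannot be proved.}
Your derivations of rows \ref{item:ACL1}--\ref{item:ACL2RR} are correct, and so is the outer equality $x\cdot(a\cdot 1_l)=a\cdot(x\cdot 1_l)$ in \ref{item:ACL4RR}. Your reduction is also correct: \ref{item:ACL3RR} and the middle equality of \ref{item:ACL4RR} both come down to the single identity $a\cdot[x,1_l,1_l]_{\mathrm{as}}=0$. The gap is that you never prove this identity, and it does not follow from \ref{item:ACL1}, \ref{item:ACL2} and left unitality.

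Take the algebra over any field with basis $1_l,a,u,v,w$, whose only nonzero products of basis vectors are
\begin{equation*}
1_l\cdot t=t\ \text{ for every basis vector } t,\qquad a\cdot 1_l=a,\qquad a\cdot v=w,\qquad u\cdot 1_l=v,\qquad u\cdot a=w.
\end{equation*}
Then:
\begin{itemize}
\item $L_v=L_w=0$;
\item $L_a$ sends $1_l\mapsto a$ and $v\mapsto w$, and kills $a,u,w$;
\item $L_u$ sends $1_l\mapsto v$ and $a\mapsto w$, and kills $u,v,w$.
\end{itemize}

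For \ref{item:ACL1}, the cases $x\in\{1_l,a,v,w\}$ are trivial. Both $L_a\circ L_u$ and $L_u\circ L_a$ send $1_l\mapsto w$ and kill the other basis vectors, so \ref{item:ACL1} holds.

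For \ref{item:ACL2}, use the form $L_{at}\circ L_s=L_a\circ L_{ts}$.
\begin{itemize}
\item If $t=1_l$, both sides equal $L_a\circ L_s$.
\item If $t\neq 1_l$, then $a\cdot t\in\{0,w\}$, so the left side is $0$. Also $t\cdot s\in\mathrm{span}\{a,v,w\}$ and $L_a\circ L_a=L_v=L_w=0$, so the right side is $0$ as well.
\end{itemize}

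Hence $a\in AC_l(A)$, $a\cdot 1_l=a$, and $(A,L_a,1_l)$ is even hom-associative. Nevertheless
\begin{equation*}
(a\cdot u)\cdot 1_l=0\neq w=a\cdot(u\cdot 1_l),\qquad (u\cdot 1_l)\cdot(a\cdot 1_l)=v\cdot a=0\neq w=u\cdot(a\cdot 1_l),
\end{equation*}
that is, $a\cdot[u,1_l,1_l]_{\mathrm{as}}=-w\neq 0$. So rows \ref{item:ACL3RR} and \ref{item:ACL4RR} are false as stated. None of your fallback plans can close the gap: passing to $b=a\cdot 1_l$ gives $a$ itself here, and inserting extra copies of $1_l$ cannot produce a false identity.

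\paragraph{Comparison with the paper.}
Your diagnosis is sharper than the paper's proof, which breaks at exactly this point. The paper correctly contracts hom-associativity to
\begin{equation*}
R_{1_l}\circ L_a=L_a\circ R_{1_l}\circ R_{1_l}=R_{a1_l}\circ R_{1_l},
\end{equation*}
which is your identity $(a\cdot x)\cdot 1_l=a\cdot((x\cdot 1_l)\cdot 1_l)=(x\cdot 1_l)\cdot(a\cdot 1_l)$. It then asserts $R_{a1_l}\circ R_{1_l}=R_{a1_l}$, citing \ref{item:ACL4}. But \ref{item:ACL4} only says $R_{1_l}(a\cdot 1_l)=a\cdot 1_l$. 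It says nothing about $(x\cdot 1_l)\cdot(a\cdot 1_l)$ versus $x\cdot(a\cdot 1_l)$, and in the example $R_{a1_l}\circ R_{1_l}(u)=0\neq w=R_{a1_l}(u)$.

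What your argument does establish is the correct replacement: for $a\in AC_l(A)$, rows \ref{item:ACL3RR} and \ref{item:ACL4RR} hold if and only if $a\cdot[A,1_l,1_l]_{\mathrm{as}}=\{0\}$. This is automatic when $(x\cdot 1_l)\cdot 1_l=x\cdot 1_l$ for all $x$, for instance if $A$ is associative or right alternative.

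The error also propagates. The same example, with $b=a$ and $x=u$, violates the rows $b\cdot(x\cdot 1_l)=(x\cdot 1_l)\cdot b$ and $x\cdot b=(x\cdot 1_l)\cdot b$ of \cref{tab:ACL_1}.
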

		
		\begin{proof}
			Properties \ref{item:ACL1} and \ref{item:ACL2} are the definition of $AC_l(A)$.
			
			Property \ref{item:ACL3} is an immediate consequence of \ref{item:ACL2}: contracting $x\mapsto 1_l$ yields
			\begin{equation*}
				(a\cdot 1_l)\cdot(y\cdot z)=a\cdot ((1_l\cdot y)\cdot z)=a\cdot (y\cdot z).
			\end{equation*}
			
			Property \ref{item:ACL4} follows from contracting $y,z\mapsto 1_l$ in \ref{item:ACL3}.
			
			Contracting $y\mapsto 1_l$ in \ref{item:ACL1} yields \ref{item:ACL1R}: $L_a\circ R_y=R_{ay}\overset{y\mapsto 1_l}{\longrightarrow}L_a\circ R_{1_l}=R_{a1_l}$.
			
			Rewriting  \ref{item:ACL2} in terms of
			left and right multiplication operators yields \ref{item:ACL2RR}:
			\begin{align*}
				(L_{ax}\circ L_y)(z)&=(a\cdot x)\cdot (y\cdot z)=(L_{ax}\circ R_z)(y)\\
				(L_a\circ L_{xy})(z)&=a\cdot ((x\cdot y)\cdot z)=(L_a\circ R_z\circ L_x)(y)
			\end{align*}
			
			Properties \ref{item:ACL3RR} and \ref{item:ACL4RR} are obtained by rewriting \ref{item:ACL2RR} using left and right multiplication operators:
			\begin{multline*}
				(a\cdot x)\cdot (y\cdot z)=a\cdot ((x\cdot y)\cdot z)=(x\cdot y)\cdot (a\cdot z)\\
				\overset{z\mapsto 1_l}{\longrightarrow} (a\cdot x)\cdot (y\cdot 1_l)=a\cdot ((x\cdot y)\cdot 1_l)=(x\cdot y)\cdot (a\cdot 1_l)\\
				\longrightarrow R_{y1_l}\circ L_a=L_a\circ R_{1_l}\circ R_y=R_{a1_l}\circ R_y\\
				\overset{y\mapsto 1_l}{\longrightarrow} R_{1_l}\circ L_a=L_a\circ R_{1_l}\circ R_{1_l}=R_{a1_l}\circ R_{1_l}\overset{\ref{item:ACL4}}{=}R_{a1_l}\overset{\ref{item:ACL1R}}{=}L_a\circ R_{1_l}.
			\end{multline*}
			By grouping terms, we can read this equality in different ways, namely  $R_{1_l}\circ L_a=L_a\circ R_{1_l}$ (Property \ref{item:ACL3RR}) and $R_{a1_l}=R_{a1_l}\circ R_{1_l}=L_a\circ R_{1_l}$ (Property \ref{item:ACL4RR}).
		\end{proof}
		\begin{remark}\label{rem:ACL1_assoc}
			All these properties can be stated in terms of elements instead of multiplication operators, yielding an array of product relations. We are particularly interested in \textnormal{\ref{item:ACL3RR}}, which reads $[a,x,1_l]_{\mathrm{as}}=0$ for all $x\in A$.
		\end{remark}

		Observe that properties \ref{item:ACL3}, \ref{item:ACL4}, \ref{item:ACL1R} and \ref{item:ACL4RR} are written in terms of the element $a\cdot 1_l$. This is quite natural, as shown in \cref{thm:ACL_split}. Property \ref{item:ACL2RR} can also be contracted into one of these.

		\begin{proposition}\label{prop:ACL1_properties_b}
			Every element $b\in AC_l(A)\cdot 1_l$ satisfies, for all $b'\in AC_l(A)\cdot 1_l$ and $x\in A$, the properties in \cref{tab:ACL_1}.
			\begin{table}[ht!]
				\centering
				\caption{Properties of elements in $AC_l(A)\cdot 1_l$.}
				\label{tab:ACL_1}
				\begin{NiceTabular}{|>{\ifnum\value{iRow}=0\else \refstepcounter{propertycounter}L\arabic{propertycounter}\fi}c|cc|}[first-row]
					& Element form & Operator form \\
					\hline\label{item:ACL1com1x} & $b\cdot (x\cdot 1_l)=x\cdot b$ & $L_b\circ R_{1_l}=R_b$ \\ \hline
					\label{item:ACL1comx1} & $b\cdot (x\cdot 1_l)=(x\cdot 1_l)\cdot b$ & $L_b\circ R_{1_l}=R_b\circ R_{1_l}$ \\ \hline
					\label{item:ACL1pseudoCom} & $(b\cdot x)\cdot 1_l=(x\cdot 1_l)\cdot b$ & $R_{1_l}\circ L_b=R_b\circ R_{1_l}$ \\ \hline
					\label{item:ACL1x1inv} & $x\cdot b=(x\cdot 1_l)\cdot b$ & $R_b\circ R_{1_l}=R_b$ \\ \hline
					\label{item:ACL1_as_R} & $[b,b',x]_{\mathrm{as}}=0$ & $L_{bb'}=L_b\circ L_{b'}$ \\ \hline
					\label{item:ACL1_as_M} & $[b,x,b']_{\mathrm{as}}=0$ & $L_b\circ R_{b'}=R_{b'} \circ L_b$ \\ \hline
					\label{item:ACL1_as_L} & $[x,b,b']_{\mathrm{as}}=0$ & $R_{bb'}=R_{b'}\circ R_b$ \\ \hline
				\end{NiceTabular}
			\end{table}
			
		\end{proposition}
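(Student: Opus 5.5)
The plan is to reduce every entry of \cref{tab:ACL_1} to the relations of \cref{lem:ACL1_properties_A}. By \cref{lem:ACLcommutes}, every $b\in AC_l(A)\cdot 1_l$ lies in $AC_l(A)$ and satisfies $b\cdot 1_l=b$. So each property \ref{item:ACL1}--\ref{item:ACL4RR} holds with $a=b$, and every occurrence of $a\cdot 1_l$ may be replaced by $b$. The same holds for $b'$.

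\textbf{First four rows.}
\begin{itemize}
\item Property \ref{item:ACL1R} with $a=b$ reads $b\cdot(x\cdot 1_l)=x\cdot b$. This is \ref{item:ACL1com1x}.
\item Property \ref{item:ACL4RR} with $a=b$ reads $x\cdot b=(x\cdot 1_l)\cdot b=b\cdot(x\cdot 1_l)$. This gives \ref{item:ACL1x1inv}, and combined with \ref{item:ACL1com1x} it gives \ref{item:ACL1comx1}.
\item Property \ref{item:ACL3RR} gives $(b\cdot x)\cdot 1_l=b\cdot(x\cdot 1_l)$. Together with \ref{item:ACL1comx1} this yields \ref{item:ACL1pseudoCom}.
\end{itemize}

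\textbf{Associator rows.}

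For \ref{item:ACL1_as_R}, take Property \ref{item:ACL2} for $b$ with $x\mapsto b'$ and $y\mapsto 1_l$:
\[
(b\cdot b')\cdot z=(b\cdot b')\cdot(1_l\cdot z)=b\cdot((b'\cdot 1_l)\cdot z)=b\cdot(b'\cdot z).
\]

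For \ref{item:ACL1_as_M}, first note that Property \ref{item:ACL1} for $b$ with $x\mapsto b'$ gives $L_b\circ L_{b'}=L_{b'}\circ L_b$. Then
\begin{itemize}
\item $(b\cdot x)\cdot b'=b'\cdot((b\cdot x)\cdot 1_l)$ by \ref{item:ACL1com1x} for $b'$;
\item $b'\cdot((b\cdot x)\cdot 1_l)=b'\cdot(b\cdot(x\cdot 1_l))$ by \ref{item:ACL3RR};
\item $b'\cdot(b\cdot(x\cdot 1_l))=b\cdot(b'\cdot(x\cdot 1_l))$ by the commutation of $L_b$ and $L_{b'}$;
\item $b\cdot(b'\cdot(x\cdot 1_l))=b\cdot(x\cdot b')$ by \ref{item:ACL1com1x} for $b'$.
\end{itemize}

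\textbf{Last row, \ref{item:ACL1_as_L}.} I expect this to be the main obstacle. One cannot invoke closure of $AC_l(A)\cdot 1_l$ under products, since that is the later \cref{thm:ACL1_subalgebra}. The plan is to compute directly:
\begin{itemize}
\item $(x\cdot b)\cdot b'=b'\cdot((x\cdot b)\cdot 1_l)$ by \ref{item:ACL1com1x} for $b'$;
\item rewrite $x\cdot b=b\cdot(x\cdot 1_l)$ and apply \ref{item:ACL3RR}, turning this into $b'\cdot\big(b\cdot((x\cdot 1_l)\cdot 1_l)\big)$;
\item commute $L_{b'}$ past $L_b$ to get $b\cdot\big(b'\cdot((x\cdot 1_l)\cdot 1_l)\big)$;
\item use \ref{item:ACL1com1x} and then \ref{item:ACL1x1inv} for $b'$ to get $b\cdot((x\cdot 1_l)\cdot b')=b\cdot(x\cdot b')$;
\item Property \ref{item:ACL1} for $b$ with $y=b'$ gives $b\cdot(x\cdot b')=x\cdot(b\cdot b')$.
\end{itemize}
Hence $[x,b,b']_{\mathrm{as}}=0$.

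The operator forms in every row are just these identities rewritten with $L$ and $R$.
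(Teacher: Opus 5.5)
Your first row ($b\cdot(x\cdot 1_l)=x\cdot b$, from $a\cdot(x\cdot 1_l)=x\cdot(a\cdot 1_l)$) and your proof of $[b,b',x]_{\mathrm{as}}=0$ (the second defining identity of $AC_l(A)$ with $x\mapsto b'$, $y\mapsto 1_l$) are sound.

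Everything else goes through the last two rows of \cref{tab:ACL}, namely $(a\cdot x)\cdot 1_l=a\cdot(x\cdot 1_l)$ and $x\cdot(a\cdot 1_l)=(x\cdot 1_l)\cdot(a\cdot 1_l)=a\cdot(x\cdot 1_l)$. These are not valid. Their proof in \cref{lem:ACL1_properties_A} uses $R_{a1_l}\circ R_{1_l}=R_{a1_l}$, which does not follow from $(a\cdot 1_l)\cdot 1_l=a\cdot 1_l$.

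Here is a counterexample. Let $A$ have basis $e,b,f,g,c$, with $e$ a left unity. The only other nonzero products of basis vectors are $b\cdot e=b$, $b\cdot g=c$, $f\cdot e=g$ and $f\cdot b=c$.
\begin{itemize}
\item On basis vectors, both sides of $b\cdot(x\cdot z)=x\cdot(b\cdot z)$ vanish except at $(x,z)=(e,e),(e,g),(f,e)$, where both equal $b,c,c$ respectively.
\item Both sides of $(b\cdot x)\cdot(y\cdot z)=b\cdot((x\cdot y)\cdot z)$ vanish except at $(x,y,z)=(e,e,e),(e,e,g),(e,f,e)$, where both equal $b,c,c$.
\item Since also $b\cdot e=b$, we get $b\in AC_l(A)\cdot 1_l$.
\end{itemize}
Yet $(b\cdot f)\cdot e=0\neq c=b\cdot(f\cdot e)$. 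Also $f\cdot b=c=b\cdot(f\cdot e)$, while $(f\cdot e)\cdot b=g\cdot b=0$. So the second and fourth rows of \cref{tab:ACL_1} are false in general, and no argument can establish them. The paper's own proof has the same defect: it also reads these rows off \cref{lem:ACL1_properties_A} and invokes \cref{rem:ACL1_assoc}.

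The other rows do hold, but your derivations of them must avoid the false identities.
\begin{itemize}
\item \textbf{Third row.} Take $y=z=1_l$ in the second defining identity, then apply the first: $(b\cdot x)\cdot 1_l=b\cdot((x\cdot 1_l)\cdot 1_l)=(x\cdot 1_l)\cdot b$.
\item \textbf{A key identity.} Write $T=L_b$, $T'=L_{b'}$, $R=R_{1_l}$. Combine your $L_{bb'}=L_b\circ L_{b'}$, the second identity for $b$ at $x=b'$, and the same identity for $b'$ at $y=1_l$. This gives $b\cdot(b'\cdot(y\cdot z))=b\cdot((b'\cdot y)\cdot z)=b\cdot(b'\cdot((y\cdot 1_l)\cdot z))$. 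At $z=1_l$ this reads $TT'R=TT'R^2$, hence also $TT'R^3=TT'R$.
\item \textbf{Middle associator.} We have $(b\cdot x)\cdot b'=b\cdot((x\cdot 1_l)\cdot b')=TT'R^2x$ and $b\cdot(x\cdot b')=TT'Rx$, so $[b,x,b']_{\mathrm{as}}=0$.
\item \textbf{Right associator.} Using $(b\cdot u)\cdot 1_l=b\cdot((u\cdot 1_l)\cdot 1_l)$ and $TT'=T'T$, we get $(x\cdot b)\cdot b'=b'\cdot((b\cdot(x\cdot 1_l))\cdot 1_l)=TT'R^3x$. Meanwhile $x\cdot(b\cdot b')=b\cdot(x\cdot b')=TT'Rx$, so $[x,b,b']_{\mathrm{as}}=0$.
\end{itemize}

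With this repair, your explicit treatment of the three associator rows is more direct than the paper's. The paper's displayed computation establishes identities for the product $b\cdot b'$, not for the associators themselves.
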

		\begin{proof}
			Properties \ref{item:ACL1com1x}--\ref{item:ACL1x1inv} are a rewriting of \ref{item:ACL1}--\ref{item:ACL4}. Properties \ref{item:ACL1_as_R}--\ref{item:ACL1_as_L} follow from the interactions of properties in Lemma \ref{prop:ACL1_properties_b} with products of two elements $b,b'\in AC_l(A)\cdot 1_l$. Indeed,
			\begin{align*}
				\ref{item:ACL1comx1}&\colon L_{bb'}\circ R_{1_l}\overset{\ref{item:ACL2RR}}{=} L_b\circ R_{1_l}\circ R_{b'}\overset{\ref{item:ACL1R}}{=} R_b\circ R_b'\Rightarrow (b\cdot b')\cdot (x\cdot 1_l)=(x\cdot b')\cdot b\\
				&\hspace{1cm}=(x\cdot b')\cdot (b\cdot 1_l)\overset{\ref{item:ACL2}}{=}(b\cdot x)\cdot (b'\cdot 1_l)\overset{\ref{item:ACL1}}{=}b'\cdot ((b\cdot x)\cdot 1_l).\textnormal{ Use Remark \ref{rem:ACL1_assoc}: }\\
				&\hspace{1cm}=b'\cdot ((b\cdot x)\cdot 1_l)=b'\cdot (b\cdot (x\cdot 1_l))\overset{\ref{item:ACL1}}{=}b\cdot (b'\cdot (x\cdot 1_l))\overset{\ref{item:ACL1comx1}}{=}b\cdot ((x\cdot 1_l)\cdot b')\\
				&\hspace{1cm}\overset{\ref{item:ACL1}}{=}(x\cdot 1_l)\cdot (b\cdot b')\Rightarrow (b\cdot b')\cdot (x\cdot 1_l)=(x\cdot 1_l)(b\cdot b')\\
				\ref{item:ACL1pseudoCom}&\colon \textnormal{ follows from applying }\ref{item:ACL3RR}\textnormal{ to }\ref{item:ACL1comx1}.\\
				\ref{item:ACL1x1inv}&\colon x(bb')\overset{\ref{item:ACL1}}{=}b\cdot (x\cdot b')\overset{\ref{item:ACL1pseudoCom}}{=}b\cdot ((x\cdot 1_l)\cdot b')\overset{\ref{item:ACL1}}{=}(x\cdot 1_l)\cdot (b\cdot b').\\
				\ref{item:ACL1com1x}&\colon \textnormal{ follows from }\ref{item:ACL1comx1} \textnormal{ and }\ref{item:ACL1x1inv}.
			\end{align*}
			This completes the proof.
		\end{proof}
		
		\begin{theorem}\label{thm:ACL1_subalgebra}
			Let $(A,1_l)$ be a left-unital algebra. $AC_l(A)\cdot 1_l$ is a commutative associative subalgebra of $(A,1_l)$.
			Moreover, $\left(AC_l(A)\cdot 1_l\right)^2\subseteq Ann_A^l([A,A,A]_{\mathrm{as}})$.
		\end{theorem}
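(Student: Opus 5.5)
The plan is to work throughout with the operator identities already collected in \cref{lem:ACL1_properties_A} and \cref{prop:ACL1_properties_b}, using three facts about any $b\in AC_l(A)\cdot 1_l$. First, $b\cdot 1_l=b$ by \cref{lem:ACLcommutes}. Second, $L_b$ commutes with every $L_x$ (Property \ref{item:ACL1}). Third, $L_{bb'}=L_b\circ L_{b'}$ for $b,b'\in AC_l(A)\cdot 1_l$ (Property \ref{item:ACL1_as_R}).

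\textbf{Commutativity and associativity.} For commutativity, apply \ref{item:ACL1} with $y=1_l$: $b\cdot b'=b\cdot(b'\cdot 1_l)=b'\cdot(b\cdot 1_l)=b'\cdot b$. Associativity inside $AC_l(A)\cdot 1_l$ is the special case $x=b''$ of Property \ref{item:ACL1_as_R}, namely $[b,b',b'']_{\mathrm{as}}=0$. So once closure is established, the subalgebra is commutative and associative.

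\textbf{Closure.} I must show that $c=b\cdot b'$ lies in $AC_l(A)$ and satisfies $c\cdot 1_l=c$.
\begin{itemize}
\item The fixed-point condition follows from $L_c=L_b\circ L_{b'}$: indeed $c\cdot 1_l=b\cdot(b'\cdot 1_l)=b\cdot b'=c$.
\item Property \ref{item:ACL1} for $c$ is immediate, since $L_b\circ L_{b'}$ commutes with every $L_x$.
\item For Property \ref{item:ACL2}, equivalently hom-associativity of $(A,L_c)$, I would compare two expansions. On one side, $(x\cdot y)\cdot(c\cdot z)=b\cdot(b'\cdot((x\cdot y)\cdot z))$ by \ref{item:ACL1} applied twice, and this equals $b\cdot((b'\cdot x)\cdot(y\cdot z))$ by \ref{item:ACL2} for $b'$. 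On the other side, $(c\cdot x)\cdot(y\cdot z)=(b\cdot(b'\cdot x))\cdot(y\cdot z)=b\cdot(((b'\cdot x)\cdot y)\cdot z)$ by \ref{item:ACL2} for $b$.
\end{itemize}
Hence closure reduces to the identity
\begin{equation*}
b\cdot[b'\cdot x,y,z]_{\mathrm{as}}=0\quad\text{for all }x,y,z\in A .
\end{equation*}

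\textbf{The main obstacle.} Proving this last identity is where I expect the difficulty. My first attempt is to expand $b\cdot[u,y,z]_{\mathrm{as}}$ for a general $u$ using \ref{item:ACL2}, which gives
\begin{equation*}
b\cdot[u,y,z]_{\mathrm{as}}=(b\cdot u)\cdot(y\cdot z)-u\cdot(b\cdot(y\cdot z)).
\end{equation*}
I then specialise to $u=b'\cdot x$ and try to reshape both terms into one another. For the first term, I would use $b\cdot(b'\cdot x)=b'\cdot(b\cdot x)$ (from commutativity together with \ref{item:ACL1_as_R}) and then hom-associativity for $b'$. For the second term, I would push $b$ and $b'$ across products using \ref{item:ACL1}, \ref{item:ACL1_as_M} and \ref{item:ACL1_as_L}. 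The symmetric roles of $b$ and $b'$ should make the two terms cancel. If this direct route stalls, the fallback is to first prove the weaker statement $b\cdot b'\cdot[x,y,z]_{\mathrm{as}}=0$, and then recover the case $u=b'\cdot x$ from it.

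\textbf{Annihilator claim.} This is exactly the statement $(b\cdot b')\cdot[x,y,z]_{\mathrm{as}}=0$. Since $L_{bb'}=L_b\circ L_{b'}$, I would compute
\begin{equation*}
b'\cdot[x,y,z]_{\mathrm{as}}=(b'\cdot x)\cdot(y\cdot z)-x\cdot(y\cdot(b'\cdot z))
\end{equation*}
via \ref{item:ACL2} and \ref{item:ACL1}, and then apply $L_b$. Using hom-associativity of $L_b$ together with \ref{item:ACL1_as_M}, the two resulting terms should both become $(x\cdot y)\cdot(b\cdot(b'\cdot z))$, so that they cancel.
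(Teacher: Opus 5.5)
Your reduction is sound. The fixed-point condition, Property \ref{item:ACL1} for $c=b\cdot b'$, commutativity and associativity all check out, and closure is indeed equivalent to $b\cdot[b'\cdot x,y,z]_{\mathrm{as}}=0$. But that identity is the heart of the theorem, and you do not prove it.

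The route you sketch is circular. You rewrite the first term as $(b'\cdot(b\cdot x))\cdot(y\cdot z)=b'\cdot(((b\cdot x)\cdot y)\cdot z)$ and the second as $(b'\cdot x)\cdot(b\cdot(y\cdot z))=b\cdot(b'\cdot((x\cdot y)\cdot z))=b'\cdot((b\cdot x)\cdot(y\cdot z))$. This merely turns $b\cdot[b'\cdot x,y,z]_{\mathrm{as}}$ into $b'\cdot[b\cdot x,y,z]_{\mathrm{as}}$, the same statement with $b$ and $b'$ exchanged. Nothing in your sketch explains how \ref{item:ACL1_as_M} or \ref{item:ACL1_as_L} would break this loop. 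The fallback (``recover the case $u=b'\cdot x$'') is left unexplained. Your annihilator sketch is only asserted: turning $b\cdot(x\cdot(y\cdot(b'\cdot z)))$ into $(x\cdot y)\cdot(b\cdot(b'\cdot z))$ requires precisely the instance described next, and \ref{item:ACL1_as_M} plays no role.

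The missing idea is to apply \ref{item:ACL2} for $b$ with $b'$ itself in the first slot. This gives $(b\cdot b')\cdot(y\cdot z)=b\cdot((b'\cdot y)\cdot z)$, whereas \ref{item:ACL1_as_R} gives $(b\cdot b')\cdot(y\cdot z)=b\cdot(b'\cdot(y\cdot z))$. Hence $b\cdot[b',y,z]_{\mathrm{as}}=0$ for all $y,z$. This is exactly the step in the paper's second chain, $b\cdot((b'\cdot x)\cdot(y\cdot z))=(b\cdot b')\cdot(x\cdot(y\cdot z))$.

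Since $b'\cdot[x,y,z]_{\mathrm{as}}=[b',x,y\cdot z]_{\mathrm{as}}$ by \ref{item:ACL2}, this yields the annihilator claim directly, $c\cdot[x,y,z]_{\mathrm{as}}=b\cdot[b',x,y\cdot z]_{\mathrm{as}}=0$, before closure is known. Your identity then follows. Applying \ref{item:ACL2} for $b'$ with $1_l$ in the middle slot gives $(b'\cdot x)\cdot y=b'\cdot p$ with $p=(x\cdot 1_l)\cdot y$, hence
\begin{align*}
b\cdot(((b'\cdot x)\cdot y)\cdot z)&=b\cdot((b'\cdot p)\cdot z)=b\cdot(b'\cdot(p\cdot z))=c\cdot(((x\cdot 1_l)\cdot y)\cdot z)=c\cdot(x\cdot(y\cdot z)),\\
b\cdot((b'\cdot x)\cdot(y\cdot z))&=b\cdot(b'\cdot((x\cdot y)\cdot z))=c\cdot((x\cdot y)\cdot z)=c\cdot(x\cdot(y\cdot z)).
\end{align*}
The last equality in each line holds because $c$ left-annihilates all associators.

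The paper uses the same two-expansion strategy. However, its first chain opens with $((b\cdot b')\cdot x)\cdot(y\cdot z)=x\cdot((b\cdot b')\cdot(y\cdot z))$, attributed to \ref{item:ACL1}, and that property does not give it. The step amounts to $[b\cdot b',x,y\cdot z]_{\mathrm{as}}=0$, which, given the annihilator claim, is precisely your unproved identity. So you located the real difficulty correctly, and the instance of \ref{item:ACL2} above is what settles it.
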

		\begin{proof}
			Let $b,b'\in AC_l(A)\cdot 1_l$. It has already been proven that $b\cdot b'$ commutes with $1_l$, so it suffices to examine whether $b\cdot b'\in AC_l(A)$, that is, whether $(A,L_{bb'},1_l)$ is hom-associative. First, observe that elements of the form $b\cdot b'$ still satisfy Property \ref{item:ACL1}, as a consequence of $b$ and $b'$ satisfying it:
			\begin{equation*}
				(b\cdot b')\cdot (x\cdot y)\overset{\ref{item:ACL1_as_R}}{=}b\cdot (b'\cdot (x\cdot y))\overset{\ref{item:ACL1}}{=}b\cdot (x\cdot (b'\cdot y))\overset{\ref{item:ACL1}}{=}x\cdot (b\cdot (b'\cdot y))=x\cdot ((b\cdot b')\cdot y).
			\end{equation*}
			The hom-associator $[x,y,z]_{\mathrm{as}}^{L_{bb'}}$ of any three elements of $A$ is as follows:
			\begin{align*}
				& ((b\cdot b')\cdot x)\cdot (y\cdot z)\overset{\ref{item:ACL1}}{=}x\cdot ((b\cdot b')\cdot (y\cdot z))\overset{\ref{item:ACL1_as_R}}{=}x\cdot (b\cdot (b'\cdot (y\cdot z)))\\
				& \hspace{2.1cm} \overset{\ref{item:ACL1}}{{=}}b\cdot (x\cdot (b'\cdot (y\cdot z)))\overset{\ref{item:ACL1}}{=}b\cdot (b'\cdot (x\cdot (y\cdot z)))\overset{\ref{item:ACL1_as_L}}{=}(b\cdot b')\cdot (x\cdot (y\cdot z)),\\
				& (x\cdot y)\cdot ((b\cdot b')\cdot z)\overset{\ref{item:ACL1_as_R}}{=}(x\cdot y)\cdot (b\cdot (b'\cdot z))\overset{\ref{item:ACL1}}{=}b\cdot ((x\cdot y)\cdot (b'\cdot z))\\
				& \hspace{2.8cm} \overset{HA}{=} b\cdot ((b'\cdot x)\cdot (y\cdot z))\overset{\ref{item:ACL2}}{=}(b\cdot b')\cdot (x\cdot (y\cdot z)).
			\end{align*}
			By taking the difference, it follows that $[x,y,z]_{\mathrm{as}}^{L_{bb'}}=0$. Observe that we can use Property \ref{item:ACL1} on $(x\cdot y)\cdot ((b\cdot b')\cdot z)$ instead, obtaining
			$(b\cdot b')\cdot ((x\cdot y)\cdot z)$. It follows that $(b\cdot b')\cdot ([x,y,z]_{\mathrm{as}})=[x,y,z]_{\mathrm{as}}^{L_{bb'}}=0$. This shows that $AC_l(A)\cdot 1_l$ is indeed a subalgebra and $b\cdot b'\in Ann_A^l([A,A,A]_{as})$.
			
			By Property \ref{item:ACL1}, we have
			\begin{equation*}
				b\cdot b'=(b\cdot b')\cdot 1_l=b\cdot (b'\cdot 1_l)\overset{\ref{item:ACL1}}{=}b'\cdot (b\cdot 1_l)=(b'\cdot b)\cdot 1_l=b'\cdot b.
			\end{equation*}
			Thus, $AC_l(A)\cdot 1_l$ is commutative. Finally, properties \ref{item:ACL1_as_R}--\ref{item:ACL1_as_L} indicate that $AC_l(A)\cdot 1_l$ is associative. This completes the proof.
		\end{proof}
		\begin{example}
			A left-unital algebra $(A,1_l)$ without proper subalgebras satisfies one of the following:
			\begin{enumerate}[label=\textnormal{\arabic*)}]
				\item $AC_l(A)\cdot 1_l=\{0\}$, and it has no non-trivial hom-associative structures.
				\item $AC_l(A)\cdot 1_l=A$, and $A$ is two-sided unital associative. Indeed, it follows from $1_l\in AC_l(A)\cdot 1_l$ that $A$ is associative. Moreover, all elements of $A$ are in $AC_l(A)\cdot 1_l$ and thus commute with $1_l$, which shows that $1_l$ is two-sided.
			\end{enumerate}
		\end{example}
		
		Association relations \ref{item:ACL1_as_R}, \ref{item:ACL1_as_M} and \ref{item:ACL1_as_L} involve two elements in $AC_l(A)\cdot 1_l$, and thus we do not have $AC_l(A)\subseteq N(A)$. This marks a clear difference with the two-sided unital case: $AC(A)$ is, by definition, a subspace of $N(A)$, as shown in \cref{thm:AC_two_sided_unital}.
		Using partial nucleus notation, we can write these relations as
		\begin{equation*}
			N_ A
			^l(AC_l(A)\cdot 1_l)=N_ A ^m(AC_l(A)\cdot 1_l)=N_ A ^r(AC_l(A)\cdot 1_l)= A .
		\end{equation*}
		
		\begin{proposition}\label{prop:ACl_2_nilpotent}
			Let $(A,1_l)$ be a left-unital algebra, $x,y,z\in A$ such that $[x,y,z]_{\mathrm{as}}$ is right-regular. The following statements hold:
			\begin{enumerate}[label=\textnormal{\arabic*)}]
				\item For all $b,b'\in AC_l(A)\cdot 1_l,\ b\cdot b'=0$.
				\item All elements in $AC_l(A)\cdot 1_l$ are 2-nilpotent.
				\item For all $b\in AC_l(A)\cdot 1_l$ and $x,y,z\in A$, $b\cdot [x,y,z]_{\mathrm{as}}$ is a right zero divisor.
			\end{enumerate}
		\end{proposition}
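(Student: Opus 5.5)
The natural tool is \cref{thm:ACL1_subalgebra}, which gives $\left(AC_l(A)\cdot 1_l\right)^2\subseteq Ann_A^l([A,A,A]_{\mathrm{as}})$. Fix $x_0,y_0,z_0\in A$ with $w:=[x_0,y_0,z_0]_{\mathrm{as}}$ right-regular, meaning $u\cdot w\neq 0$ for every nonzero $u\in A$.

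\textbf{Statement 1).} Take $b,b'\in AC_l(A)\cdot 1_l$. By \cref{thm:ACL1_subalgebra}, $(b\cdot b')\cdot w=0$. Since $w$ is right-regular, this forces $b\cdot b'=0$.

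\textbf{Statement 2).} Specialise statement 1) to $b'=b$. This gives $b^2=0$, so every element of $AC_l(A)\cdot 1_l$ is $2$-nilpotent.

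\textbf{Statement 3).} Here I would interpret ``right zero divisor'' as an element $c$ admitting a nonzero $u$ with $u\cdot c=0$; the paper's convention should be checked against the definition of right-regular. For $b\in AC_l(A)\cdot 1_l$ and arbitrary $x,y,z$, set $c=b\cdot[x,y,z]_{\mathrm{as}}$. If $b=0$ then $c=0$ and there is nothing to prove, so assume $b\neq 0$. The plan is to show $b\cdot c=0$, i.e. $b\cdot(b\cdot[x,y,z]_{\mathrm{as}})=0$, which exhibits the nonzero element $b$ annihilating $c$ from the left.

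\begin{itemize}
\item Property \ref{item:ACL1_as_R} gives $[b,b,v]_{\mathrm{as}}=0$ for all $v$, i.e.\ $L_b\circ L_b=L_{b\cdot b}$.
\item By statement 2), $L_{b\cdot b}=L_0=0$.
\item Hence $b\cdot(b\cdot v)=(b\cdot b)\cdot v=0$ for every $v$, in particular for $v=[x,y,z]_{\mathrm{as}}$.
\end{itemize}

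The main obstacle is aligning ``zero divisor'' with the paper's sidedness convention. If the intended meaning is instead that some nonzero $u$ satisfies $c\cdot u=0$, I would first try an analogous argument using $R_{bb'}=R_{b'}\circ R_b$ (Property \ref{item:ACL1_as_L}), which yields $(v\cdot b)\cdot b=0$. Note, however, that this concerns right multiplication by $b$ rather than products of the form $c\cdot u$. The direct argument above works for the left-annihilation reading, and the hypothesis (right-regularity of $w$) enters only through statements 1) and 2).
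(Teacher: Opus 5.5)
Your proof is correct and follows the paper's argument. Statements 1) and 2) come from $(AC_l(A)\cdot 1_l)^2\subseteq Ann_A^l([A,A,A]_{\mathrm{as}})$ together with right-regularity, and statement 3) comes from the relation $[b,b',v]_{\mathrm{as}}=0$, which gives $b\cdot(b\cdot[x,y,z]_{\mathrm{as}})=0$. The paper uses your first reading of ``right zero divisor'': it calls $b\cdot[x,y,z]_{\mathrm{as}}$ a right zero divisor \emph{of $b$}. The only difference is cosmetic: the paper obtains $(b\cdot b)\cdot[x,y,z]_{\mathrm{as}}=0$ directly from the annihilator inclusion rather than from $b^2=0$, so its version of 3) does not actually use the regularity hypothesis.
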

		\begin{proof}
			\cref{thm:ACL1_subalgebra} indicates that $b\cdot b'$ is a left zero divisor of all associators of $A$ for any $b,b'\in AC_l(A)\cdot 1_l$. If there exist $x,y,z\in A$ such that $[x,y,z]_{\mathrm{as}}$ is right-regular, then $b\cdot b'=0$. In particular, every element $b\in AC_l(A)\cdot 1_l$ is 2-nilpotent. Applying relation \ref{item:ACL1_as_R} to \cref{thm:ACL1_subalgebra} yields $b\cdot (b'\cdot [x,y,z]_{\mathrm{as}})=0$, and in particular $b\cdot (b\cdot [x,y,z]_{\mathrm{as}})=0$ for all $b\in AC_l(A)\cdot 1_l$.
			\begin{enumerate}[label=\textnormal{\arabic*)}]
				\item If $AC_l(A)\cdot 1_l\neq\{0\}$, then $b\cdot [x,y,z]_{\mathrm{as}}$ is a right zero divisor of $b$.
				\item If $AC_l(A)\cdot 1_l=\{0\}$, then $b\cdot [x,y,z]_{\mathrm{as}}=0$, which is trivially a global zero divisor.
			\end{enumerate}
			This completes the proof.
		\end{proof}
		
		\begin{example}[Division algebras]
			We can consider one-sided unital division algebras as domains where all elements are invertible (if associative) or all multiplication operators are bijective (if non-associative). In the non-associative case, Proposition \ref{prop:ACl_2_nilpotent} implies that $b\cdot (b\cdot [x,y,z]_{\mathrm{as}})=0$ for all $b\in AC_l(A)\cdot 1_l$ and $x,y,z\in A$. This relation indicates that $L_b$ can never be bijective. Thus,  $(A,L_b,1_l)$ cannot be a division algebra. There are no nontrivial hom-associative one-sided unital division algebras. Note that this includes two-sided unital division algebras.
		\end{example}

		\begin{theorem}\label{thm:no_domains_L}
			Let $(A,1_l)$ be a domain. Then, $AC_l(A)= Z(A)$ if $(A,1_l)$ is associative, and $AC_l(A)\cdot 1_l=\{0\}$ otherwise. Moreover, the following statements are true:
			\begin{enumerate}[label=\textnormal{\arabic*)},ref=\textnormal{\arabic*)}]
				\item Any left-unital hom-associative domain is either trivial or associative.
				\item Any two-sided unital hom-associative domain is either trivial or associative.
				\item Any two-sided unital hom-associative division algebra is either trivial or associative.
			\end{enumerate}
		\end{theorem}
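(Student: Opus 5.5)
The plan is to split into the associative and non-associative cases, using only the domain hypothesis: every multiplication operator is injective, so the cancellation law holds and $Ann_A^l(A)=\{0\}$.

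\textbf{Associative case.} Here \cref{thm:bijection_left_unital}\ref{item:HAstr_B42} already states $AC_l(A)=Z(A)$ for associative domains, so this is immediate. Concretely, for $a\in AC_l(A)$ we have $(ax-xa)z=0$ for all $z$. Injectivity of $R_z$ for $z\neq0$ gives $ax=xa$, and the converse inclusion is \ref{item:HAstr_B41}.

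\textbf{Non-associative case.} I pick $x,y,z$ with $c=[x,y,z]_{\mathrm{as}}\neq 0$. Since $A$ is a domain, $c$ is right-regular in the sense that $L_w$ is injective for every $w\neq0$, so $w\cdot c\neq 0$ whenever $w\neq 0$. Proposition~\ref{prop:ACl_2_nilpotent} then gives $b\cdot b'=0$ for all $b,b'\in AC_l(A)\cdot 1_l$. In particular $b^2=0$, so $L_b(b)=0$, and injectivity of $L_b$ forces $b=0$.

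If one prefers to avoid the nilpotency proposition, Theorem~\ref{thm:ACL1_subalgebra} gives $b^2\cdot c=0$. Since $L_{b^2}$ is injective for $b^2\neq0$, this yields $b^2=0$, and then $b=0$ as above. Hence $AC_l(A)\cdot 1_l=\{0\}$.

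\textbf{Consequences.}
\begin{enumerate}
\item If $(A,\alpha,1_l)$ is hom-associative, then by \ref{item:HAstr_B1} $\alpha=L_a$ with $a\in AC_l(A)\cdot 1_l$. Either $A$ is associative, or $a=0$ and $\alpha=0$ is trivial.
\item The two-sided unital statement is the special case $1_{lr}=1_l$.
\item A division algebra has bijective multiplication operators, so it is in particular a domain, and the previous item applies.
\end{enumerate}

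The main obstacle is bookkeeping rather than substance. The paper's term ``right-regular'' ($x\cdot a\ne0$) has to be matched with the side on which $c$ must be cancelled, namely $L_{bb'}(c)$ with $L_{bb'}$ injective. In a domain both sides are regular, so the step goes through either way. I would also note that ``trivial'' here means $\alpha=0$, or more generally $\alpha=L_a$ with $a\in Ann^l_A(A)=\{0\}$.
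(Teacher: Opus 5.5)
Your proposal is correct and is essentially the paper's proof. The associative case is taken from \cref{thm:bijection_left_unital}\ref{item:HAstr_B4} (cancellation in a domain). In the non-associative case the paper likewise uses $b\cdot b=0$ for $b\in AC_l(A)\cdot 1_l$ together with injectivity of $L_b$, and then obtains 1)--3) by specializing to two-sided unital algebras and noting that division algebras are domains. The only difference is that you make explicit where $b\cdot b=0$ comes from: \cref{prop:ACl_2_nilpotent}, or directly $(b\cdot b)\cdot[x,y,z]_{\mathrm{as}}=0$ from \cref{thm:ACL1_subalgebra}, using that every nonzero associator is right-regular in a domain. The paper leaves this step implicit.
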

		\begin{proof}
			Let $(A,1_l)$ be an associative domain.  That $AC_l(A)= Z(A)$ is proven in \cref{thm:bijection_left_unital}\ref{item:HAstr_B4}. If $(A,1_l)$ is a non-associative domain, then all multiplication operators are injective. Thus, for any $b\in AC_l(A)\cdot 1_l$ it holds that $b\cdot b=L_b(b)=0\Rightarrow b=0$. Consequently, a left-unital hom-associative domain with $AC_l(A)\cdot 1_l\neq\{0\}$ can only be associative or trivial, in which case $AC_l(A)=A$. Finally, any two-sided unital algebra is left-unital, and any division algebra is a domain. This completes the proof.
		\end{proof}

		\subsection{The right-sided counterpart}\label{ssec:right-sided-unital}
		
		In this chapter, we have considered left-unital algebras. This approach is motivated by the fact that the opposite algebra of a given right-unital (left-unital) algebra  is left-unital (right-unital).
		
		On a given right-unital algebra, the subspace of hom-unities is given by
		\begin{equation*}
			AC_r(A)=\{a\in A \mid R_a\circ R_y=R_y\circ R_a,\ R_{za}\circ R_{y}=R_{a}\circ R_{yz},\ \forall y,z\in A  \}.
		\end{equation*}
		\begin{proposition}
			For any right-unital algebra $(A,1_r)$ it holds that
			\begin{equation*}
				\{a\in AC_r(A)\mid a=1_r\cdot a\}=1_r\cdot AC_r(A)\subseteq AC_r(A).
			\end{equation*}
			Equality is reached when $Ann_ A ^r(A)=\{0\}$. Moreover,
			\begin{equation*}
				AC_r(A)=1_r\cdot AC_r(A)\oplus Ann_A^r(A).
			\end{equation*}
		\end{proposition}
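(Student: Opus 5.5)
The plan is to reduce to the left-unital results by passing to the opposite algebra, and to use direct verification only as a sanity check. Let $(A,\mu,1_r)$ be right-unital and set $A^{\mathrm{op}}=(A,\mu_{\mathrm{op}},1_r)$, which is left-unital with unity $1_r$. Write $x\ast y=y\cdot x$. Then left multiplication in $A^{\mathrm{op}}$ is $L^{\mathrm{op}}_a=R_a$. First I will check that
\[
AC_l(A^{\mathrm{op}})=AC_r(A).
\]
Property \ref{item:ACL1} in $A^{\mathrm{op}}$ reads $L^{\mathrm{op}}_a\circ L^{\mathrm{op}}_x=L^{\mathrm{op}}_x\circ L^{\mathrm{op}}_a$, that is, $R_a\circ R_x=R_x\circ R_a$. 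Property \ref{item:ACL2} in $A^{\mathrm{op}}$ reads $L^{\mathrm{op}}_{a\ast x}\circ L^{\mathrm{op}}_y=L^{\mathrm{op}}_a\circ L^{\mathrm{op}}_{x\ast y}$, that is, $R_{xa}\circ R_y=R_a\circ R_{yx}$. Renaming $x\mapsto z$ gives exactly the defining conditions of $AC_r(A)$.

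Next I translate the remaining objects. We have $a\ast 1_r=1_r\cdot a$, so $AC_l(A^{\mathrm{op}})\ast 1_r=1_r\cdot AC_r(A)$, and the condition $a\ast 1_r=a$ becomes $1_r\cdot a=a$. For annihilators, $Ann^l_{A^{\mathrm{op}}}(A^{\mathrm{op}})=\{a\mid a\ast x=0\ \forall x\}=\{a\mid x\cdot a=0\ \forall x\}=Ann^r_A(A)$.

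With this dictionary, \cref{lem:ACLcommutes} applied to $A^{\mathrm{op}}$ gives
\[
\{a\in AC_r(A)\mid 1_r\cdot a=a\}=1_r\cdot AC_r(A)\subseteq AC_r(A).
\]
\cref{thm:ACL_split} applied to $A^{\mathrm{op}}$ gives both the decomposition $AC_r(A)=1_r\cdot AC_r(A)\oplus Ann^r_A(A)$ and the claim that equality $1_r\cdot AC_r(A)=AC_r(A)$ holds when (indeed iff) $Ann^r_A(A)=\{0\}$.

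The only real obstacle is bookkeeping in the second defining condition of $AC_r(A)$: the indices $y,z$ and the order $yz$ versus $zy$ must match the opposite-algebra translation. I verified this above. If a mismatch had appeared, I would instead redo the proofs of \cref{lem:ACLcommutes} and \cref{thm:ACL_split} directly in the right-unital setting. That route works by contracting $z\mapsto 1_r$ in the second condition to get $R_{1_r a}=R_a$, hence $1_r a-a\in Ann^r_A(A)$. One then writes $a=1_r a+(a-1_r a)$, and obtains directness from $1_r\cdot(1_r a)=1_r a$ together with $1_r a\in Ann^r_A(A)$ forcing $1_r a=1_r\cdot(1_r a)=0$.
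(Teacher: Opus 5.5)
Your proposal is correct and is essentially the paper's own route: the paper gives no separate proof here, saying only that right-unital results follow by applying the left-unital ones (\cref{lem:ACLcommutes} and \cref{thm:ACL_split}) to $A^{\mathrm{op}}$, and your dictionary $AC_l(A^{\mathrm{op}})=AC_r(A)$, $a\ast 1_r=1_r\cdot a$, $Ann^l_{A^{\mathrm{op}}}(A^{\mathrm{op}})=Ann^r_A(A)$ checks out exactly. One small slip in your fallback: contracting $z\mapsto 1_r$ alone gives only $R_{1_r a}\circ R_y=R_a\circ R_y$, so you must also set $y\mapsto 1_r$, using $R_{1_r}=\mathrm{id}$, to obtain $R_{1_r a}=R_a$.
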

		
		\begin{theorem}
			$1_r\cdot AC_r(A)$ is a commutative associative subalgebra of $ A$. Moreover, $\left(1_r\cdot AC_r(A)\right)^2\subseteq Ann_ A ^r([ A, A, A ]_{\mathrm{as}})$.
		\end{theorem}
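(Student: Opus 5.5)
The plan is to deduce this from \cref{thm:ACL1_subalgebra} via the opposite algebra, as announced at the start of \cref{sec:unital-1SU}. Let $(A,\mu,1_r)$ be right-unital and consider $A^{\mathrm{op}}=(A,\mu_{\mathrm{op}})$ with $x\ast y=y\cdot x$. Then $1_r$ is a left unity of $A^{\mathrm{op}}$. Left and right multiplication operators swap: $L^{\mathrm{op}}_a=R_a$ and $R^{\mathrm{op}}_a=L_a$.

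The first step is to check that $AC_l(A^{\mathrm{op}})=AC_r(A)$ as subsets of $A$, and that $AC_l(A^{\mathrm{op}})\ast 1_r=1_r\cdot AC_r(A)$.
\begin{itemize}
\item Property \ref{item:ACL1} in $A^{\mathrm{op}}$ reads $L^{\mathrm{op}}_a\circ L^{\mathrm{op}}_x=L^{\mathrm{op}}_x\circ L^{\mathrm{op}}_a$, which is $R_a\circ R_x=R_x\circ R_a$.
\item Property \ref{item:ACL2} reads $L^{\mathrm{op}}_{a\ast x}\circ L^{\mathrm{op}}_y=L^{\mathrm{op}}_a\circ L^{\mathrm{op}}_{x\ast y}$, which is $R_{xa}\circ R_y=R_a\circ R_{yx}$. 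After renaming $x\mapsto z$, this is exactly the second defining condition of $AC_r(A)$.
\item Finally, $a\ast 1_r=1_r\cdot a$, so the two sets coincide.
\end{itemize}
The same check, together with $Ann^l_{A^{\mathrm{op}}}(A)=Ann^r_A(A)$, also shows that the preceding proposition is the opposite-algebra version of \cref{lem:ACLcommutes} and \cref{thm:ACL_split}.

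Next, apply \cref{thm:ACL1_subalgebra} to $(A^{\mathrm{op}},1_r)$. It gives that $B:=1_r\cdot AC_r(A)$ is closed under $\ast$, commutative and associative for $\ast$. It also gives $(b\ast b')\ast[x,y,z]^{\mathrm{op}}_{\mathrm{as}}=0$ for all $b,b'\in B$ and $x,y,z\in A$.

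We then translate each conclusion back to $\mu$.
\begin{itemize}
\item Closure under $\ast$ means $b'\cdot b\in B$, which is closure under $\mu$ after swapping the roles of $b$ and $b'$.
\item Commutativity is symmetric under passing to the opposite product.
\item Associativity of $\ast$ on $B$ means $b''(b'b)=(b''b')b$ for all $b,b',b''\in B$, which is associativity of $\mu$ on $B$.
\end{itemize}

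For the annihilator statement, first compute the opposite associator:
\[
[x,y,z]^{\mathrm{op}}_{\mathrm{as}}=(x\ast y)\ast z-x\ast(y\ast z)=z(yx)-(zy)x=-[z,y,x]_{\mathrm{as}}.
\]
Hence $[A,A,A]^{\mathrm{op}}_{\mathrm{as}}=[A,A,A]_{\mathrm{as}}$ as sets. The condition $(b\ast b')\ast w=0$ becomes $w\cdot(b'\cdot b)=0$ for every associator $w$. Since $B$ is commutative, $B^2=\{bb'\}$ is the same set under both products, so $B^2\subseteq Ann^r_A([A,A,A]_{\mathrm{as}})$.

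I do not expect a real obstacle here. The only point needing care is the bookkeeping in the first step, namely matching the index order in the second condition of $AC_r(A)$ against \ref{item:ACL2} under the swap of $L$ and $R$. Getting this wrong would make the identification $AC_l(A^{\mathrm{op}})=AC_r(A)$ fail. If the orders did not match, I would instead prove the statement directly, mirroring the proof of \cref{thm:ACL1_subalgebra} step by step with all operators reflected.
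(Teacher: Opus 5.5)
Your proposal is correct and follows the paper's route. The paper gives no separate proof of the right-unital statements and instead obtains them by applying the left-unital results, here \cref{thm:ACL1_subalgebra}, to $A^{\mathrm{op}}$. Your bookkeeping for that transfer checks out: $AC_l(A^{\mathrm{op}})=AC_r(A)$, $a\ast 1_r=1_r\cdot a$, $[x,y,z]^{\mathrm{op}}_{\mathrm{as}}=-[z,y,x]_{\mathrm{as}}$ and $Ann^l_{A^{\mathrm{op}}}(S)=Ann^r_A(S)$.
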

		
		\begin{theorem}
			Let $(A,1_r)$ be a right-unital algebra and $E(A)$ the set of all idempotents of $ A$. The following statements hold:
			\begin{enumerate}[label=\textnormal{\arabic*)}]
				\item Twisting maps $\beta$ such that $(A,\beta,1_r)$ is hom-associative are in bijection with the commutative associative subalgebra
				\begin{equation}
					1_r\cdot AC_r(A)=\{a\in AC_r(A)\mid  a=1_ra\}.
				\end{equation}
				The bijection is given by $\phi$ and $\psi\vert_{1_r\cdot AC_r(A)}$.
				\item For every $a\in AC_r(A)$, the product in $A$ is hom-associative with twisting map $R_a=R_{1_ra}$.
				\item If $Z_ A (1_r)=\{0\}$, then $(A,1_r)$ has no non-trivial hom-associative structures.
				\item All multiplicative HA-compatible maps of $(A,1_r)$ have the form
				\begin{equation*}
					\{R_e\mid e\in E(A)\cap 1_r\cdot AC_r(A)\}    .
				\end{equation*}
			\end{enumerate}
		\end{theorem}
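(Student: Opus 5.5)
The plan is to derive all four items from the left-unital results by passing to the opposite algebra. Let $(A,1_r)$ be right-unital, and set $A^{\mathrm{op}}=(A,\mu_{\mathrm{op}})$ with $x\ast y=y\cdot x$. Then $1_r$ is a left unity of $A^{\mathrm{op}}$. Left multiplication in $A^{\mathrm{op}}$ is right multiplication in $A$, that is, $L^{\ast}_a=R_a$.

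The first step is to check that hom-associativity transfers. For a linear map $\beta$ we have
\[
\beta(x)\ast(y\ast z)=(z\cdot y)\cdot\beta(x), \qquad (x\ast y)\ast\beta(z)=\beta(z)\cdot(y\cdot x).
\]
So $(A^{\mathrm{op}},\beta)$ is hom-associative if and only if $(A,\beta)$ is, after renaming $x\leftrightarrow z$. Hence $Twist(A)=Twist(A^{\mathrm{op}})$.

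The second step is to identify $AC_r(A)$ with $AC_l(A^{\mathrm{op}})$ by translating the two defining conditions. Property \ref{item:ACL1} in $A^{\mathrm{op}}$ reads $L^\ast_a\circ L^\ast_y=L^\ast_y\circ L^\ast_a$, which is $R_a\circ R_y=R_y\circ R_a$. Property \ref{item:ACL2} reads $L^\ast_{a\ast z}\circ L^\ast_y=L^\ast_a\circ L^\ast_{z\ast y}$, which is $R_{za}\circ R_y=R_a\circ R_{yz}$. These are exactly the conditions defining $AC_r(A)$. Likewise $AC_l(A^{\mathrm{op}})\ast 1_r=1_r\cdot AC_r(A)$. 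This bookkeeping of index order in products is the only place I expect care to be needed. The obstacle is notational, not conceptual.

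With these identifications, items 1) and 2) are \cref{thm:bijection_left_unital}\ref{item:HAstr_B1} and \ref{item:HAstr_B2} applied to $(A^{\mathrm{op}},1_r)$. Commutativity and associativity of $1_r\cdot AC_r(A)$ as a subalgebra of $A$ follow from \cref{thm:ACL1_subalgebra}, since a commutative subalgebra of $A^{\mathrm{op}}$ is the same set with the same product in $A$. The identity $R_a=R_{1_ra}$ is Property \ref{item:ACL3} in $A^{\mathrm{op}}$. Here $\phi(\beta)=\beta(1_r)$ and $\psi(a)=R_a$, consistent with \cref{lem:unit-fundamental}(xiv).

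Item 4) is \cref{thm:bijection_left_unital}\ref{item:HAstr_B3} on $A^{\mathrm{op}}$. Idempotency $e\ast e=e$ is the same as $e\cdot e=e$, and multiplicativity of $\beta$ transfers because $\beta(y\cdot x)=\beta(y)\cdot\beta(x)$ is the same identity. Alternatively, it follows directly from \cref{thm:unital_multiplicativity}.

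For item 3), every $b\in 1_r\cdot AC_r(A)$ commutes with $1_r$; this is the right analogue of \ref{item:ACL4}, namely $1_r\cdot b=b=b\cdot 1_r$, as in \cref{lem:unit-fundamental}(xii). So $b\in Z_A(1_r)$, where $Z_A(1_r)$ denotes the centralizer of $1_r$. If this centralizer is $\{0\}$, then $b=0$, and by item 1) the only HA-compatible map is $R_0=0$.
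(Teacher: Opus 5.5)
Your proposal is correct and takes the same route as the paper, which states this theorem without a separate proof and obtains it by applying \cref{thm:bijection_left_unital} and \cref{thm:ACL1_subalgebra} to the opposite algebra; your dictionary ($L^{\ast}_a=R_a$, $Twist(A)=Twist(A^{\mathrm{op}})$, $AC_l(A^{\mathrm{op}})=AC_r(A)$, $AC_l(A^{\mathrm{op}})\ast 1_r=1_r\cdot AC_r(A)$, and the same idempotents and multiplicative maps) checks out. Your argument for item 3) is also valid, but its hypothesis is degenerate: $1_r\in Z_A(1_r)$ always, so $Z_A(1_r)=\{0\}$ forces $1_r=0$ and hence $A=\{0\}$, which makes the item vacuous.
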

		The elements of $1_r\cdot AC_r(A)$ behave as follows:
		\begin{lemma}
			Every element $a\in AC_r(A)$ satisfies, for all $x,y,z\in A$, the properties listed in \cref{tab:ACR}.
			\setcounter{propertycounterright}{0}
			\begin{table}[ht!]
				\centering
				\caption{Elementary properties of elements in $AC_r(A)$.}
				\label{tab:ACR}
				\begin{NiceTabular}{|>{\ifnum\value{iRow}=0\else \refstepcounter{propertycounterright}R\arabic{propertycounterright}\fi}c|cc|}[first-row]
					& Element form & Operator form \\
					\hline
					& $(x\cdot y)\cdot a=(x\cdot a)\cdot y$ & $R_a\circ R_y=R_y\circ R_a$ \\ \hline
					& $ (x\cdot y)\cdot (z\cdot a)=(x\cdot (y\cdot z))\cdot a $ & $R_{za}\circ R_y=R_a\circ R_{yz}$ \\ \hline
					& $x\cdot (1_r\cdot a)=x\cdot a$ & $R_{(1_ra-a)}=0$ \\ \hline
					& $1_r\cdot (1_r\cdot a)=1_r\cdot a$ & $[1_ra,1_r]=0$ \\ \hline
					& $(1_r\cdot x)\cdot a=(1_r\cdot a)\cdot x$ & $R_a\circ L_{1_r}=L_{1_ra}$ \\ \hline
					& $(x\cdot a)\cdot (y\cdot z)=(x\cdot (y\cdot z))\cdot a$ & $L_{xa}\circ R_z=R_a\circ L_x\circ R_z$ \\ \hline
					& $1_r\cdot (x\cdot a)=(1_r\cdot x)\cdot a$ &  $L_{1_r}\circ R_a=R_a\circ L_{1_r}$ \\ \hline
					& $(1_r\cdot a)\cdot x=(1_r\cdot a)\cdot (1_r\cdot x)=(1_r\cdot x)\cdot a$ & $L_{1_ra}=L_{1_ra}\circ L_{1_r}=R_a\circ L_{1_r}$ \\ \hline
				\end{NiceTabular}
				
			\end{table}
		\end{lemma}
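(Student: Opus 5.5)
The plan is to deduce the table for $AC_r(A)$ from \cref{lem:ACL1_properties_A} applied to the opposite algebra. We do not redo the computations.

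Let $(A,1_r)$ be right-unital and write $x\ast y=y\cdot x$ for the product of $A^{\mathrm{op}}$. Then $1_r$ is a left unity of $(A,\ast)$. In $A^{\mathrm{op}}$, the left multiplication operator $L^{\ast}_a$ equals $R_a$ in $A$, and $R^{\ast}_a$ equals $L_a$.

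First I check that $AC_r(A)$ coincides with $AC_l(A^{\mathrm{op}})$. Translate the two defining conditions of $AC_l(A^{\mathrm{op}})$ into the product of $A$:
\begin{itemize}
\item $a\ast(x\ast y)=x\ast(a\ast y)$ becomes $(y\cdot x)\cdot a=(y\cdot a)\cdot x$, that is, $R_a\circ R_x=R_x\circ R_a$;
\item $(a\ast x)\ast(y\ast z)=a\ast((x\ast y)\ast z)$ becomes $(z\cdot y)\cdot(x\cdot a)=(z\cdot(y\cdot x))\cdot a$, that is, $R_{xa}\circ R_y=R_a\circ R_{yx}$.
\end{itemize}
After renaming variables, these are exactly the two conditions defining $AC_r(A)$. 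So every $a\in AC_r(A)$ satisfies properties \ref{item:ACL1}--\ref{item:ACL4RR} in $(A^{\mathrm{op}},1_r)$.

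Next I translate those eight properties back, line by line, using the dictionary $u\ast v\mapsto v\cdot u$, $L^\ast\mapsto R$, $R^\ast\mapsto L$. Composition order of operators is unchanged, since these are the same linear maps on the underlying space $A$.
\begin{itemize}
\item \ref{item:ACL3}: $(a\ast 1_r)\ast x=a\ast x$ becomes $x\cdot(1_r\cdot a)=x\cdot a$.
\item \ref{item:ACL4}: becomes $1_r\cdot(1_r\cdot a)=1_r\cdot a$.
\item \ref{item:ACL1R}: $a\ast(x\ast 1_r)=x\ast(a\ast 1_r)$ becomes $(1_r\cdot x)\cdot a=(1_r\cdot a)\cdot x$, with operator form $R_a\circ L_{1_r}=L_{1_ra}$.
\item \ref{item:ACL2RR}: becomes the sixth row, $L_{xa}\circ R_z=R_a\circ L_x\circ R_z$.
\item \ref{item:ACL3RR}: becomes $1_r\cdot(x\cdot a)=(1_r\cdot x)\cdot a$.
\item \ref{item:ACL4RR}: becomes the last row.
\end{itemize}
Each translated statement should match the corresponding row R1--R8 of \cref{tab:ACR}. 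Alternatively, each row can be re-derived directly by contracting the right unity into the two defining identities. For example, putting $x=1_r$ in the second identity gives R3, and putting $y=1_r$ in the first gives R5. This mirrors the proof of \cref{lem:ACL1_properties_A}.

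The only real obstacle is bookkeeping. Variable names get permuted under the opposite product, so I would check each row's element form against its operator form, especially rows R6 and R8, where the order of composition must be verified by evaluating both sides on a test element.
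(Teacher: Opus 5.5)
The route is the one the paper itself relies on: it gives no separate argument for this lemma and simply appeals to $A^{\mathrm{op}}$. Your identification $AC_r(A)=AC_l(A^{\mathrm{op}})$ is correct, and so is the transport of L3, L4, L5 to R3, R4, R5.

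There is a small slip in your direct alternative. In $(x\cdot y)\cdot a=(x\cdot a)\cdot y$ it is $x=1_r$ that gives R5; setting $y=1_r$ gives a tautology. Likewise R3 comes from $y=z=1_r$ in $(x\cdot y)\cdot(z\cdot a)=(x\cdot(y\cdot z))\cdot a$.

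A more substantive point is that L6 does not transport to R6. Under your dictionary, $(a\ast x)\ast(y\ast z)=a\ast((x\ast y)\ast z)$ becomes $(z\cdot y)\cdot(x\cdot a)=(z\cdot(y\cdot x))\cdot a$, which is just R2 again, since L6 and L2 are the same identity in element form. R6 is not the mirror of any row of \cref{tab:ACL}. It needs its own one-line proof: take R1 and replace $y$ by $y\cdot z$.

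The genuine gap is R7 and R8, and it cannot be filled. These rows are the mirrors of L7 and L8, but L7 and L8 do not follow from L1--L2. The paper's derivation replaces $R_{a1_l}\circ R_{1_l}$ by $R_{a1_l}$ citing L4, and L4 only says $(a\cdot 1_l)\cdot 1_l=a\cdot 1_l$.

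On the right-unital side you can see what is really at stake. R2 with $x=y=1_r$, followed by R5, gives $1_r\cdot(x\cdot a)=(1_r\cdot(1_r\cdot x))\cdot a=(1_r\cdot a)\cdot(1_r\cdot x)$. Hence R7 and R8 are each equivalent to $L_{1_ra}\circ L_{1_r}=L_{1_ra}$, and nothing in the definition of $AC_r(A)$ forces this.

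Here is a counterexample. Take $A$ with basis $e,u,v,f,w$, with $x\cdot e=x$ for every basis vector $x$, $e\cdot u=v$, $e\cdot f=f$, $f\cdot u=w$, $v\cdot f=w$, and all other products of basis vectors zero.
\begin{itemize}
\item $e=1_r$ and $R_v=R_w=0$.
\item $R_u$ sends $e\mapsto v$, $f\mapsto w$; $R_f$ sends $e\mapsto f$, $v\mapsto w$; both kill the remaining basis vectors.
\item R1 is nontrivial only for $y=u$. There $R_f\circ R_u=R_u\circ R_f$, since both send $e\mapsto w$ and kill everything else.
\item R2, i.e.\ $R_{z\cdot f}\circ R_y=R_f\circ R_{y\cdot z}$, holds. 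For $z=e$ both sides equal $R_f\circ R_y$. For $z\neq e$ both sides vanish, because $z\cdot f\in\{0,w\}$, $R_{y\cdot z}\in\{0,R_f\}$ and $R_f\circ R_f=0$.
\end{itemize}
So $f\in AC_r(A)$, and even $f=1_r\cdot f$. Yet $1_r\cdot(u\cdot f)=0\neq w=(1_r\cdot u)\cdot f$, and $(1_r\cdot f)\cdot u=w\neq 0=(1_r\cdot f)\cdot(1_r\cdot u)$.

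Thus R7 and R8 are false as stated, and no argument, by duality or otherwise, can prove them. The bookkeeping you postponed is exactly where the claim breaks.

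They do hold under the extra hypothesis $1_r\cdot(1_r\cdot x)=1_r\cdot x$ for all $x$, for example when $A$ is associative or $1_r$ is two-sided. In that case R1 and R5 give $(1_r\cdot a)\cdot(1_r\cdot x)=(1_r\cdot(1_r\cdot x))\cdot a=(1_r\cdot x)\cdot a=(1_r\cdot a)\cdot x$.
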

		
		\begin{lemma}
			Every element $b\in 1_r\cdot AC_r(A)$ satisfies, for all $b'\in 1_r\cdot AC_r(A)$ and $x,y,z\in A$, the properties listed in \cref{tab:ACR_1}.
			\begin{table}[H]
				\centering
				\caption{Properties of elements in $1_r\cdot AC_r(A)$.}
				\label{tab:ACR_1}
				\begin{NiceTabular}{|>{\ifnum\value{iRow}=0\else \refstepcounter{propertycounterright}R\arabic{propertycounterright}\fi}c|cc|}[first-row]
					& Element form & Operator form \\
					\hline & $(1_r\cdot z)\cdot b=b\cdot z$ & $R_b\circ L_{1_r}=L_b$ \\ \hline
					& $(1_r\cdot z)\cdot b=b\cdot (1_r\cdot z)$ & $R_b\circ L_{1_r}=L_b\circ L_{1_r}$ \\ \hline
					& $1_r\cdot (z\cdot b)=b\cdot (1_r\cdot z)$ & $L_{1_r}\circ R_b=L_b\circ L_{1_r}$ \\ \hline
					& $b\cdot z=b\cdot (1_r\cdot z)$ & $L_b\circ L_{1_r}=L_b$ \\ \hline
					& $[b,b',z]_{\mathrm{as}}=0$ & $L_{bb'}=L_b\circ L_{b'}$ \\ \hline
					& $[b,z,b']_{\mathrm{as}}=0$ & $L_b\circ R_{b'}=R_{b'} \circ L_b$ \\ \hline
					& $[z,b,b']_{\mathrm{as}}=0$ & $R_{bb'}=R_{b'}\circ R_b$ \\ \hline
				\end{NiceTabular}
			\end{table}
		\end{lemma}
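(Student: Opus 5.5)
The plan is to transport the left-unital results to the right-unital setting through the opposite algebra, rather than redo the computations. Let $A^{\mathrm{op}}=(A,\mu_{\mathrm{op}},1_r)$ with $x\ast y=y\cdot x$; it is left-unital with left unity $1_r$. Left multiplication $L^{\ast}_a$ in $A^{\mathrm{op}}$ equals right multiplication $R_a$ in $A$.

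The first step is to check that $a\in AC_r(A)$ if and only if $a\in AC_l(A^{\mathrm{op}})$, and that $1_r\cdot AC_r(A)=AC_l(A^{\mathrm{op}})\ast 1_r$. Property \ref{item:ACL1} in $A^{\mathrm{op}}$ reads $a\ast(x\ast y)=x\ast(a\ast y)$, that is, $(y\cdot x)\cdot a=(y\cdot a)\cdot x$, which is the first line of \cref{tab:ACR}. Property \ref{item:ACL2} reads $(a\ast x)\ast(y\ast z)=a\ast((x\ast y)\ast z)$, which translates to $(z\cdot y)\cdot(x\cdot a)=(z\cdot(y\cdot x))\cdot a$, the second line of \cref{tab:ACR} after renaming variables. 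Likewise $a\ast 1_r=1_r\cdot a$, so the two subspaces $1_r\cdot AC_r(A)$ and $AC_l(A^{\mathrm{op}})\ast 1_r$ coincide.

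The second step applies \cref{prop:ACL1_properties_b} to $A^{\mathrm{op}}$. For every $b,b'\in AC_l(A^{\mathrm{op}})\ast 1_r$ we get the identities of \cref{tab:ACL_1}, written in $\ast$. These are then translated line by line, using $L^\ast_u=R_u$, $R^\ast_u=L_u$ and $x\ast 1_r=1_r\cdot x$.

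\begin{itemize}
\item \ref{item:ACL1com1x}, $b\ast(x\ast 1_r)=x\ast b$, becomes $(1_r\cdot x)\cdot b=b\cdot x$. This is the first row of \cref{tab:ACR_1}.
\item \ref{item:ACL1comx1} gives $(1_r\cdot x)\cdot b=b\cdot(1_r\cdot x)$.
\item \ref{item:ACL1pseudoCom}, $(b\ast x)\ast 1_r=(x\ast 1_r)\ast b$, gives $1_r\cdot(x\cdot b)=b\cdot(1_r\cdot x)$.
\item \ref{item:ACL1x1inv} gives $b\cdot x=b\cdot(1_r\cdot x)$.
\end{itemize}

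The associator rows need one extra observation. In $A^{\mathrm{op}}$ we have $[u,v,w]^{\ast}_{\mathrm{as}}=(w\cdot v)\cdot u-w\cdot(v\cdot u)=-[w,v,u]_{\mathrm{as}}$. Hence:

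\begin{itemize}
\item \ref{item:ACL1_as_R}, $[b,b',x]^\ast_{\mathrm{as}}=0$, gives $[x,b',b]_{\mathrm{as}}=0$.
\item \ref{item:ACL1_as_M} gives $[b',x,b]_{\mathrm{as}}=0$.
\item \ref{item:ACL1_as_L} gives $[b',b,x]_{\mathrm{as}}=0$.
\end{itemize}

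Since $b,b'$ range over the same set, we may swap their roles. This yields exactly $[b,b',z]_{\mathrm{as}}=[b,z,b']_{\mathrm{as}}=[z,b,b']_{\mathrm{as}}=0$, the last three rows of \cref{tab:ACR_1}.

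The operator forms then follow by rewriting each element identity with $L$ and $R$. For instance, $[b,b',z]_{\mathrm{as}}=0$ says $(bb')z=b(b'z)$, that is, $L_{bb'}=L_b\circ L_{b'}$. Similarly, $[z,b,b']_{\mathrm{as}}=0$ says $R_{bb'}=R_{b'}\circ R_b$.

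The main obstacle is bookkeeping rather than mathematics. Reversing the product swaps the outer arguments of the associator and reverses compositions. One must confirm that each translated row lands on the stated row, possibly after exchanging $b$ and $b'$ or renaming $x$ as $z$, and not on some other variant. I would do this check explicitly only for the associator rows and for \ref{item:ACL1pseudoCom}, since these are where orders get permuted. The remaining rows are immediate substitutions.
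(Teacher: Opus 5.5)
Your reduction is the route the paper intends: it gives no separate proof, only the remark that right-sided results follow by passing to $A^{\mathrm{op}}$. Your dictionary is also right. $AC_r(A)=AC_l(A^{\mathrm{op}})$, $1_r\cdot AC_r(A)=AC_l(A^{\mathrm{op}})\ast 1_r$, and L9--L15 of \cref{tab:ACL_1} for $A^{\mathrm{op}}$ become R9--R15 as you say. There is one harmless sign slip: $[u,v,w]^{\ast}_{\mathrm{as}}=w\cdot(v\cdot u)-(w\cdot v)\cdot u$.

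The gap is in what you import. L10 and L12 of \cref{prop:ACL1_properties_b} are false in general. So are their translates R10 and R12, the second and fourth rows of the statement, so no proof of these two rows can exist. For a right-unital counterexample, let $A$ have basis $e,b,f$ with $1_r=e$ and
\[
ee=e,\quad be=b,\quad fe=f,\quad eb=b,\quad ef=e-f,\quad bf=b,\quad bb=fb=ff=0.
\]
Write $w=w_ee+w_bb+w_ff$. Then $w\cdot b=w_e\,b$ and $b\cdot w=(w_e+w_f)\,b$. For $a=b$, the two defining identities of $AC_r(A)$ follow from $(xy)_e=x_e(y_e+y_f)$ and $(yz)_e+(yz)_f=z_e(y_e+y_f)$, both checked on the nine basis products. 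Since $1_r\cdot b=b$, we get $b\in 1_r\cdot AC_r(A)$, and $(A,R_b,1_r)$ is indeed hom-associative. Yet $b\cdot f=b$ while $b\cdot(1_r\cdot f)=b\cdot(e-f)=0$, and $(1_r\cdot f)\cdot b=b\neq b\cdot(1_r\cdot f)$.

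The error originates in \cref{lem:ACL1_properties_A}. L7 and L8 are derived via $R_{a1_l}\circ R_{1_l}=R_{a1_l}$, attributed to L4, but L4 only says $(a1_l)1_l=a1_l$. The opposite of the algebra above violates L7, L8, L10 and L12.

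The other rows are true, but they need arguments that avoid L7, L10 and L12; the paper's derivation of L13--L15 runs through them. Work in the left-unital picture with $b,b'\in AC_l(A)\cdot 1_l$:
\begin{itemize}
\item L9 is L1 with $y\mapsto 1_l$.
\item L11 is hom-associativity with $y,z\mapsto 1_l$.
\item L13 is L2 with $x\mapsto b'$, $y\mapsto 1_l$.
\item $bb'=b'b$ is L1 with $x\mapsto b'$, $y\mapsto 1_l$.
\end{itemize}
Next, L2 with $x\mapsto b'$, $z\mapsto 1_l$, followed by L11 for $b'$ and L1 for $b$, gives $(bb')(y1_l)=(y1_l)(bb')$. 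On the other hand, L13, L9 for $b'$ and L1 give $(bb')(y1_l)=y(bb')$. Hence $(y1_l)(bb')=y(bb')$, so L12 does hold for products.

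From this, $(bx)b'=b'((bx)1_l)=b'((x1_l)b)=(x1_l)(bb')=x(bb')=b(xb')$, which is L14. Also $(xb)b'=b'((b(x1_l))1_l)=((x1_l)1_l)(bb')=x(bb')$, which is L15. Transported through your dictionary, these give R9, R11 and R13--R15. The statement is correct only with R10 and R12 deleted.
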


		\begin{proposition}
			Let $(A,1_r)$ be a right-unital algebra, $x,y,z\in A$ such that the associator $[x,y,z]_{\mathrm{as}}$ is left-regular. The following statements hold:
			\begin{enumerate}[label=\textnormal{\arabic*)}]
				\item For all $b,b'\in 1_r\cdot AC_r(A),\ b\cdot b'=0$.
				\item All elements in $1_r\cdot AC_r(A)$ are 2-nilpotent.
				\item For all $b\in 1_r \cdot AC_r(A)$ and $x,y,z\in A$, $[x,y,z]_{\mathrm{as}}\cdot b\ $ is a left zero divisor.
			\end{enumerate}
		\end{proposition}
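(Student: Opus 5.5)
The plan is to deduce this from the left-unital version, \cref{prop:ACl_2_nilpotent}, by passing to the opposite algebra $A^{\mathrm{op}}=(A,\mu_{\mathrm{op}},1_r)$, as announced at the beginning of \cref{sec:hom-unities-1SU}. Write $x\ast y=y\cdot x$. First I record how the relevant objects transform.

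\textbf{Associator.} One has $[x,y,z]^{\ast}_{\mathrm{as}}=(x\ast y)\ast z-x\ast(y\ast z)=z\cdot(y\cdot x)-(z\cdot y)\cdot x=-[z,y,x]_{\mathrm{as}}$. Hence the set of associators of $A^{\mathrm{op}}$ equals the set of associators of $A$ up to sign.

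\textbf{Regularity.} An element that is left-regular in $A$ is right-regular in $A^{\mathrm{op}}$. Thus if $[x,y,z]_{\mathrm{as}}$ is left-regular in $A$, then $[z,y,x]^{\ast}_{\mathrm{as}}=-[x,y,z]_{\mathrm{as}}$ is right-regular in $A^{\mathrm{op}}$. This uses only linearity of $R_w$ in $w$.

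\textbf{The subspaces.} Next I would check that $AC_l(A^{\mathrm{op}})=AC_r(A)$ and $AC_l(A^{\mathrm{op}})\ast 1_r=1_r\cdot AC_r(A)$. For the first, translate the two defining conditions of $AC_l$ in $A^{\mathrm{op}}$:
\begin{itemize}
\item $a\ast(x\ast y)=x\ast(a\ast y)$ becomes $(y\cdot x)\cdot a=(y\cdot a)\cdot x$, which is $R_a\circ R_x=R_x\circ R_a$.
\item $(a\ast x)\ast(y\ast z)=a\ast((x\ast y)\ast z)$ becomes $(z\cdot y)\cdot(x\cdot a)=(z\cdot(y\cdot x))\cdot a$, which is $R_{xa}\circ R_y=R_a\circ R_{yx}$ applied to $z$.
\end{itemize}
These are exactly the defining conditions of $AC_r(A)$. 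For the second, $a\ast 1_r=1_r\cdot a$. Also the product $b\ast b'=b'\cdot b$, so "$b\ast b'=0$ for all $b,b'$" is the same as "$b\cdot b'=0$ for all $b,b'$".

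\textbf{Transferring the conclusions.} Now apply \cref{prop:ACl_2_nilpotent} to the left-unital algebra $(A^{\mathrm{op}},1_r)$ with the right-regular associator $[z,y,x]^{\ast}_{\mathrm{as}}$.
\begin{enumerate}
\item Items 1) and 2) transfer verbatim: products in $1_r\cdot AC_r(A)$ vanish, and in particular $b\cdot b=0$.
\item For item 3), the left-unital statement says $b\ast[u,v,w]^{\ast}_{\mathrm{as}}=-[w,v,u]_{\mathrm{as}}\cdot b$ is a right zero divisor in $A^{\mathrm{op}}$, that is, a left zero divisor in $A$. Signs do not affect zero divisibility, and $(u,v,w)$ ranges over all triples. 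So $[x,y,z]_{\mathrm{as}}\cdot b$ is a left zero divisor in $A$.
\end{enumerate}

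As a safety check, I could also argue item 3) directly. By the right-sided analogue of \cref{thm:ACL1_subalgebra}, $(1_r\cdot AC_r(A))^2\subseteq Ann^r_A([A,A,A]_{\mathrm{as}})$. The right analogue of the associativity relation gives $([x,y,z]_{\mathrm{as}}\cdot b)\cdot b=[x,y,z]_{\mathrm{as}}\cdot(b\cdot b)=0$. So either $[x,y,z]_{\mathrm{as}}\cdot b=0$, or it is a nonzero element annihilated on the right by $b$.

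\textbf{Main obstacle.} The only real work is the bookkeeping in identifying $AC_l(A^{\mathrm{op}})$ with $AC_r(A)$. The second defining condition involves a reversal of the roles of $y$ and $z$ in the stated form of $AC_r(A)$, and I expect checking that it matches exactly, including which side the unity multiplies, to be the step needing care. The rest is a direct transfer.
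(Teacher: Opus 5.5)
Your proposal is correct and takes exactly the route the paper intends: the paper gives no separate proof of this right-sided statement and instead transfers \cref{prop:ACl_2_nilpotent} through the opposite algebra $A^{\mathrm{op}}$. Your bookkeeping supplies what the paper leaves implicit, and it checks out: $[x,y,z]^{\ast}_{\mathrm{as}}=-[z,y,x]_{\mathrm{as}}$, left-regular becomes right-regular, $AC_l(A^{\mathrm{op}})=AC_r(A)$ with $AC_l(A^{\mathrm{op}})\ast 1_r=1_r\cdot AC_r(A)$, right zero divisors become left zero divisors, and your direct check $([x,y,z]_{\mathrm{as}}\cdot b)\cdot b=[x,y,z]_{\mathrm{as}}\cdot(b\cdot b)=0$ mirrors the paper's argument in the left-unital case.
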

		
		\begin{theorem}
			Let $(A,1_r)$ be a domain. Then, $1_r\cdot AC_r(A)=Z(A)$ if $(A,1_r)$ is associative, and $1_r\cdot AC_r(A)=\{0\}$ otherwise. The following holds:
			\begin{enumerate}[label=\textnormal{\arabic*)},ref=\textnormal{\arabic*)}]
				\item Any right-unital hom-associative domain is either trivial or associative.
				\item Any unital hom-associative domain is either trival or associative.
				\item Any unital hom-associative division algebra is either trivial or associative.
			\end{enumerate}
		\end{theorem}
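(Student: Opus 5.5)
The plan is to reduce to the left-unital result \cref{thm:no_domains_L} by passing to the opposite algebra. The opposite algebra $A^{\mathrm{op}}=(A,\mu_{\mathrm{op}},1_r)$ is left-unital with unity $1_r$. It is a domain exactly when $(A,1_r)$ is, since $L^{\mathrm{op}}_x=R_x$ and $R^{\mathrm{op}}_x=L_x$, so injectivity of all multiplication operators is preserved. It is associative exactly when $A$ is, because $[x,y,z]^{\mathrm{op}}_{\mathrm{as}}=-[z,y,x]_{\mathrm{as}}$, and it has the same center, $Z(A^{\mathrm{op}})=Z(A)$.

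The first step is to check the dictionary $AC_l(A^{\mathrm{op}})=AC_r(A)$ and $AC_l(A^{\mathrm{op}})\cdot_{\mathrm{op}}1_r=1_r\cdot AC_r(A)$. Writing $x\ast y=y\cdot x$, property \ref{item:ACL1} for $a$ in $A^{\mathrm{op}}$ reads $a\ast(x\ast z)=x\ast(a\ast z)$, that is, $(z\cdot x)\cdot a=(z\cdot a)\cdot x$, which is $R_a\circ R_x=R_x\circ R_a$. Property \ref{item:ACL2} reads $(a\ast x)\ast(y\ast z)=a\ast((x\ast y)\ast z)$, that is, $(z\cdot y)\cdot(x\cdot a)=(z\cdot(y\cdot x))\cdot a$, which is $R_{xa}\circ R_y=R_a\circ R_{yx}$. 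These are exactly the defining conditions of $AC_r(A)$. Finally, $a\ast 1_r=1_r\cdot a$, which gives the second identity.

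Next apply \cref{thm:no_domains_L} to $A^{\mathrm{op}}$:
\begin{itemize}
\item If $A$ is associative, the theorem gives $AC_l(A^{\mathrm{op}})=Z(A^{\mathrm{op}})$, hence $AC_r(A)=Z(A)$. It remains to see that $1_r\cdot AC_r(A)=AC_r(A)$. In a domain $Ann_A^r(A)=\{0\}$, so the splitting $AC_r(A)=1_r\cdot AC_r(A)\oplus Ann_A^r(A)$ stated above gives equality, and therefore $1_r\cdot AC_r(A)=Z(A)$.
\item If $A$ is not associative, the theorem gives $AC_l(A^{\mathrm{op}})\cdot_{\mathrm{op}}1_r=\{0\}$, that is, $1_r\cdot AC_r(A)=\{0\}$.
\end{itemize}

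The enumerated items then follow directly. By the right-unital bijection theorem, every HA-compatible map of $(A,1_r)$ is $R_b$ with $b\in 1_r\cdot AC_r(A)$.
\begin{itemize}
\item If $A$ is non-associative, $b=0$, so the structure is trivial; otherwise $A$ is associative. This gives item 1).
\item Items 2) and 3) follow because a two-sided unital algebra is right-unital and a division algebra is a domain. Alternatively, one can invoke the left-unital version directly.
\end{itemize}

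The only step requiring care is the translation of \ref{item:ACL2} into the second defining condition of $AC_r(A)$, where the variable order must be tracked correctly. As a fallback, one can instead repeat the argument of \cref{thm:no_domains_L} verbatim. The right-sided analogue of \cref{thm:ACL1_subalgebra} gives $b\cdot b\in Ann^r_A([A,A,A]_{\mathrm{as}})$. If $A$ is non-associative, pick a nonzero associator $w$; then $w\cdot(b\cdot b)=0$ forces $b\cdot b=0$ by injectivity of $L_w$, and injectivity of $R_b$ then forces $b=0$.
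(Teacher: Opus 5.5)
Your proposal is correct and takes the same route as the paper. The paper states this right-sided theorem without a separate proof and obtains it, like all right-sided results, by applying the left-unital statement (\cref{thm:no_domains_L}) to the opposite algebra $A^{\mathrm{op}}$. Your explicit checks that $AC_l(A^{\mathrm{op}})=AC_r(A)$ and $AC_l(A^{\mathrm{op}})\ast 1_r=1_r\cdot AC_r(A)$, and your passage from $AC_r(A)=Z(A)$ to $1_r\cdot AC_r(A)=Z(A)$ via $Ann_A^r(A)=\{0\}$, fill in details the paper leaves implicit.
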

		\begin{remark}
			This expands \cref{thm:no_domains_L}: considering all possible sidings of unity, one can draw stronger conclusions.
		\end{remark}

		\subsection{Comparing one and two-sided methods}\label{ssec:comparing}
		
		Consider the subspaces $AC_l(A)$ and $AC_r(A)$ in terms of their two-sided counterpart $AC(A)$. We use the following notations:
		\begin{align*}
			AC(A)&=\{a\in Z(A)\mid Aa\subseteq N(A) \textnormal{ is a two-sided ideal}\} \\
			&\hspace{4cm}=Z(A)\cap N(A)\cap Ann_A^l([A,A,A]_{\mathrm{as}}),\\
			AC_l(A)&=\{a\in A\mid a(xy)=x(ay),\ (ax)(yz)=a((xy)z)\ \forall x,y,z\in A  \},\\
			AC_r(A)&=\{a\in A\mid (xy)a=(xa)y,\ (xy)(za)=(x(yz))a\ \forall x,y,z\in A  \},\\
			AC_n(A)&=AC_l(A)\cap AC_r(A).
		\end{align*}
		
		\begin{theorem}
			Let $(A,1_\star)$ be a one-sided unital algebra. Then,
			\begin{equation*}AC(A)=AC_n(A)=AC_l(A)\cap AC_r(A).
			\end{equation*}
			$AC(A)$ consists of all left\textnormal{(}right\textnormal{)} hom-unities that make the product of a given right\textnormal{(}left\textnormal{)}-unital algebra hom-associative.
		\end{theorem}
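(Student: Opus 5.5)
We assume $(A,1_\star)$ is left-unital, writing $1_l$ for the unity. The right-unital case then follows by passing to $A^{\mathrm{op}}$. Under $\mu\mapsto\mu_{\mathrm{op}}$ the definitions of $AC_l(A)$ and $AC_r(A)$ are exchanged, and $AC(A)$ is preserved. Indeed, $Z(A)$ and $N(A)$ are symmetric notions, and $\mathrm{op}$ sends $[x,y,z]_{\mathrm{as}}$ to $-[z,y,x]_{\mathrm{as}}$. Moreover, for central $a$ the left and right annihilators of $[A,A,A]_{\mathrm{as}}$ agree, as remarked after \cref{thm:AC_two_sided_unital}. We prove the two inclusions of $AC(A)=AC_l(A)\cap AC_r(A)$ by direct manipulation of the defining identities.

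\emph{Inclusion $AC(A)\subseteq AC_l(A)\cap AC_r(A)$.} Let $a\in Z(A)\cap N(A)\cap Ann_A^l([A,A,A]_{\mathrm{as}})$.
\begin{itemize}
\item Property \ref{item:ACL1}: the nucleus and center conditions give $a(xy)=(ax)y=(xa)y=x(ay)$.
\item Property \ref{item:ACL2}: by the nucleus, $(ax)(yz)=a(x(yz))$. This equals $a((xy)z)$ because $a[x,y,z]_{\mathrm{as}}=0$.
\item The first $AC_r$ identity: $(xy)a=x(ya)=x(ay)=(xa)y$.
\item The second $AC_r$ identity: since $a$ is central and in the nucleus, $(xy)(za)=((xy)z)a=a((xy)z)$ and $(x(yz))a=a(x(yz))$. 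Their difference is $a[x,y,z]_{\mathrm{as}}=0$.
\end{itemize}
This inclusion uses no unitality at all.

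\emph{Inclusion $AC_l(A)\cap AC_r(A)\subseteq AC(A)$.} This is where the unity is used, and it is the key step.
\begin{itemize}
\item Centrality: setting $x=1_l$ in the first $AC_r$ identity $(xy)a=(xa)y$ gives $ya=ay$ for all $y$, so $a\in Z(A)$.
\item Left nucleus: combining $(xy)a=(xa)y$ with centrality yields $a(xy)=(ax)y$.
\item Right nucleus: Property \ref{item:ACL1} with centrality gives $(xy)a=a(xy)=x(ay)=x(ya)$.
\item Middle nucleus: $(xa)y=(ax)y=a(xy)=x(ay)$, combining the two previous steps.
\item Annihilator condition: with $a\in N(A)$, Property \ref{item:ACL2} reads $a(x(yz))=(ax)(yz)=a((xy)z)$. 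Hence $a[x,y,z]_{\mathrm{as}}=0$.
\end{itemize}
Therefore $a\in AC(A)$ by \cref{thm:AC_two_sided_unital}. Strictly, that theorem was stated for two-sided unital algebras, but we only use the explicit formula \eqref{eqn:ACA_formula} as the definition of the set. The main obstacle is making sure that a single centrality step, obtained from the right-sided identity via the left unity, is enough to obtain all three nuclear conditions. The computation above indicates that it is.

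\emph{The interpretive sentence.} An element $a\in AC(A)$ is central, so $L_a=R_a$. By \cref{thm:bijection_left_unital}\ref{item:HAstr_B2} and its right-sided analogue, $L_a=R_a$ is HA-compatible, so $a$ is a two-sided hom-unity of the hom-associative algebra it induces. Conversely, suppose an element $a$ acts as a hom-unity on the side opposite to the unity, say $R_a$ is HA-compatible on a left-unital $(A,1_l)$. Then \cref{tab:product_relations_one} forces $R_a=L_{a'}$ with $a'=a1_l\in AC_l(A)\cdot 1_l$. Comparing the identities for $a$ then places $a$ in $AC_l(A)\cap AC_r(A)$. This converse is the part whose bookkeeping I expect to need the most care, and I would verify it by evaluating at $1_l$ as in the proof of \cref{thm:bijection_left_unital}.
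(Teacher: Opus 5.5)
Your proof of the equality $AC(A)=AC_l(A)\cap AC_r(A)$ is correct and is essentially the paper's argument. You obtain centrality by evaluating the first $AC_r$ identity at the unity, derive the three nuclear conditions from centrality together with that identity and \ref{item:ACL1}, and then get $a[A,A,A]_{\mathrm{as}}=0$ from \ref{item:ACL2}. The paper handles both sidings of the unity directly rather than through $A^{\mathrm{op}}$, and disposes of $AC(A)\subseteq AC_l(A)\cap AC_r(A)$ with ``by definition''. Your opposite-algebra reduction and your explicit, unity-free verification of that inclusion are both fine.

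The gap is in your last paragraph. There you read the final sentence of the statement as a substantive claim: on a left-unital $(A,1_l)$, every $a$ with $R_a$ HA-compatible lies in $AC(A)$. The paper only justifies that elements of $AC(A)$ lie in $AC_l(A)$ and $AC_r(A)$, so this stronger reading needs its own argument, and you do not supply one.

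First, there is a slip. For $\alpha=R_a$ one has $\alpha(1_l)=R_a(1_l)=1_l\cdot a=a$, not $a\cdot 1_l$. So the table gives $R_a=L_a$, hence $a\in Z(A)$ at once, and \cref{thm:bijection_left_unital}\ref{item:HAstr_B1} gives $a\in AC_l(A)\cdot 1_l$.

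Second, ``comparing the identities'' does not yield $a\in AC_r(A)$. Given centrality and hom-associativity, both $AC_r$ identities are equivalent to $a\in N^l(A)$, i.e. $(ax)y=a(xy)$, and this needs a real argument. One that works:
\begin{itemize}
\item Put $u=x\cdot 1_l-x$. By centrality and \ref{item:ACL1}, $ua=a(x1_l)-xa=x(a1_l)-xa=0$, since $a1_l=1_la=a$.
\item Hom-associativity on the triple $(u,y,1_l)$ gives $(uy)a=(uy)\,\alpha(1_l)=\alpha(u)(y1_l)=(ua)(y1_l)=0$.
\item \cref{lem:unit-fundamental}(i) and \ref{item:ACL1} then give
\[
(ax)y=(x1_l)(ay)=a\bigl((x1_l)y\bigr)=a(xy)+a(uy)=a(xy).
\]
\end{itemize}
With $a\in N^l(A)\cap Z(A)$, both $AC_r$ identities follow. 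The first is $(xy)a=a(xy)=(ax)y=(xa)y$. The second is $(xy)(za)=(xy)(az)=(ax)(yz)=a(x(yz))=(x(yz))a$. Hence $a\in AC_l(A)\cap AC_r(A)=AC(A)$.
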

		\begin{proof}
			For any $a\in AC_n(A)$, commutation relations are
			\begin{align*}
				\textbf{Left unity: }(x\cdot a)\cdot y&=(x\cdot y)\cdot a\overset{x\mapsto 1_\star}{=}y\cdot a=a\cdot y\Rightarrow a\in Z(A). \\
				\textbf{Right unity: }x\cdot (a\cdot y)&=a\cdot (\cdot xy)\overset{y\mapsto 1_\star}{=}x\cdot a=a\cdot x\Rightarrow a\in Z(A).
			\end{align*}
			The association relations are as follows:
			\begin{align*}
				[a,x,y]_{\mathrm{as}}&=(a\cdot x)\cdot y-a\cdot (x\cdot y)\overset{ Z(A)}{=}(x\cdot a)\cdot y-(x\cdot y)\cdot a\overset{ AC_r(A)}{=}0,\\
				[x,a,y]_{\mathrm{as}}&=(x\cdot a)\cdot y-x\cdot (a\cdot y)\overset{ AC_n(A)}{=}(x\cdot y)\cdot a-a\cdot (x\cdot y)\overset{ Z(A)}{=}0,\\
				[x,y,a]_{\mathrm{as}}&=(x\cdot y)\cdot a-x\cdot (y\cdot a)\overset{ AC_r(A)}{=}(x\cdot a)\cdot y-x\cdot (y\cdot a)\\
				&\hspace{2.2cm}\overset{ Z(A)}{=}(x\cdot a)\cdot y-x\cdot (a\cdot y)=[x,a,y]_{\mathrm{as}}=0,
			\end{align*}
			where the small text indicates the subspace whose properties are used. It follows that $a\in Z(A)\cap N(A)$. Thus,  $AC_n(A)\subseteq Z(A)\cap N(A)$. Therefore, for all $x,y,z\in A$, we can move the parentheses as $(ax)(yz)=a(x(yz))= a((xy)z)$. It then follows that $a[x,y,z]_{\mathrm{as}}=0$, and thus
			\begin{equation*}
				AC_n(A)\subseteq Z(A)\cap N(A)\cap Ann_A^l([A,A,A]_{\mathrm{as}})=AC(A).
			\end{equation*}
			By definition, all elements of $AC(A)$ are left hom-unities (in $AC_l(A)$) and right hom-unities (in $AC_r(A)$). Thus,  $AC(A)\subseteq AC_l(A)\cap AC_r(A)= AC_n(A)$. This completes the proof.
		\end{proof}
		\begin{remark}
			Observe that in two-sided unital algebras, $AC_l(A)\cdot 1_{lr}=AC_l(A)$ and $1_{lr}\cdot AC_r(A)=AC_r(A)$. Thus, both $AC_l(A)$ and $AC_r(A)$ are subal\-gebras. The two-sided unity $1_{lr}$ makes it so that $Ann_A^l(A)=Ann_A^r(A)=\{0\}$.
		\end{remark}
		
		\begin{example}[Hom-Leibniz algebras]\label{ssec:LeibnizHom}
			By hom-Leibniz hom-algebra we understand a hom-algebra $(\mathcal{L},[\cdot,\cdot],\alpha)$ where every multiplication operator is a twisted \textit{hom-derivation} on the algebra. We distinguish between left and right hom-Leibniz algebras in a manner similar to Leibniz algebras.
			\begin{align*}
				\textbf{Left hom-Leibniz: } [\alpha(x),[y,z]]=[[x,y],\alpha(z)]+[\alpha(y),[x,z]]\\
				\textbf{Right hom-Leibniz: } [[x,y],\alpha(z)]=[[x,z],\alpha(y)]+[\alpha(x),[y,z]]
			\end{align*}
			
			Unitality in these algebras behaves differently from their untwisted counterparts because of the action of $\alpha$.
			\begin{proposition}\label{prop:LeibnizHomCrossed}
				Let $(\mathcal{L},[\cdot,\cdot],\alpha,1_\star)$ be a unital hom-Leibniz algebra that is also hom-associative. The following statements hold:
				\begin{enumerate}[label=\textnormal{\arabic*)},ref=\textnormal{\arabic*}]
					\item\label{item:HLA1} If $\mathcal{L}$ is right hom-Leibniz and $1_\star$ is a left unity, then $\alpha(1_\star)\in Ann_\mathcal{L}^r(\mathcal{L})$.
					\item\label{item:HLA2} If $\mathcal{L}$ is left hom-Leibniz and $1_\star$ is a right unity, then $\alpha(1_\star)\in Ann_\mathcal{L}^l(\mathcal{L})$.
					\item\label{item:HLA3} If $\mathcal{L}$ is right \textnormal{(}left\textnormal{)} hom-Leibniz and $1_\star$ is a right \textnormal{(}left\textnormal{)} unity, then $\alpha=0$.
					\item\label{item:HLA4}  A left-unital {\rm(}resp. right-unital\/{\rm)} hom-algebra $(\mathcal{L},[\cdot,\cdot],\alpha,1_l)$, $\alpha\neq 0$, cannot be a right hom-Leibniz \textnormal{(}resp. left hom-Leibniz\textnormal{)} and hom-associative simultaneously.
				\end{enumerate}
			\end{proposition}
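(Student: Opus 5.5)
The approach is to combine the two defining identities directly. Hom-associativity reads $[[x,y],\alpha(z)]=[\alpha(x),[y,z]]$ for all $x,y,z\in\mathcal{L}$. Comparing this with the right, respectively left, hom-Leibniz identity makes one of the three terms vanish identically. We then contract variables to the unity, as in \cref{lem:unit-fundamental}. All four items should follow from these two "vanishing" identities.

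\textbf{Right hom-Leibniz case.} Subtracting hom-associativity from $[[x,y],\alpha(z)]=[[x,z],\alpha(y)]+[\alpha(x),[y,z]]$ gives
\begin{equation*}
[[x,z],\alpha(y)]=0\quad \text{for all } x,y,z\in\mathcal{L}.
\end{equation*}
\begin{itemize}
\item For \ref{item:HLA1}, take $1_\star=1_l$ and contract $x\mapsto 1_l$. This gives $[z,\alpha(y)]=0$ for all $y,z$, so $\alpha(\mathcal{L})\subseteq Ann^r_{\mathcal{L}}(\mathcal{L})$, and in particular $\alpha(1_l)\in Ann^r_{\mathcal{L}}(\mathcal{L})$. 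In fact more follows: putting $z=1_l$ gives $\alpha(y)=[1_l,\alpha(y)]=0$, so $\alpha=0$ already here.
\item For the right-unital part of \ref{item:HLA3}, take $1_\star=1_r$ and contract $z\mapsto 1_r$. This gives $[x,\alpha(y)]=0$ for all $x,y$. By \cref{lem:unit-fundamental}, $\alpha=R_{\alpha(1_r)}$, so $\alpha(y)=[y,\alpha(1_r)]$. The latter is $0$ by the previous identity with $x=y$ and $y\mapsto 1_r$. Hence $\alpha=0$.
\end{itemize}

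\textbf{Left hom-Leibniz case.} Symmetrically, subtracting hom-associativity from $[\alpha(x),[y,z]]=[[x,y],\alpha(z)]+[\alpha(y),[x,z]]$ gives
\begin{equation*}
[\alpha(y),[x,z]]=0\quad \text{for all } x,y,z\in\mathcal{L}.
\end{equation*}
\begin{itemize}
\item For \ref{item:HLA2}, take a right unity and contract $z\mapsto 1_r$. This gives $[\alpha(y),x]=0$, so $\alpha(1_r)\in Ann^l_{\mathcal{L}}(\mathcal{L})$. Again $x=1_r$ even yields $\alpha(y)=[\alpha(y),1_r]=0$.
\item For the left-unital part of \ref{item:HLA3}, contract $x\mapsto 1_l$ to get $[\alpha(y),z]=0$. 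Then use $\alpha=L_{\alpha(1_l)}$, that is $\alpha(z)=[\alpha(1_l),z]=0$.
\end{itemize}

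\textbf{Item \ref{item:HLA4}.} This is the contrapositive of the strengthened forms of \ref{item:HLA1} and \ref{item:HLA2} noted above. A left-unital right hom-Leibniz hom-associative algebra must have $\alpha(y)=[1_l,\alpha(y)]=0$. Likewise, a right-unital left hom-Leibniz hom-associative algebra must have $\alpha(y)=[\alpha(y),1_r]=0$. Either way this contradicts $\alpha\neq 0$.

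The only delicate point is bookkeeping. For each combination of siding we must check which contraction is available and which form of \cref{lem:unit-fundamental} ($\alpha=L_{\alpha(1_l)}$ versus $\alpha=R_{\alpha(1_r)}$) to invoke. There is no genuine obstacle beyond that.
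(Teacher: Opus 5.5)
Your proof is correct, but it takes a different route from the paper.

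\textbf{The paper's route.} The paper never plays the hom-associativity identity directly against the hom-Leibniz identity. Instead it substitutes the unity into the hom-Leibniz identity and uses hom-associativity only through its unital consequence from \cref{lem:unit-fundamental}, namely $\alpha=L_{\alpha(1_l)}$ or $\alpha=R_{\alpha(1_r)}$. This gives, for instance, $\alpha([x,y])=[x,\alpha(y)]-[y,\alpha(x)]$ in the left-unital right hom-Leibniz case and $\alpha([x,y])=[x,\alpha(y)]+[\alpha(x),y]$ in the right-unital case, followed by one more contraction. Item 4 is then proved by a separate computation that additionally uses Property L1 ($a\cdot(x\cdot y)=x\cdot(a\cdot y)$ for $a=\alpha(1_l)$).

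\textbf{Your route.} You cancel hom-associativity against the hom-Leibniz identity once, obtaining the global identities $[[x,z],\alpha(y)]=0$ in the right case and $[\alpha(y),[x,z]]=0$ in the left case. Everything else is contraction.

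\textbf{What each buys.} Your version is uniform, and items 1 and 2 need no appeal to \cref{lem:unit-fundamental}. Those two items also come out in the stronger form $\alpha=0$, so item 4 is an immediate corollary rather than a separate argument. It also makes transparent that all four side combinations force $\alpha=0$. The paper's version, for items 1--3, only uses that $\alpha$ is multiplication by $\alpha(1_\star)$. Those arguments therefore apply to unital hom-Leibniz algebras that are merely hom-unital with hom-unity $\alpha(1_\star)$, in line with the paper's theme of reducing hom-associative structures to hom-unities.
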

			\begin{proof}
				We show the proof of \ref{item:HLA1} and \ref{item:HLA3} for right hom-Leibniz algebras. Applying the hom-Leibniz identity to three elements $\{1_l,x,y\}$, where $1_l$ is the left unity yields
				\begin{enumerate}[label=,leftmargin=*]
					\item\textbf{Left-unital: } $[x,\alpha(y)]=[[1_l,x],\alpha(y)]=[[1_l,y],\alpha(x)]+[\alpha(1_l),[x,y]]$.
				\end{enumerate}
				By unitality and hom-associativity, we have $\alpha(x)=[\alpha(1_l),x]$ for all $x\in A$ (see \cref{tab:product_relations_one}), and thus the hom-Leibniz identity becomes
				\begin{equation*}
					[x,\alpha(y)]=[y,\alpha(x)]+\alpha([x,y])\Rightarrow \alpha([x,y])=[x,\alpha(y)]-[y,\alpha(x)].
				\end{equation*}
				Contracting $x\mapsto 1_l$ yields
				\begin{equation*}
					\alpha(y)=\alpha(y)-[y,\alpha(1_l)], \text{ and thus }\ [y,\alpha(1_l)]=0, \text{ that is, } \alpha(1_l)\in Ann_\mathcal{L}^r(\mathcal{L}).
				\end{equation*}
				The right-unital case is similar to the left-unital case. Using the hom-Leibniz identity, we obtain
				\begin{enumerate}[label=,leftmargin=*]
					\item\textbf{Right-unital: } $\alpha([x,y])=[[x,y],\alpha(1_r)]=[[x,1_r],\alpha(y)]+[\alpha(x),[y,1_r]]$.
				\end{enumerate}
				By right-unitality, we have
				\begin{equation*}
					\alpha([x,y])=[x,\alpha(y)]+[\alpha(x),y].
				\end{equation*}
				Contracting $y\mapsto 1_r$ yields
				\begin{equation*}
					\alpha(x)=\alpha([x,1_r])=[x,\alpha(1_r)]+[\alpha(x),1_r]=\alpha(x)+\alpha(x),
				\end{equation*}
				and thus, $\alpha(x)=0$. This proves statement \ref{item:HLA3}. Finally, let $(\mathcal{L},[\cdot,\cdot],\alpha,1_l)$ be left-unital hom-associative, $\alpha\neq 0$. Then, $\alpha=L_a$ for $a=\alpha(1_l)\in \mathcal{L}$ (see \cref{tab:product_relations_one}). By the hom-Leibniz identity and $a$ satisfying Property \ref{item:ACL1}, we have
				\begin{align*}
					[a,[x,y]]&=[\alpha(1_l),[x,y]]=[[1_l,x],\alpha(y)]-[[1_l,y],\alpha(x)]=[x,\alpha(y)]-[y,\alpha(x)],\\
					[a,[x,y]]&\overset{\ref{item:ACL1}}{=}[x,[a,y]]=[x,\alpha(y)].
				\end{align*}
				Comparing these two expressions yields
				\begin{equation*}
					[x,\alpha(y)]-[y,\alpha(x)]=[x,\alpha(y)], \text{ and thus } [y,\alpha(x)]=0.
				\end{equation*}
				In particular, contracting $y\mapsto 1_l$ yields $\alpha(x)=0$. This contradiction proves statement \ref{item:HLA4}. The proofs for left hom-Leibniz algebras, statement \ref{item:HLA2} and the corresponding part of statements \ref{item:HLA3} and \ref{item:HLA4}, are analogous, up to using the right-unital version of Property \ref{item:ACL1} (see \cref{tab:ACR}). This completes the proof.
			\end{proof}
		\end{example}

		\section{Hom-unities in non-unital algebras}\label{sec:non-unital}
		
		In unital algebras, finding hom-associative structures is equivalent to finding hom-unity. The unity ensures a bijection between them and the compatible twisting maps. This bijection is lost in the case of non-unital algebras. Thus, for non-unital algebras it is unclear if or when the equality
		\begin{equation*}
			AC(A)=Z(A)\cap N(A)\cap Ann_A^l([A,A,A]_{\mathrm{as}})
		\end{equation*}
		holds, where $AC(A)$ is the subspace $\{a\in Z(A)\mid Aa\subseteq N(A) \textnormal{ is a two-sided ideal}\}$. We cannot guarantee that these are all the compatible two-sided hom-unities, and moreover, we cannot guarantee that all elements of $AC(A)$ are hom-unities. This section aims to construct a subspace of hom-unities using both the original definition of $AC(A)$ and the formula computed for the unital algebras.  Consider the following subspaces:
		\begin{align*}
			& HU_t(A)=\{a\in A\mid (A,L_a) \textnormal{ is hom-associative or } (A,R_a) \textnormal{ is hom-associative}\}. \\
			& HU_t^l(A)=\{a\in A\mid (A,L_a) \textnormal{ is hom-associative} \}. \\
			& HU_t^r(A)=\{a\in A\mid (A,R_a) \textnormal{ is hom-associative} \}. \\
			& HU_n(A)=Z(A)\cap N(A)\cap Ann_A^l([A,A,A]_{\mathrm{as}}). \\
			& HU_n^l(A)=Z_A(A\cdot A)\cap N^l(A)\cap N^m(A)\cap Ann_A^l([A,A,A]_{\mathrm{as}}). \\
			& HU_n^r(A)=Z_A(A\cdot A)\cap N^m(A)\cap N^r(A)\cap Ann_A^r([A,A,A]_{\mathrm{as}}). \\
			& HU_n^l(A)\cap HU_n^r(A)=Z_A(A\cdot A)\cap N(A)\cap Ann_A^l([A,A,A]_{\mathrm{as}}).
		\end{align*}
		The set $HU_t(A)$ contains all possible hom-unities of $A$, while $HU_n(A)$ is the  formulation of the subalgebra $AC(A)$ explored in \cref{sec:hom-unities-2SU}. It is then instrumental to understand how these sets interact with one another and describe explicit algebraic relations that highlight the structure of $AC(A)$, $HU_n(A)$ or their subspaces.
		\begin{proposition}\label{prop:NU_subsets}
			For any algebra $A$, the following statements hold:
			\begin{enumerate}[label=\textnormal{\arabic*)},ref=\textnormal{\arabic*}]
				\item $HU_n(A)\subseteq AC(A)$.
				\item\label{item:subsets_2} $HU_n(A)\subseteq HU_t(A)$.
				\item\label{item:subsets_3} If $A$ is two-sided unital, then $AC(A)=HU_n(A)=HU_t(A)$.
				\item $HU_n^l(A)\subseteq HU_t^l(A)$.
				\item $HU_n^r(A)\subseteq HU_t^r(A)$.
				\item $HU_n(A)\subseteq HU_n^l(A)\cap HU_n^r(A)$. Equality holds when $Z_A(A\cdot A)=Z(A)$. In particular, this is the case when $A$ is unital.
				\item $HU_n(A)$ is a subalgebra of $A$ and a two-sided ideal of $Z(A)\cap N(A)$.
			\end{enumerate}
			Moreover, if $A$ satisfies one of the following conditions
			\begin{enumerate}[label=\textnormal{\roman*.}]
				\item $A$ has no nontrivial subalgebras,
				\item $Z(A)\cap N(A)$ is simple,
			\end{enumerate}
			then one of the following properties holds:
			\begin{enumerate}[label=\textnormal{\arabic*)}]
				\setcounter{enumi}{7}
				\item $HU_n(A)=\{0\}$,
				\item $A$ is commutative associative.
			\end{enumerate}
			In addition, $HU_n(A)=A$ if and only if $A$ is commutative and associative.
		\end{proposition}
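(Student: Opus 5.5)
Most items reduce to direct computation with the defining relations, so I go through them in order. The workhorse is the following calculation. For $a\in Z(A)\cap N(A)$ and all $x,y,z\in A$, centrality and nuclearity give
\begin{align*}
(ax)(yz)&=a(x(yz)),\\
(xy)(az)&=(xy)(za)=((xy)z)a=a((xy)z).
\end{align*}
Hence $[x,y,z]^{L_a}_{\mathrm{as}}=-a[x,y,z]_{\mathrm{as}}$. Since $L_a=R_a$ for central $a$, it follows that $a\in HU_n(A)$ makes both $(A,L_a)$ and $(A,R_a)$ hom-associative, which gives (2). For (4) I redo the computation of \cref{prop:ACL1_subspace} without a unity. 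The conditions $a\in N^l(A)$, $a\in N^m(A)$ and $a$ commuting with $A\cdot A$ are exactly what that computation used: $(ax)(yz)=a(x(yz))$ by left nuclearity, and $(xy)(az)=((xy)a)z=(a(xy))z=a((xy)z)$ by middle nuclearity, commutation with $xy$, and left nuclearity. So the hom-associator is again $\pm a[x,y,z]_{\mathrm{as}}=0$. Item (5) is the mirror argument with $R_a$, using $N^m(A)$, $N^r(A)$ and $Ann^r_A$.

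For (1), take $a\in HU_n(A)$. Then $Aa\subseteq N(A)$ by the three associator identities in the proof of \cref{thm:AC_two_sided_unital}: for instance $[xa,y,z]_{\mathrm{as}}=a[x,y,z]_{\mathrm{as}}=0$, and those identities used only $a\in Z(A)\cap N(A)$. The ideal property of $Aa$ is also given there and likewise needs no unity.

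For (3), \cref{thm:AC_two_sided_unital} gives $AC(A)=HU_n(A)$. By \cref{tab:product_relations_two} and the bijection of Fr\'egier--Gohr, every HA-compatible map is $L_a$ with $a\in AC(A)$. So $HU_t(A)$, which consists of elements $a$ with $L_a$ or $R_a$ compatible, lies in $AC(A)$. Here I must check that $L_a$ compatible forces $a=\alpha(1_{lr})$, which holds since $L_a(1_{lr})=a$.

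For (6), the inclusion is immediate from $Z(A)\subseteq Z_A(A\cdot A)$. Equality follows when the two centralizers coincide. If $A$ is unital, then $A\cdot A=A$ (since $x=1_\star x$ or $x=x1_\star$), so equality holds.

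For (7), I need closure under products. Let $a,b\in HU_n(A)$. Centrality of $ab$ follows from nuclearity: $(ab)x=a(bx)=a(xb)=(ax)b=(xa)b=x(ab)$. For nuclearity of $ab$, I expand associators such as $[ab,x,y]_{\mathrm{as}}$ by moving $a$ and $b$ out one at a time, using that $a,b$ are nuclear and central. The annihilator condition is $(ab)[x,y,z]_{\mathrm{as}}=a(b[x,y,z]_{\mathrm{as}})=0$. For the ideal property in $Z(A)\cap N(A)$, take $c\in Z(A)\cap N(A)$: then $ca$ lies in $Z(A)\cap N(A)$ by the same expansion, and $(ca)[x,y,z]_{\mathrm{as}}=c(a[x,y,z]_{\mathrm{as}})=0$. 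I expect the main obstacle to be the careful bookkeeping in showing that products of central nuclear elements remain nuclear.

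For the final part, suppose $A$ has no nontrivial subalgebras. By (7), $HU_n(A)$ is either $\{0\}$ or $A$. Under condition ii., $HU_n(A)$ is an ideal of the simple algebra $Z(A)\cap N(A)$, so it is $\{0\}$ or $Z(A)\cap N(A)$. In the latter case I still need to reach $A$, and I expect this to be a gap in the argument unless the intended reading is ``$HU_n(A)=Z(A)\cap N(A)$''; I would state that reduction explicitly. It then remains to show $HU_n(A)=A$ iff $A$ is commutative associative. If $HU_n(A)=A$, then $Z(A)=N(A)=A$, so $A$ is commutative associative. Conversely, a commutative associative $A$ has $[A,A,A]_{\mathrm{as}}=0$ and $Z(A)=N(A)=A$, hence $HU_n(A)=A$.
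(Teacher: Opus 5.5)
Your proposal is correct wherever the statement is actually true, and it follows the paper's route. Items (1), (2), (4)--(6) come from the same unit-free manipulations of central and nuclear elements. You also supply details the paper skips: it cites Fr\'egier--Gohr for (3), and in (7) it calls closure of $Z(A)\cap N(A)$ under products ``well known''. Two cosmetic points. In (3), the case where $R_a$ is compatible is handled identically, since $a=R_a(1_{lr})$ and $R_a=L_{R_a(1_{lr})}$. With the paper's convention $[x,y,z]^{\alpha}_{\mathrm{as}}=(xy)\alpha(z)-\alpha(x)(yz)$ one gets $[x,y,z]^{L_a}_{\mathrm{as}}=+a[x,y,z]_{\mathrm{as}}$ rather than $-a[x,y,z]_{\mathrm{as}}$, which changes nothing.

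Your suspicion about condition ii.\ is right, and the gap cannot be closed, because the claim is false there. The paper's proof has the same defect. It asserts that under either hypothesis $HU_n(A)=\{0\}$ or $HU_n(A)=A$, but this only follows under i.

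For a counterexample, take $A=\mathbb{H}$ over $\mathbb{R}$. Here $Z(\mathbb{H})\cap N(\mathbb{H})=\mathbb{R}$ is a field, hence simple. Also $HU_n(\mathbb{H})=AC(\mathbb{H})=\mathbb{R}\cdot 1\neq\{0\}$, as computed in the paper's Cayley--Dickson example. Yet $\mathbb{H}$ is not commutative, so neither alternative holds; $M_2(\mathbb{F})$ works the same way.

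Under ii.\ the correct conclusion is the one you propose: $HU_n(A)=\{0\}$ or $HU_n(A)=Z(A)\cap N(A)$. You should state the result in that form rather than try to reach $HU_n(A)=A$.
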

		
		\begin{proof}
			Let $a\in Z(A)\cap N(A)\cap Ann_A^l([A,A,A]_{\mathrm{as}})$.
			\begin{align*}
				[a\cdot x,y,z]_{\mathrm{as}}&=((a\cdot x)\cdot y)\cdot z-(a\cdot x)\cdot (y\cdot z)=(a\cdot (x\cdot y))\cdot z-a\cdot (x\cdot (y\cdot z))\\
				&=a\cdot ((x\cdot y)\cdot z-x\cdot (y\cdot z))=0,\\
				[x,a\cdot y,z]_{\mathrm{as}}&=(x\cdot (a\cdot y))\cdot z-x\cdot ((a\cdot y)\cdot z)=((x\cdot a)\cdot y)\cdot z-(x\cdot a)\cdot (y\cdot z)\\
				&=(a\cdot (x\cdot y))\cdot z-(a\cdot x)\cdot (y\cdot z)=a\cdot ((x\cdot y)\cdot z-x\cdot (y\cdot z))=0,\\
				[x,y,a\cdot z]_{\mathrm{as}}&=(x\cdot y)\cdot (a\cdot z)-x\cdot (y\cdot (a\cdot z))=(x\cdot y)\cdot (z\cdot a)-x\cdot (y\cdot (z\cdot a))\\
				&=((x\cdot y)\cdot z)\cdot a-x\cdot ((y\cdot z)\cdot a)=((x\cdot y)\cdot z-x\cdot (y\cdot z))a=0.
			\end{align*}
			It follows that $Aa\subseteq N(A)$. It is an ideal since, for every $x,y\in A$, we have $(x\cdot a)\cdot y=(a\cdot x)\cdot y=a\cdot (x\cdot y)\in N(A)$ and $y\cdot (x\cdot a)=(y\cdot x)\cdot a\in N(A)$. Statement \ref{item:subsets_2} follows. Statement \ref{item:subsets_3} is the topic of \cite{FreGohr:UnitCond}.
			
			Let $a\in HU_n^l(A)$. We can express the hom-associator in $(A,L_a)$ as
			\begin{multline*}
				(a\cdot x)\cdot (y\cdot z)=a\cdot (x\cdot (y\cdot z)), (x\cdot y)\cdot (a\cdot z)=((x\cdot y)\cdot a)\cdot z=(a\cdot (x\cdot y))\cdot z=a\cdot ((x\cdot y)\cdot z) \\
				\Rightarrow [x,y,z]_{\mathrm{as}}^{L_a}=a\cdot ((x\cdot y)\cdot z)-a\cdot (x\cdot (y\cdot z))=a[x,y,z]_{\mathrm{as}}=0.
			\end{multline*}
			Let $b\in HU_n^r(A)$. We can express the hom-associator in $(A,R_b)$ as
			\begin{multline*}
				(x\cdot y)\cdot (z\cdot b)=((x\cdot y)\cdot z)\cdot b, (x\cdot b)\cdot (y\cdot z)=x\cdot (b\cdot (y\cdot z))=x\cdot ((y\cdot z)\cdot b)\\
				=(x\cdot (y\cdot z))\cdot b
				\Rightarrow [x,y,z]_{\mathrm{as}}^{R_a}=((x\cdot y)\cdot z)\cdot b-(x\cdot (y\cdot z))\cdot b=[x,y,z]_{\mathrm{as}}b=0.
			\end{multline*}
			Now, observe that $Z(A)\subseteq Z_A(A\cdot A)$. Thus, $HU_n(A)\subseteq HU_n^l(A)\cap HU_n^r(A)$.
			
			It is well known that $Z(A)\cap N(A)$ is a subalgebra of any given algebra. Let $a,a'\in HU_n(A),\ b\in A,\ y\in Z(A)\cap N(A)$.
			For any element $X\in [A,A,A]_{\mathrm{as}}$, it holds that $a\cdot X=0$. Then $(a\cdot b)\cdot X=(b\cdot a)\cdot X=b\cdot (a\cdot X)=0$. In particular, $(a\cdot y)\cdot X=(y\cdot a)\cdot X=y\cdot (a\cdot X)=0$ and $(a\cdot a')\cdot X=(a'\cdot a)\cdot X=a'\cdot (a\cdot X)=0$. That is, $HU_n(A)$ is closed under products with elements of $Z(A)\cap N(A)$, and thus it is a two-sided ideal of $Z(A)\cap N(A)$, and in particular, a subalgebra of $A$.

			Finally, if $A$ has no non-trivial subalgebras or if $Z(A)\cap N(A)$ is simple, then by hypothesis, $HU_n(A)=\{0\}$ or $HU_n(A)=A$. If $HU_n(A)=A$, then $Z(A)\cap N(A)=A$, and thus $A$ is commutative and associative.
			
			Moreover, if $A$ is commutative associative, then for any $a,x,y,z\in A$ it holds that $a\cdot x\cdot y\cdot z=x\cdot y\cdot a\cdot z$, that is, $L_a(x)\cdot(y\cdot z)=(x\cdot y)\cdot L_a(z)$, or $a\in HU_n(A)$.
			
		\end{proof}
		\begin{remark}
			It is an interesting open question whether $HU_n^l(A)$ and $HU_n^r(A)$ contain all hom-unities that induce hom-associative structures over a given non-unital algebra $A$. Computing these subspaces involves finding the centralizer, partial nuclei, and associators within the algebra. This appears to be a straightforward problem based on the structure constants, as is the case with hom-associative structures over unital algebras.
		\end{remark}

		Elements of $AC(A)$ need not be two-sided hom-unities because of non-unitality. Indeed, for every $a\in AC(A)$ and all $x,y,z\in A$, $Aa\subseteq N(A)$ is a two-sided ideal, which implies the following:
		\begin{enumerate}[label=\textnormal{\arabic*)}]
			\item $a\cdot x=x\cdot a\in N(A)$,
			\item $y\cdot (a\cdot x)=(a\cdot x)\cdot y\in Aa=a A$ by centrality of $a$.
			\item In particular, $(x\cdot y)\cdot (a\cdot z)=a\cdot t^a_{xy,z}$ and $(a\cdot x)\cdot (y\cdot z)=a\cdot t^a_{x,yz}$ for some $t^a_{xy,z}$, $t^a_{x,yz}\in A$.
		\end{enumerate}
		Hom-associativity relations have the form $[x,y,z]_{\mathrm{as}}^{L_a}=L_a(t^a_{xy,z}-t^a_{x,yz})$, thus the subspace $AC(A)\cap Ann_A^l(\{t^a_{xy,z}-t^a_{x,yz}\mid x,y,z\in A\})$ contains all two-sided hom-unities that are inside $AC(A)$.
		\begin{example}
			The following cases satisfy $t^a_{xy,z}=(x\cdot y)\cdot z$, $t^a_{x,yz}=x\cdot (y\cdot z)$, and a left multiplication $L_a$ induces a hom-associative structure if and only if $a\in Ann_A^l([A,A,A]_{\mathrm{as}})$:
			\begin{enumerate}[label=\textnormal{\arabic*)}]
				\item All unital algebras. Indeed, $[x,y,z]_{\mathrm{as}}^{L_a}=a[x,y,z]_{\mathrm{as}}=a\cdot ((x\cdot y)\cdot z)-a\cdot (x\cdot (y\cdot z))$, as $a=1\cdot a\in N(A)$ (on any side).
				\item Non-unital $A$ with $a$ idempotent, or $\lambda a=a^2\in a A\subseteq N(A),\ \lambda\in\mathbb{F}$.
				\begin{align*}
					& (a\cdot x)\cdot (y\cdot z)=a\cdot (x\cdot (y\cdot z)), (x\cdot y)\cdot (a\cdot z)=((x\cdot y)\cdot a)\cdot z\\
					& \hspace{5cm} =(a\cdot (x\cdot y))\cdot z=a\cdot ((x\cdot y)\cdot z)\\
					& \Rightarrow [x,y,z]_{\mathrm{as}}^{L_a}=a\cdot ((x\cdot y)\cdot z)-a\cdot (x\cdot (y\cdot z))=a[x,y,z]_{\mathrm{as}}.
				\end{align*}
			\end{enumerate}
		\end{example}

		\begin{example}[Associative algebras, non-unital]
			
			Let $A$ be associative and not necessarily commutative. Observe that
			\begin{multicols}{2}
				$HU_n^l(A)=Z_A(A\cdot A)$,
				
				$HU_n^r(A)=Z_A(A\cdot A)$,
				
				$HU_n^l(A)\cap HU_n^r(A)=Z_A(A\cdot A)$,
				
				$HU_n(A)=Z(A)$.
			\end{multicols}
			Even without commutativity, all these hom-unities are two-sided. These need not be all hom-unities over these algebras, as the exact hom-associativity condition is $a\cdot x\cdot y\cdot z=x\cdot y\cdot a\cdot z$, or $(a\cdot x\cdot y-x\cdot y\cdot a)\cdot z=0$ for all $x,y,z\in A$.  We have
			\begin{align*}
				HU_t^l(A)&=\{a\in A\mid (a\cdot x\cdot y-x\cdot y\cdot a)\cdot z=0,\ x,y,z\in A\}, \\
				HU_t^r(A)&=\{a\in A\mid x\cdot (a\cdot y\cdot z-y\cdot z\cdot a)=0,\ x,y,z\in A\},  \\
				HU_t(A)&=HU_t^l(A)\cap HU_t^r(A),\\
				HU_n^l(A)&\cap HU_n^r(A)=\{a\in A\mid [a,x\cdot y]=0,\ x,y\in A\}.
			\end{align*}
			$HU_t^l(A)$ consists of elements such that $[a,A\cdot A]\subseteq Ann_A^l(A)$.
			$HU_t^r(A)$ consists of elements such that $[a,A\cdot A]\subseteq Ann_A^r(A)$. $HU_t(A)$ consists of elements such that $[a,A\cdot A]\subseteq Ann_A^l(A)\cup Ann_A^r(A)$.
			
			\begin{proposition}\label{prop:HUnA_assoc_no_ann}
				Let $A$ be an associative algebra.
				\begin{enumerate}[label=\textnormal{(\roman*)}]
					\item If $Ann_A^l(A)=0$, and in particular if $A$ has a right-regular element, then $$HU_t(A)=HU_n(A)=Z(A).$$
					\item If $Ann_A^r(A)=0$, and in particular if $A$ has a left-regular element, then $HU_t(A)=HU_n(A)=Z(A)$.
				\end{enumerate}
			\end{proposition}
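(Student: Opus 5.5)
The plan is to reduce everything to the commutativity condition the paper already derived for associative algebras. Since $A$ is associative, $N(A)=A$ and $[A,A,A]_{\mathrm{as}}=\{0\}$, so $Ann_A^l([A,A,A]_{\mathrm{as}})=A$. The definition then immediately gives $HU_n(A)=Z(A)$. By \cref{prop:NU_subsets}, $HU_n(A)\subseteq HU_t(A)$, so $Z(A)\subseteq HU_t(A)$, and what remains for (i) is the reverse inclusion $HU_t(A)\subseteq Z(A)$. Part (ii) will follow by applying (i) to the opposite algebra $A^{\mathrm{op}}$. Passing to $A^{\mathrm{op}}$ swaps left and right annihilators and left and right regularity, and it preserves associativity, the center and the notion of hom-unity (with left and right exchanged). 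The parenthetical claims are immediate: if $r$ is right-regular and $a\in Ann_A^l(A)$, then $a\cdot r=0$ forces $a=0$, and symmetrically for (ii).

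For the reverse inclusion, take $a$ with $(A,L_a)$ hom-associative. As computed in the preceding example, this means $(a\cdot x\cdot y-x\cdot y\cdot a)\cdot z=0$ for all $x,y,z$. Hence $[a,x\cdot y]_{-}\in Ann_A^l(A)=\{0\}$, that is, $a\in Z_A(A\cdot A)$. The key step is to upgrade this to full centrality using $Ann_A^l(A)=0$ once more. For $x,z,w\in A$, associativity and $[a,z\cdot w]_{-}=0$ give
\begin{equation*}
[a,x]_{-}\cdot(z\cdot w)=a\cdot x\cdot z\cdot w-x\cdot a\cdot(z\cdot w)=(x\cdot z\cdot w)\cdot a-x\cdot(z\cdot w)\cdot a=0 .
\end{equation*}
Here we used $a\cdot(x\cdot z\cdot w)=(x\cdot z\cdot w)\cdot a$, which holds because $x\cdot z\cdot w=(x\cdot z)\cdot w\in A\cdot A$. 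It follows that $([a,x]_{-}\cdot z)\cdot w=0$ for all $w$, so $[a,x]_{-}\cdot z\in Ann_A^l(A)=0$ for all $z$. Therefore $[a,x]_{-}\in Ann_A^l(A)=0$, and $a\in Z(A)$.

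The main obstacle is the branch of the definition of $HU_t(A)$ in which only $(A,R_a)$ is hom-associative. There the condition is $x\cdot[a,y\cdot z]_{-}=0$, which places $[a,y\cdot z]_{-}$ in $Ann_A^r(A)$ rather than $Ann_A^l(A)$, so the hypothesis of (i) does not apply directly. I would first rely on the identification $HU_t(A)=HU_t^l(A)\cap HU_t^r(A)$ stated just before the proposition for associative algebras. Under it, every $a\in HU_t(A)$ lies in $HU_t^l(A)$, and the argument above applies verbatim. If that identification is not taken for granted, I would try to show that $c=[a,y\cdot z]_{-}$, which satisfies $A\cdot c=0$, also satisfies $c\cdot A\cdot A=0$. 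The idea is to exploit that $A\cdot c=0$ turns the products $c\cdot w\cdot u$ into expressions involving $a$ sandwiched between elements of $A\cdot A$. Should this fail, I would record (i) as concerning the left hom-unities $HU_t^l(A)$ and (ii) as concerning $HU_t^r(A)$, each obtained by the argument above or its mirror image.
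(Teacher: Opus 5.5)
Your proof is correct, provided $HU_t(A)$ is read as $HU_t^l(A)\cap HU_t^r(A)$, which is how the paper itself writes it in the example immediately preceding the proposition. The paper gives no separate proof. It only records there that $a\in HU_t^l(A)$ if and only if $[a,A\cdot A]_{-}\subseteq Ann_A^l(A)$ (and dually for $HU_t^r(A)$). Under $Ann_A^l(A)=0$ this yields only $a\in Z_A(A\cdot A)$, and the paper never explains why this set coincides with $Z(A)$; it lists $HU_n^l(A)=Z_A(A\cdot A)$ and $HU_n(A)=Z(A)$ separately. Your computation $[a,x]_{-}\cdot(z\cdot w)=0$, followed by two applications of $Ann_A^l(A)=0$, is exactly the missing step. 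It shows that $Z_A(A\cdot A)=Z(A)$ for every associative $A$ with $Ann_A^l(A)=0$, and mirrored, with $Ann_A^r(A)=0$. Together with $Z(A)\subseteq HU_t^r(A)$, this gives (i). Your reduction of (ii) to (i) through $A^{\mathrm{op}}$ is sound, since $HU_t^l(A^{\mathrm{op}})=HU_t^r(A)$.

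Your hedge should be resolved in only one direction. The fallback of showing $c\cdot A\cdot A=0$ for $c=[a,y\cdot z]_{-}$ cannot work, because with the literal definition of $HU_t(A)$ at the start of \cref{sec:non-unital} (``$L_a$ or $R_a$''), statement (i) is false. Take $A=\left\{\begin{pmatrix} p&0\\ q&0\end{pmatrix}\right\}$, that is, $(p,q)\cdot(r,s)=r\,(p,q)$, and write $x=(x_1,x_2)$ and similarly for other elements. This algebra is associative with right unity $(1,0)$, so $Ann_A^l(A)=0$, and a direct check gives $Z(A)=\{0\}$. Yet for every $a$ one has $(x\cdot a)\cdot(y\cdot z)=a_1y_1z_1\,x=(x\cdot y)\cdot(z\cdot a)$. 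So every $R_a$ is HA-compatible, and the union $HU_t^l(A)\cup HU_t^r(A)$ is all of $A$. Hence the intersection reading is not merely convenient but necessary for the proposition to hold. Equivalently, your last fallback of restricting (i) to $HU_t^l(A)$ and (ii) to $HU_t^r(A)$ is the correct one.
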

			
		\end{example}

		\subsection{Unitalizations}\label{ssec:unitalizations}
		
		Given an algebra $A$ over a field $\mathbb{F}$, the following \textit{semi-direct product} construction over $\tilde{A}=A\oplus \mathbb{F}$ yields a two-sided unital algebra with unity $(0,1)^T$:
		\begin{align*}
			&\begin{pmatrix}
				a_1 \\ \lambda_1
			\end{pmatrix} + \begin{pmatrix}
				a_2 \\ \lambda_2
			\end{pmatrix}=\begin{pmatrix}
				a_1+a_2 \\ \lambda_1+\lambda_2
			\end{pmatrix},\\
			&\begin{pmatrix}
				a_1 \\ \lambda_1
			\end{pmatrix}\cdot\begin{pmatrix}
				a_2 \\ \lambda_2
			\end{pmatrix}=\begin{pmatrix}
				a_1a_2+a_2\lambda_1+a_1\lambda_2 \\ \lambda_1\lambda_2
			\end{pmatrix}.
		\end{align*}
		\begin{remark}
			In this section, the product in $A$ is denoted by juxtaposition. The dot represents the product in $\tilde{A}$.
		\end{remark}
		We can discuss the hom-associative structures of this construction by computing the subalgebra $AC(\tilde{A})=Z(\tilde{A})\cap N(\tilde{A})\cap Ann_{\tilde{A}}^l([\tilde{A},\tilde{A},\tilde{A}]_{\mathrm{as}})$. We compute all three subspaces as follows:
		\begin{multline*}
			\begin{pmatrix}
				b \\ \mu
			\end{pmatrix}\in Z(\tilde{A})\Leftrightarrow\forall \begin{pmatrix}
				a \\ \lambda
			\end{pmatrix}\in \tilde{A},\ \begin{pmatrix}
				ba+a\mu+b\lambda \\ \mu\lambda
			\end{pmatrix}=\begin{pmatrix}
				ab+b\lambda+a\mu \\ \lambda\mu
			\end{pmatrix}\Leftrightarrow ab=ba \\
			\Leftrightarrow b\in Z(A).
		\end{multline*}
		Now, compute the associator of three elements of $\tilde{A}$:
		\begin{multline*}
			\left(\begin{pmatrix}
				a_1 \\ \lambda_1
			\end{pmatrix}\cdot \begin{pmatrix}
				a_2 \\ \lambda_2
			\end{pmatrix}\right)\cdot \begin{pmatrix}
				a_3 \\ \lambda_3
			\end{pmatrix}=\begin{pmatrix}
				a_1a_2+a_2\lambda_1+a_1\lambda_2 \\ \lambda_1\lambda_2
			\end{pmatrix}\cdot \begin{pmatrix}
				a_3 \\ \lambda_3
			\end{pmatrix}\\
			=\begin{pmatrix}a_1(a_1a_2+a_2\lambda_1+a_1\lambda_2)a_3+a_3\lambda_1\lambda_2+(a_1a_2+a_2\lambda_1+a_1\lambda_2)\lambda_3 \\ \lambda_1\lambda_2\lambda_3
			\end{pmatrix}\\
			=\begin{pmatrix}
				(a_1a_2)a_3+\lambda_3a_1a_2+\lambda_2a_1a_3+\lambda_1a_2a_3+\lambda_2\lambda_3a_1+\lambda_1\lambda_3a_2+\lambda_1\lambda_2a_3 \\ \lambda_1\lambda_2\lambda_3
			\end{pmatrix}
		\end{multline*}
		\begin{multline*}
			\begin{pmatrix}
				a_1 \\ \lambda_1
			\end{pmatrix}\cdot\left(\begin{pmatrix}
				a_2 \\ \lambda_2
			\end{pmatrix}\cdot \begin{pmatrix}
				a_3 \\ \lambda_3
			\end{pmatrix}\right)=\begin{pmatrix}
				a_1 \\ \lambda_1
			\end{pmatrix}\cdot \begin{pmatrix}
				a_2a_3+a_3\lambda_2+a_2\lambda_3 \\ \lambda_2\lambda_3
			\end{pmatrix}\\
			=\begin{pmatrix}    a_1(a_2a_3+a_3\lambda_2+a_2\lambda_3)+(a_2a_3+a_3\lambda_2+a_2\lambda_3)\lambda_1+a_1\lambda_2\lambda_3 \\ \lambda_1\lambda_2\lambda_3
			\end{pmatrix}\\
			=\begin{pmatrix}
				a_1(a_2a_3)+\lambda_3a_1a_2+\lambda_2a_1a_3+\lambda_1a_2a_3+\lambda_2\lambda_3a_1+\lambda_1\lambda_3a_2+\lambda_1\lambda_2a_3 \\ \lambda_1\lambda_2\lambda_3
			\end{pmatrix},
		\end{multline*}
		Observe that when taking the difference, all terms vanish except $(a_1a_2)a_3$ and $a_1(a_2a_3)$.
		\begin{align*}
			\left[\begin{pmatrix}
				a_1 \\ \lambda_1
			\end{pmatrix},\begin{pmatrix}
				a_2 \\ \lambda_2
			\end{pmatrix}, \begin{pmatrix}
				a_3 \\ \lambda_3
			\end{pmatrix}\right]_{\mathrm{as}}=\begin{pmatrix}
				(a_1a_2)a_3 - a_1(a_2a_3) \\ 0
			\end{pmatrix}=\begin{pmatrix}
				[a_1,a_2,a_3]_{\mathrm{as}} \\ 0
			\end{pmatrix},
		\end{align*}
		where this quantity is zero if and only if $[a_1,a_2,a_3]_{\mathrm{as}}=0$. Thus
		\begin{equation*}
			\begin{pmatrix}
				b \\ \mu
			\end{pmatrix}\in N(\tilde{A}) \Leftrightarrow b\in N(A).
		\end{equation*}
		Finally, we search for the annihilator of all associators in $\tilde{A}$:
		\begin{multline*}
			\begin{pmatrix}
				b \\ \mu
			\end{pmatrix}\in Ann_A^l([\tilde{A},\tilde{A},\tilde{A}]_{\mathrm{as}})\Leftrightarrow \forall a_1,a_2,a_3\in A, \begin{pmatrix}
				0 \\ 0
			\end{pmatrix}=\begin{pmatrix}
				b \\ \mu
			\end{pmatrix}\cdot \begin{pmatrix}
				[a_1,a_2,a_3]_{\mathrm{as}} \\ 0
			\end{pmatrix}\\
			=\begin{pmatrix}
				b[a_1,a_2,a_3]_{\mathrm{as}}+\mu[a_1,a_2,a_3]_{\mathrm{as}} \\ 0
			\end{pmatrix}\Leftrightarrow b[a_1,a_2,a_3]_{\mathrm{as}}=-\mu[a_1,a_2,a_3]_{\mathrm{as}}\\
			\Leftrightarrow [A,A,A]_{\mathrm{as}}\subseteq Eig_{-\mu}(L_b).
		\end{multline*}
		Thus, we can describe all possible two-sided hom-unities as
		\begin{equation*}
			AC(\tilde{A})=\left\{\begin{pmatrix}
				b \\ \mu
			\end{pmatrix}\in\tilde{A}\mid b\in Z(A)\cap N(A),\ [A,A,A]_{\mathrm{as}}\subseteq Eig_{-\mu}(L_b)\right\}.
		\end{equation*}
		The map $\pi\colon AC(\tilde{A}) \rightarrow \mathbb{F}$ given by $(a,\lambda)^T \mapsto \lambda$ is a homomorphism, and moreover $\ker\pi=\begin{pmatrix}
			HU_n(A) \\ 0
		\end{pmatrix}\cong HU_n(A)$ is a two-sided ideal of $AC(\tilde{A})$.
		\begin{proposition}\label{HU_unitalization}
			Let $A$ be a non-unital algebra, $(\tilde{A},\cdot, (0,1)^T)$ its unitalization. The following statements hold:
			\begin{enumerate}[label=\textnormal{\arabic*)}]
				\item $Z(\tilde{A})\cong Z(A)\oplus \mathbb{F}$.
				\item $N(\tilde{A})\cong N(A)\oplus\mathbb{F}$.
				\item $AC(\tilde{A})=\left\{\begin{pmatrix}
					b \\ \mu
				\end{pmatrix}\in\tilde{A}\mid b\in Z(A)\cap N(A),\ [A,A,A]_{\mathrm{as}}\subseteq Eig_{-\mu}(L_b)\right\}$ is a
				
				subalgebra of $(\tilde{A},\cdot, (0,1)^T)$. Moreover, $\begin{pmatrix}
					a \\0
				\end{pmatrix}\in AC(\tilde{A})$ if and only if
				
				$a\in HU_n(A)$.
			\end{enumerate}
		\end{proposition}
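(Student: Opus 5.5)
The plan is to reuse the componentwise computations carried out just before the statement and then close three short gaps: the linear-space isomorphisms, closure under products, and the description of $\ker\pi$. Throughout, write elements of $\tilde{A}$ as $(b,\mu)^T$ and use the product formula $(a_1,\lambda_1)^T\cdot(a_2,\lambda_2)^T=(a_1a_2+\lambda_1a_2+\lambda_2a_1,\lambda_1\lambda_2)^T$.

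For 1) and 2), the computation above shows that $(b,\mu)^T\in Z(\tilde{A})$ if and only if $b\in Z(A)$. The scalar $\mu$ is unconstrained. So $Z(\tilde{A})=Z(A)\times\mathbb{F}$ as a subspace of $\tilde{A}$, and the identification is $(b,\mu)^T\mapsto(b,\mu)$.

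For the nucleus, the associator formula $[\tilde a_1,\tilde a_2,\tilde a_3]_{\mathrm{as}}=([a_1,a_2,a_3]_{\mathrm{as}},0)^T$ settles the matter. It depends only on the $A$-components and puts no condition on the scalar parts. Hence each of the three conditions $[\tilde b,\tilde x,\tilde y]_{\mathrm{as}}=0$, $[\tilde x,\tilde b,\tilde y]_{\mathrm{as}}=0$ and $[\tilde x,\tilde y,\tilde b]_{\mathrm{as}}=0$ reduces to the corresponding condition on $b$ in $A$. This gives $N(\tilde{A})=N(A)\times\mathbb{F}$.

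If "$\cong$" is meant as algebras, I would also check the products. Because $(0,1)^T$ is a two-sided unity, the $\mathbb{F}$-summand is the copy $\mathbb{F}(0,1)^T$, and the product of two such elements is exactly the unitalization product restricted to $Z(A)\oplus\mathbb{F}$ (resp. $N(A)\oplus\mathbb{F}$). Both $Z(A)$ and $N(A)$ contain the relevant products only up to this semidirect structure, so I would state the isomorphism as one of linear spaces, with the product being that of the unitalization of $Z(A)$, resp. $N(A)$.

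For 3), the set equality follows by intersecting the three descriptions: the centre computation, the nucleus computation, and the annihilator computation ending in $[A,A,A]_{\mathrm{as}}\subseteq Eig_{-\mu}(L_b)$. That $AC(\tilde{A})$ is a subalgebra follows immediately from \cref{prop:NU_subsets} applied to the two-sided unital algebra $\tilde{A}$. There $AC(\tilde{A})=HU_n(\tilde{A})$, and $HU_n$ is shown to be a subalgebra for any algebra. Alternatively one can cite Fr\'egier--Gohr as in \cref{sec:hom-unities-2SU}.

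As a sanity check, and to be self-contained, I would verify closure directly. Take $(b,\mu)^T$ and $(b',\mu')^T$ in $AC(\tilde{A})$. Their product has first component $bb'+\mu b'+\mu' b$, which lies in $Z(A)\cap N(A)$ since that set is a subalgebra. On an associator $X$ we have
\begin{equation*}
(bb'+\mu b'+\mu'b)X=b(b'X)+\mu b'X+\mu'bX=\mu\mu'X-\mu\mu'X-\mu\mu'X=-\mu\mu'X,
\end{equation*}
using $b\in N(A)$ and $b'X=-\mu'X$. The result $-\mu\mu'X$ matches the scalar component $\mu\mu'$, as required.

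Finally, setting $\mu=0$, the condition becomes $b\in Z(A)\cap N(A)$ with $b[A,A,A]_{\mathrm{as}}=0$, which is exactly $b\in HU_n(A)$. This gives the last claim.

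The main obstacle is only the interpretation of "$\cong$" in 1)--2). As algebras, the product on $Z(A)\oplus\mathbb{F}$ must be read as the unitalization product; I would make that explicit rather than claim a direct-product isomorphism.
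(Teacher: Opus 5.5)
Your proposal is correct and takes the paper's route. You read everything off the componentwise computations of $Z(\tilde A)$, $N(\tilde A)$ and $Ann^l_{\tilde A}([\tilde A,\tilde A,\tilde A]_{\mathrm{as}})$, combined (implicitly, in your case) with Theorem~\ref{thm:AC_two_sided_unital} for the two-sided unital $\tilde A$; the subalgebra property is inherited from the two-sided unital theory, so citing \cref{prop:NU_subsets} is fine.

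Your direct closure check, giving $(bb'+\mu b'+\mu'b)X=-\mu\mu'X$, is correct and is a useful self-contained supplement to what the paper leaves implicit. Reading ``$\cong$'' in 1)--2) as an isomorphism of linear spaces is the intended meaning; in fact $Z(A)$ need not even be closed under products in a non-associative algebra.
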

		\begin{corollary}
			If $(\tilde{A},\cdot, (0,1)^T)$ is associative, then
			\begin{equation*}
				AC(\tilde{A})=Z(\tilde{A})=\left\{\begin{pmatrix}
					b \\ \mu
				\end{pmatrix}\in\tilde{A}\mid b\in Z(A)\right\}\cong Z(A)\oplus\mathbb{F}.
			\end{equation*}
		\end{corollary}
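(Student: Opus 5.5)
The corollary follows by substituting associativity into the formula for $AC(\tilde{A})$ in \cref{HU_unitalization}. The plan is to simplify each of the three defining conditions under the hypothesis that $\tilde{A}$ is associative.

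First, the associator of $\tilde{A}$ was computed above as
\[
[(a_1,\lambda_1)^T,(a_2,\lambda_2)^T,(a_3,\lambda_3)^T]_{\mathrm{as}}=([a_1,a_2,a_3]_{\mathrm{as}},0)^T .
\]
Hence $\tilde{A}$ is associative if and only if $A$ is associative. In that case $N(A)=A$ and $[A,A,A]_{\mathrm{as}}=\{0\}$. Consequently the condition $b\in Z(A)\cap N(A)$ reduces to $b\in Z(A)$. The eigenspace condition $[A,A,A]_{\mathrm{as}}\subseteq Eig_{-\mu}(L_b)$ becomes $\{0\}\subseteq Eig_{-\mu}(L_b)$, which holds for every $\mu\in\mathbb{F}$. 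The only subtlety is that $Eig_{-\mu}(L_b)$ is understood here as the subspace $\ker(L_b+\mu\,\mathrm{id})$, which always contains $0$ even when $-\mu$ is not an actual eigenvalue. Substituting into the description of $AC(\tilde{A})$ in \cref{HU_unitalization} gives
\[
AC(\tilde{A})=\{(b,\mu)^T\mid b\in Z(A),\ \mu\in\mathbb{F}\}.
\]

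Next, the characterization of the center computed before \cref{HU_unitalization} states that $(b,\mu)^T\in Z(\tilde{A})$ if and only if $b\in Z(A)$, with $\mu$ arbitrary. So $Z(\tilde{A})$ is the same set, and $AC(\tilde{A})=Z(\tilde{A})$.

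As a consistency check, the equality $AC(\tilde{A})=Z(\tilde{A})$ also follows directly from Example~\ref{ex:two_sided_unital_associative}, since $\tilde{A}$ is two-sided unital and associative.

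Finally, the map $(b,\mu)^T\mapsto(b,\mu)$ is a linear bijection onto $Z(A)\oplus\mathbb{F}$, which gives the stated isomorphism; this is item 1) of \cref{HU_unitalization}. No step is a genuine obstacle; the only points needing care are the reading of the eigenspace condition noted above and the direction ``$\tilde{A}$ associative $\Rightarrow$ $A$ associative''. The latter holds because $A$ embeds into $\tilde{A}$ as $(a,0)^T$.
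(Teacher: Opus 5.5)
Your proposal is correct and follows essentially the same route as the paper, which obtains this corollary directly from \cref{HU_unitalization}: associativity of $\tilde{A}$ forces $N(A)=A$ and $[A,A,A]_{\mathrm{as}}=\{0\}$, so the eigenspace condition is vacuous and $AC(\tilde{A})$ coincides with the computed center $\{(b,\mu)^T\mid b\in Z(A)\}$. Your reading of $\cong$ as a linear isomorphism and your cross-check via Example~\ref{ex:two_sided_unital_associative} are both consistent with the paper.
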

		This is natural because the unitalization process preserves the center and nucleus.
		The fact that there is a copy of $HU_n(A)$ hidden inside $AC(\tilde{A})$ as a two-sided ideal indicates that we can study certain properties using the structure of $\tilde{A}$.
		\begin{example}
			If $AC(\tilde{A})$ is simple, then either $HU_n(A)= \{0\}$ (no two-sided hom-unities of $A$ are \textnormal{good enough} elements) or $HU_n(A)\cong AC(\tilde{A})$ (hom-unities of $\tilde{A}$ uniquely determine a subalgebra of two-sided hom-unities of $A$).
		\end{example}
		
		Let $\begin{pmatrix}
			a \\ \lambda
		\end{pmatrix}, \begin{pmatrix}
			b \\ \mu
		\end{pmatrix}\in AC(\tilde{A})$. For any $X\in[A,A,A]_{\mathrm{as}}$, we have
		\begin{equation*}
			\left.\begin{array}{c}
				0=bX+\mu X= a(bX)+a\mu X=(ab)X+\mu aX  \\
				0=aX+\lambda X= b(aX)+b\lambda X=(ab)X+\lambda bX
			\end{array}\right\}\Rightarrow
			\begin{array}{l}
				\lambda bX=\mu aX,\\
				(\lambda b - \mu a)X=0.
			\end{array}
		\end{equation*}
		\begin{proposition}
			Let $A$ be non-associative, $(\tilde{A},\cdot,(0,1)^T)$ its unitalization. For all $\begin{pmatrix}
				a \\ \lambda
			\end{pmatrix},\begin{pmatrix}
				b \\ \mu
			\end{pmatrix}\in AC(\tilde{A})$ the following statements hold:
			\begin{enumerate}[label=\textnormal{\arabic*)}]
				\item $(\lambda b-\mu a)[A,A,A]_{\mathrm{as}}=\{0\}$. In particular, $\lambda b-\mu a\in HU_n(A)$.
				\item If $\lambda=\mu$, then $b-a\in HU_n(A)$.
				\item If $a=b$, then either $a=0$ or $\lambda=\mu$.
			\end{enumerate}
		\end{proposition}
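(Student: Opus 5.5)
The plan is to build directly on the computation displayed just before the statement. For $(a,\lambda)^T,(b,\mu)^T\in AC(\tilde{A})$ we know from \cref{HU_unitalization} that $a,b\in Z(A)\cap N(A)$, together with the eigenvalue conditions $aX=-\lambda X$ and $bX=-\mu X$ for every $X\in[A,A,A]_{\mathrm{as}}$.

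\textbf{Statement 1).} I first rederive $(\lambda b-\mu a)X=0$ carefully. Multiply $bX=-\mu X$ on the left by $a$. Since $a\in N(A)$, we have $a(bX)=(ab)X$, so $(ab)X=-\mu\, aX=\lambda\mu X$. Symmetrically, using $b\in N(A)$ and $ab=ba$ (centrality), $(ab)X=(ba)X=b(aX)=-\lambda\, bX$. Comparing the two expressions gives $\lambda bX=\mu aX$, that is, $(\lambda b-\mu a)X=0$, so $\lambda b-\mu a\in Ann_A^l([A,A,A]_{\mathrm{as}})$. Since $Z(A)\cap N(A)$ is a linear subspace containing $a$ and $b$, it also contains $\lambda b-\mu a$. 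Hence $\lambda b-\mu a\in Z(A)\cap N(A)\cap Ann_A^l([A,A,A]_{\mathrm{as}})=HU_n(A)$. (Alternatively, one could note that $\lambda(b,\mu)^T-\mu(a,\lambda)^T=(\lambda b-\mu a,0)^T\in AC(\tilde{A})$, which is a subspace, and apply the last part of \cref{HU_unitalization}. This gives the same conclusion more quickly.)

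\textbf{Statement 2).} This comes from the subspace argument rather than from 1). If $\lambda=\mu$, then $(b,\mu)^T-(a,\lambda)^T=(b-a,0)^T\in AC(\tilde{A})$, so $b-a\in HU_n(A)$ by \cref{HU_unitalization}. Deriving it from 1) instead would only give $\lambda(b-a)\in HU_n(A)$, which is inconclusive when $\lambda=0$.

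\textbf{Statement 3).} This is where non-associativity enters, and I expect it to be the only delicate point. If $a=b$, the eigenvalue conditions read $aX=-\lambda X$ and $aX=-\mu X$, so $(\lambda-\mu)X=0$ for all $X\in[A,A,A]_{\mathrm{as}}$. Since $A$ is non-associative, there is some associator $X\neq0$, and therefore $\lambda=\mu$. Note that this gives $\lambda=\mu$ unconditionally, which in particular yields the stated dichotomy. The formula $(\lambda-\mu)a\in HU_n(A)$ from 1) would only give the weaker disjunction, so I would use the direct eigenvalue argument.
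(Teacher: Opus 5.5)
Your proof is correct. It agrees with the paper on 1) and takes a more direct route on 2) and 3).

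For 1), your main derivation is the paper's own computation: multiply $bX=-\mu X$ by $a$ and $aX=-\lambda X$ by $b$, using $a,b\in N(A)$ and $ab=ba$. Your parenthetical alternative is actually more economical. Linearity alone gives $\lambda bX-\mu aX=-\lambda\mu X+\mu\lambda X=0$, and $\lambda b-\mu a\in Z(A)\cap N(A)$ holds because that set is a subspace. So the nucleus and centrality play no role in the annihilation itself.

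For 2), the paper gives no separate argument and implicitly reads it as the case $\lambda=\mu$ of 1). As you observe, that reading leaves $\lambda=\mu=0$ to be handled apart; there $a,b\in HU_n(A)$ already by the last part of the preceding proposition. Your subspace argument $(b,\mu)^T-(a,\lambda)^T=(b-a,0)^T\in AC(\tilde{A})$ covers every case at once.

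For 3), the paper splits into two cases.
\begin{itemize}
\item If $a\in HU_n(A)$, non-associativity forces $\lambda=\mu=0$.
\item If $a\notin HU_n(A)$, then $a[A,A,A]_{\mathrm{as}}\neq\{0\}$, since $a\in Z(A)\cap N(A)$ already. Then $(\lambda-\mu)a[A,A,A]_{\mathrm{as}}=\{0\}$ from 1) forces $\lambda=\mu$.
\end{itemize}
So the disjunction obtainable from 1) is really ``$\lambda=\mu$ or $a\in HU_n(A)$'', and the paper closes it with the same non-associativity input you use.

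Your direct comparison of the two eigenvalue conditions on a single nonzero associator avoids this case split. It also makes explicit that $\lambda=\mu$ holds unconditionally, which both of the paper's cases yield as well. Hence the alternative ``$a=0$'' in the statement is never actually needed.
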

		\begin{proof}
			From $(\lambda b - \mu a)X=0$, we obtain $(\lambda b-\mu a)[A,A,A]_{\mathrm{as}}=\{0\}$. This can be read $(\lambda b-\mu a)\in Ann_A^l([A,A,A]_{\mathrm{as}})$, and thus $(\lambda b-\mu a)\in HU_n(A)$.
			
			If $a=b\in HU_n(A)$, then $\lambda=\mu=0$ by \cref{HU_unitalization}. If $a=b\notin HU_n(A)$, then $\{0\}=(\lambda a-\mu a)[A,A,A]_{\mathrm{as}}=(\lambda-\mu)a[A,A,A]_{\mathrm{as}}$. The conclusion follows from $a\notin HU_n(A)$.
		\end{proof}
		
		$AC(\tilde{A})$ being a subalgebra of $(\tilde{A},\cdot, (0,1)^T)$ indicates that it is closed by products. The product of any two elements $(a,\lambda)^T,(b,\mu)^T\in AC(\tilde{A})$ is then in $AC(\tilde{A})$, that is,
		\begin{align*}
			\begin{pmatrix}
				a \\ \lambda
			\end{pmatrix}\cdot \begin{pmatrix}
				b \\ \mu
			\end{pmatrix}=\begin{pmatrix}
				ab+\lambda b+\mu a \\ \lambda\mu
			\end{pmatrix}\in AC(\tilde{A})\Rightarrow \left\{\begin{array}{c}
				ab+\lambda b+\mu a\in Z(A)\cap N(A), \\
				\left[A,A,A\right]_{\mathrm{as}}\subseteq Eig_{-\lambda\mu}(L_{ab+\lambda b+\mu a}).
			\end{array} \right.
		\end{align*}

			 \begin{example}[Skew-symmetric algebras]\label{ex:skewsym}
           In any skew-sym\-metric algebra with left unity $1_1$ element, $a=[1_l,a]=[1_l,1_l]a=-[[1_l,1_l],a]=-[1_l,a]=-a\Rightarrow 2a=0$, and similarly, the same conclusion $a=-a, 2a=0 $ holds in any skew-sym\-metric algebra with right unity.
            Since $2a=0\Rightarrow a=0$ for all elements in any algebra over fields of characteristic different from $2$, the only such algebra is the trivial zero algebra $L=\{0\}$. For algebras over fields of characteristic $2$, this conclusion does not hold because $a=-a, 2a=0$ holds for all $a\neq 0$.
            This implies that a skew-symmetric algebra over a field of characteristic $2$ is  also symmetric because $x=-x \Rightarrow [x,y]=-[y,x]=[y,x]$, while it does not necessarily yield a zero product.
            Unlike algebras over fields of other characteristics, in characteristic $2$, a non-zero algebra can be both skew-symmetric and unital. For example, any field $F$ of characteristic $2$ (such as $\mathbb{F}_{2}$) is a nonzero, unital, skew-symmetric algebra over itself with multiplication that is commutative and satisfies $xy=yx=-yx$.
            \end{example}

            \begin{example}[Leibniz algebras]\label{ssec:Leibniz}
			Leibniz algebras $(\mathcal{L},[\cdot,\cdot])$ are those where every product operator is a derivation on the algebra. We distinguish between left and right Leibniz algebras according to which products are derivations.
			\begin{align*}
				\textbf{Left Leibniz: } [a,[b,c]]=[[a,b],c]+[b,[a,c]]\\
				\textbf{Right Leibniz: } [[a,b],c]=[[a,c],b]+[a,[b,c]]
			\end{align*}
            Leibniz algebras with non-trivial, that is, non-zero multiplication,
            cannot be unital, as any one-sided unity collapses the product:
                \begin{align*}
            		&\textbf{Left-unital: }\\
            		& [b,c]=[[1_l,b],c]=[[1_l,c],b]+[1_l,[b,c]]=[c,b]+[b,c] \Longrightarrow [c,b]=0,\\
            		&\textbf{Right-unital: }\\
            		& [a,b]=[[a,b],1_r]=[[a,1_r],b]+[a,[b,1_r]]=[a,b]+[a,b]
            		\Rightarrow [a,b]=0.
            	\end{align*}
            Note that for Leibniz algebras, this incompatibility of the existence of a non-zero unity with the product being non-zero holds independently of the characteristic of the field, while in skew-symmetric algebras, it is guaranteed for fields with characteristics different from $2$, while it often fails for fields with characteristic $2$. The incompatibility of the existence of a non-zero unity with the product being non-zero holds in Lie algebras over fields of any characteristic, as they are a subclass of Leibniz algebras.
            \end{example}

			Therefore, for Leibniz algebras, one must rely on the unitalization process: $HU_n(\mathcal{L})$ is a subspace of $AC(\mathcal{L})$ which we can compute.

            \begin{remark}
            In the literature, for example in {\rm \cite{Man:LowDimLeib}}, the \textit{centers} of a Leibniz algebra are given by $Z_l(\mathcal{L})=\{a\in \mathcal{L}\mid [a,\mathcal{L}]=\{0\}\}$, $Z_r(\mathcal{\mathcal{L}})=\{a\in \mathcal{L}\mid [\mathcal{L},a]=\{0\}\}$ and $Z(\mathcal{L})=Z_l(\mathcal{L})\cap Z_r(\mathcal{L})$.
            In context of our article, we emphasise that these sets are the \textnormal{annihilators} $Z_l(\mathcal{L})=Ann_\mathcal{L}^l(\mathcal{L})$,  $Z_r(\mathcal{\mathcal{L}})=Ann_\mathcal{L}^r(\mathcal{L})$ and $Z(\mathcal{L})=Ann_\mathcal{L}(\mathcal{L})$. In order to avoid notational overlap with the commutative center we use this for the classical centers of Leibniz algebras, while we denote the commutative center by $C(\mathcal{L})=\{a\in\mathcal{L}\mid [a,x]=[x,a]\ \forall x\in\mathcal{L}\}$.
            \end{remark}

            \begin{proposition}\label{prop:LeibHomUnities}
				Let $(\mathcal{L},[\cdot,\cdot])$ be a Leibniz algebra. Then $(\mathcal{L},[\cdot,\cdot],L_a)$ is hom-associative for all $a$ in the subspace
				\begin{equation*}
					HU_n(\mathcal{L})=C(\mathcal{L})\cap Ann_\mathcal{L}^l([\mathcal{L},\mathcal{L}])=C(\mathcal{L})\cap Ann_\mathcal{L}^r([\mathcal{L},\mathcal{L}])=C(\mathcal{L})\cap Ann_\mathcal{L}([\mathcal{L},\mathcal{L}]).
				\end{equation*}
				Moreover, every element in $HU_n(\mathcal{L})$ is 3-nilpotent.
			\end{proposition}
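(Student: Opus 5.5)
The plan is to work with a left Leibniz algebra; the right Leibniz case then follows by passing to the opposite algebra, exactly as in \cref{sec:hom-unities-1SU}. First I express every associator through the bracket. Rewriting the left Leibniz identity $[a,[b,c]]=[[a,b],c]+[b,[a,c]]$ gives
\begin{equation*}
[x,y,z]_{\mathrm{as}}=[[x,y],z]-[x,[y,z]]=-[y,[x,z]] \quad\text{for all } x,y,z\in\mathcal{L}.
\end{equation*}
Two consequences follow immediately. The first is $[\mathcal{L},\mathcal{L},\mathcal{L}]_{\mathrm{as}}\subseteq[\mathcal{L},\mathcal{L}]$. The second is that $a\in N^m(\mathcal{L})$ if and only if $[a,[x,z]]=0$ for all $x,z$, that is, $a\in Ann^l_\mathcal{L}([\mathcal{L},\mathcal{L}])$.

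The second step is the double inclusion $HU_n(\mathcal{L})=C(\mathcal{L})\cap Ann^l_\mathcal{L}([\mathcal{L},\mathcal{L}])$. Here $Z$ in the definition of $HU_n$ is the commutative center $C(\mathcal{L})$.

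\emph{Inclusion $\subseteq$.} This follows from $N(\mathcal{L})\subseteq N^m(\mathcal{L})$ together with the description of $N^m(\mathcal{L})$ above.

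\emph{Inclusion $\supseteq$.} Take $a\in C(\mathcal{L})$ with $[a,[\mathcal{L},\mathcal{L}]]=0$. Centrality gives $[[\mathcal{L},\mathcal{L}],a]=0$ as well, so $a\in Ann_\mathcal{L}([\mathcal{L},\mathcal{L}])$; this simultaneously yields the three displayed forms of $HU_n(\mathcal{L})$. It remains to check three things.
\begin{enumerate}[label=\textnormal{(\alph*)}]
\item $a\in N^l(\mathcal{L})$, which amounts to $[y,[a,z]]=0$.
\item $a\in N^r(\mathcal{L})$, which amounts to $[y,[x,a]]=0$.
\item $a\in Ann^l_\mathcal{L}([\mathcal{L},\mathcal{L},\mathcal{L}]_{\mathrm{as}})$.
\end{enumerate}
Condition (c) is immediate, since associators lie in $[\mathcal{L},\mathcal{L}]$, which $a$ annihilates. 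Condition (b) reduces to (a) because $[x,a]=[a,x]$.

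For (a) I use the standard left Leibniz fact that squares lie in the left annihilator: $[[x,x],y]=0$. This follows from the identity with $a=b=x$, which gives $[x,[x,c]]=[[x,x],c]+[x,[x,c]]$. Polarizing $x\mapsto a+x$ gives $[a,x]+[x,a]\in Ann^l_\mathcal{L}(\mathcal{L})$, and centrality turns this into $2[a,x]\in Ann^l_\mathcal{L}(\mathcal{L})$. Assuming characteristic $\neq 2$, we get $[[a,b],c]=0$. Feeding this into the Leibniz identity $0=[a,[b,c]]=[[a,b],c]+[b,[a,c]]$ yields $[b,[a,c]]=0$, which is (a).

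This polarization step is where I expect the main obstacle. It needs $\mathrm{char}\,\mathbb{F}\neq 2$, or else some separate argument, so I would state that assumption explicitly or look for a characteristic-free route through $[[a,a],c]=0$.

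The third step is hom-associativity. Since $C(\mathcal{L})\subseteq Z_\mathcal{L}(\mathcal{L}\cdot\mathcal{L})$, \cref{prop:NU_subsets} gives $HU_n(\mathcal{L})\subseteq HU_n^l(\mathcal{L})\subseteq HU^l_t(\mathcal{L})$. Hence $(\mathcal{L},[\cdot,\cdot],L_a)$ is hom-associative for every $a\in HU_n(\mathcal{L})$.

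Finally, 3-nilpotency is immediate. The element $[a,a]$ lies in $[\mathcal{L},\mathcal{L}]$, and $a$ annihilates $[\mathcal{L},\mathcal{L}]$ on both sides, so $[a,[a,a]]=[[a,a],a]=0$.
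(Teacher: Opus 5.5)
Your argument is correct once $\mathrm{char}\,\mathbb{F}\neq 2$ is assumed, and it takes a different route from the paper's.

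The paper starts from $a\in N(\mathcal{L})$. It writes out the associators with $a$ in each slot, separately for left and right Leibniz algebras, and shows that every triple product containing $a$ vanishes. This gives $HU_n(\mathcal{L})=C(\mathcal{L})\cap N(\mathcal{L})\subseteq C(\mathcal{L})\cap Ann_{\mathcal{L}}([\mathcal{L},\mathcal{L}])$ and 3-nilpotency of every element of $N(\mathcal{L})$. Hom-associativity is left to the general inclusion $HU_n\subseteq HU_t$ of \cref{prop:NU_subsets}. The reverse inclusion $C(\mathcal{L})\cap Ann^l_{\mathcal{L}}([\mathcal{L},\mathcal{L}])\subseteq N(\mathcal{L})$ is never argued there.

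Your identification $N^m(\mathcal{L})=Ann^l_{\mathcal{L}}([\mathcal{L},\mathcal{L}])$ gives the forward inclusion in one line. Your polarization step then supplies exactly the reverse inclusion that the paper omits. In the reduction to the left case, say explicitly that passing to $\mathcal{L}^{\mathrm{op}}$ turns $L_a$ into $R_a$, which equals $L_a$ because $a$ is central.

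The characteristic hypothesis you flag is genuinely necessary, so there is no characteristic-free route to the set equality. Over a field of characteristic $2$, let $\mathcal{L}$ have basis $a,z,u,v$, with the only nonzero brackets $[a,z]=[z,a]=u$ and $[z,u]=[u,z]=v$.
\begin{itemize}
\item Checking basis triples, the left Leibniz identity holds. The only instance that uses characteristic $2$ is $(a,z,z)$, which reads $0=v+v$. Since the bracket is commutative, the right Leibniz identity holds as well.
\item Here $a\in C(\mathcal{L})$ and $[a,u]=[a,v]=0$, so $a\in C(\mathcal{L})\cap Ann_{\mathcal{L}}([\mathcal{L},\mathcal{L}])$.
\item However, $[a,z,z]_{\mathrm{as}}=[u,z]=v\neq 0$, so $a\notin N(\mathcal{L})$, and hence $a\notin HU_n(\mathcal{L})$.
\end{itemize}

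What survives in every characteristic are the two substantive conclusions, for all $a\in C(\mathcal{L})\cap Ann_{\mathcal{L}}([\mathcal{L},\mathcal{L}])$. Your 3-nilpotency argument never uses the characteristic. Hom-associativity of $L_a$ can be checked directly, for a left Leibniz algebra, with the right case following from your opposite-algebra reduction.

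Here $L_{[x,y]}=L_xL_y-L_yL_x$ and $L_aL_x=0$, so $L_{[a,x]}=L_{[x,a]}=L_xL_a$. Hence $[[a,x],[y,w]]=L_xL_aL_y(w)=0$, and
\begin{equation*}
[[x,y],[a,w]]=L_xL_{[y,a]}(w)-L_yL_{[x,a]}(w)=L_{[x,[y,a]]}(w)-L_{[y,[x,a]]}(w)=0.
\end{equation*}
The middle equality uses $L_xL_t=L_{[x,t]}+L_tL_x$ with $t=[y,a]$, together with $L_{[y,a]}L_x=L_yL_aL_x=0$. The last equality uses $[x,[y,a]]=[[x,y],a]+[y,[x,a]]=[y,[x,a]]$, since $[[x,y],a]=[a,[x,y]]=0$.
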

			
			\begin{proof}
				Compute the associator of three elements $a,b,c\in\mathcal{L}$ using the Leibniz rule:
				\begin{align*}
					\textbf{Left: }[a,[b,c]]=[[a,b],c]+[b,[a,c]]\Rightarrow [a,b,c]_{\mathrm{as}}=-[b,[a,c]].\\
					\textbf{Right: }[[a,b],c]=[[a,c],b]+[a,[b,c]]\Rightarrow [a,b,c]_{\mathrm{as}}=[[a,c],b].
				\end{align*}
				It follows that $[\mathcal{L},\mathcal{L},\mathcal{L}]_{\mathrm{as}}=[[\mathcal{L},\mathcal{L}],\mathcal{L}]$ (left) and $[\mathcal{L},\mathcal{L},\mathcal{L}]_{\mathrm{as}}=[\mathcal{L},[\mathcal{L},\mathcal{L}]]$ (right).
				
				Any $a\in N(\mathcal{L})$ satisfies, for all $b,c\in\mathcal{L}$, the following relations if $\mathcal{L}$ is left Leibniz:
				\begin{align*}
					\left.\begin{array}{c}
						0=[c,b,a]_{\mathrm{as}}=[[c,b],a]-[c,[b,a]]=-[b,[c,a]]  \\
						0=[b,c,a]_{\mathrm{as}}=[[b,c],a]-[b,[c,a]]=-[c,[b,a]]
					\end{array}\right\}&\Rightarrow\ \ [[b,c],a]=[c,[b,a]]=0\\
					\left.\begin{array}{c}
						0=[a,b,c]_{\mathrm{as}}=[[a,b],c]-[a,[b,c]]=-[b,[a,c]]  \\
						0=[b,a,c]_{\mathrm{as}}=[[b,a],c]-[b,[a,c]]=-[c,[a,b]]  \\
					\end{array}\right\}&\Rightarrow \left\{\begin{matrix}
						[[a,b],c]=[[b,a],c]=0\\
						[a,[b,c]]=[b,[a,c]]=0
					\end{matrix} \right.
				\end{align*}
				and the following relations if $\mathcal{L}$ is right-Leibniz:
				\begin{align*}
					\left.\begin{array}{c}
						0=[a,b,c]_{\mathrm{as}}=[[a,b],c]-[a,[b,c]]=[[a,c],b]  \\
						0=[a,c,b]_{\mathrm{as}}=[[a,c],b]-[a,[c,b]]=[[a,b],c]
					\end{array}\right\}&\Rightarrow\ \ [a,[b,c]]=[[a,c],b]=0\\
					\left.\begin{array}{c}
						0=[c,b,a]_{\mathrm{as}}=[[c,b],a]-[c,[b,a]]=[[c,a],b]  \\
						0=[c,a,b]_{\mathrm{as}}=[[c,a],b]-[c,[a,b]]=[[c,b],a]
					\end{array}\right\}&\Rightarrow \left\{\begin{matrix}
						[c,[b,a]]=[c,[a,b]]=0\\
						[[c,b],a]=[[c,a],b]=0
					\end{matrix} \right.
				\end{align*}
				Every triple product involving $a$ is zero. In particular, $a$ commutes with any $[b,c]$ and is 3-nilpotent. That is, $N(\mathcal{L})$, and in particular $C(\mathcal{L})\cap N(\mathcal{L})$, is a subspace of the subspace of 3-nilpotent elements of $\mathcal{L}$. By the Lebniz identity,
				$[\mathcal{L},\mathcal{L},\mathcal{L}]_{\mathrm{as}}=[[\mathcal{L},\mathcal{L}],\mathcal{L}]\subseteq [\mathcal{L},\mathcal{L}]$ and thus  $a\in N(\mathcal{L})\Rightarrow a\in Ann_\mathcal{L}^l([\mathcal{L},\mathcal{L}])\subseteq Ann_\mathcal{L}^l([[\mathcal{L},\mathcal{L}],\mathcal{L}])= Ann_\mathcal{L}^l([\mathcal{L},\mathcal{L},\mathcal{L}]_{\mathrm{as}})$. This indicates that $C(\mathcal{L})\cap N(\mathcal{L})$ cancels $[\mathcal{L},\mathcal{L}]$ on both sides by centrality, so we can use left, right, or two-sided annihilator notation indistinctly. This completes the proof.
			\end{proof}				

     Proposition \ref{prop:LeibHomUnities} also illustrates that the study of hom-associative structures is linked to the existence of $3$-nilpoitent elements in the algebra.
		
		\begin{example}[Hom-Leibniz algebras, non-unital]		
			The discussion of hom-associ\-ative structures in hom-Leibniz algebras, as seen in Example \ref{ssec:LeibnizHom} can continue in two directions:
			\begin{enumerate}
				\item Compute $AC_l(A)\cdot 1_l$ and examine its existence.
				\item Examine non-unital hom-Leibniz algebras.
			\end{enumerate}
			
			We start by examining non-unital hom-Leibniz algebras twisted by multiplication operators: if $(\mathcal{L},[\cdot,\cdot],\alpha)$ is non-unital, but $\alpha=L_a$ is a multiplication operator such that $a=[a,w]=\alpha(w)$ for some $w\in\mathcal{L}$ (for example, if $a$ is idempotent), then
			\begin{align*}
				[[w,x],\alpha(y)]&=[[w,y],\alpha(x)]+[\alpha(w),[x,y]]=[[w,y],\alpha(x)]+\alpha([x,y]),\\
				\alpha([x,y])&=[[w,x],\alpha(y)]-[[w,y],\alpha(x)].
			\end{align*}
			Observe that then
			\begin{multline*}
				\alpha([y,x])=[[w,y],\alpha(x)]-[[w,x],\alpha(y)]\\
				=-\big([[w,x],\alpha(y)]-[[w,y],\alpha(x)]\big)=-\alpha([x,y]).
			\end{multline*}
			The product $\alpha\circ[\cdot,\cdot]$ is skew-symmetric, and in particular, $\alpha([x,x])=0$. We then have that $(\mathcal{L},L_a\circ [\cdot,\cdot],L_a\circ L_a)$ is a hom-Lie algebra if $L_a$ is multiplicative. This hom-Lie algebra is multiplicative because it is a Yau twist.
			
			\begin{proposition}\label{prop:HLeibYauTwist}
				Let $a\in\mathcal{L}$ be such that $a=[a,w]$ for some $w\in\mathcal{L}$. The Yau twist of a multiplicative right hom-Leibniz algebra $(\mathcal{L},[\cdot,\cdot],L_a)$ by $L_a$, that is, the algebra $(\mathcal{L},L_a\circ [\cdot,\cdot],L_a\circ L_a)$, is a multiplicative hom-Lie algebra.
				
				Let $a\in\mathcal{L}$ be such that $a=[w,a]$ for some $w\in\mathcal{L}$. The Yau twist of a multiplicative left hom-Leibniz algebra $(\mathcal{L},[\cdot,\cdot],R_a)$ by $R_a$, that is, the algebra $(\mathcal{L},R_a\circ[\cdot,\cdot],R_a\circ R_a)$, is a multiplicative hom-Lie algebra.
			\end{proposition}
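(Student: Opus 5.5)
The plan is to build on the computation made just before the statement, and I treat the right hom-Leibniz case in detail. The left case then follows by the mirror argument, or by passing to the opposite algebra: the opposite of a left hom-Leibniz algebra twisted by $R_a$ is a right hom-Leibniz algebra twisted by $L_a$.

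Let $\alpha=L_a$ with $a=[a,w]$. Then $\alpha(w)=[a,w]=a$. Substituting $x\mapsto w$ in the right hom-Leibniz identity
\[
[[x,y],\alpha(z)]=[[x,z],\alpha(y)]+[\alpha(x),[y,z]]
\]
gives
\[
\alpha([x,y])=[[w,x],\alpha(y)]-[[w,y],\alpha(x)].
\]
The right-hand side is visibly antisymmetric in $(x,y)$. Hence the twisted product $\{x,y\}:=\alpha([x,y])$ satisfies $\{x,y\}=-\{y,x\}$.

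Next I verify the hom-Jacobi identity for the Yau twist $(\mathcal{L},\{\cdot,\cdot\},\alpha^2)$. By multiplicativity of $\alpha$,
\[
\{\{x,y\},\alpha^2(z)\}=\alpha([\alpha([x,y]),\alpha^2(z)])=\alpha^2([[x,y],\alpha(z)]).
\]
Similarly, $\{\alpha^2(x),\{y,z\}\}=\alpha^2([\alpha(x),[y,z]])$. Thus $\alpha^2$ applied to the right hom-Leibniz identity yields the right hom-Leibniz identity for the twisted bracket with twisting map $\alpha^2$. This is the standard Yau twist fact: twisting a multiplicative hom-Leibniz algebra by a morphism gives a multiplicative hom-Leibniz algebra. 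A skew-symmetric right hom-Leibniz algebra is a hom-Lie algebra, because with skew-symmetry the hom-Leibniz identity rearranges into the cyclic hom-Jacobi identity
\[
\circlearrowleft_{x,y,z}\{\alpha^2(x),\{y,z\}\}=0.
\]
To see this, rewrite $\{\{x,y\},\alpha^2 z\}=-\{\alpha^2 z,\{x,y\}\}$ and $\{\{x,z\},\alpha^2 y\}=\{\alpha^2 y,\{z,x\}\}$.

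It remains to check multiplicativity. Since $\alpha$ commutes with $\alpha^2$ and is multiplicative for $[\cdot,\cdot]$, we get
\[
\alpha^2(\{x,y\})=\alpha(\alpha^2[x,y])=\alpha([\alpha^2x,\alpha^2y])=\{\alpha^2x,\alpha^2y\}.
\]

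The main subtlety is making sure skew-symmetry in the strong sense $\{x,x\}=0$ is available where the hom-Lie definition requires it. The displayed formula gives $\{x,x\}=[[w,x],\alpha x]-[[w,x],\alpha x]=0$ directly, so this holds in every characteristic. The other point needing care is that the antisymmetry formula uses only the hom-Leibniz identity and $\alpha(w)=a$. It does not use hom-associativity, so no extra hypothesis is hidden there.
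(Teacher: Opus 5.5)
Your proof is correct and follows the paper's argument. Like the paper, you substitute $w$ into the right hom-Leibniz identity to obtain $\alpha([x,y])=[[w,x],\alpha(y)]-[[w,y],\alpha(x)]$, deduce that $L_a\circ[\cdot,\cdot]$ is skew-symmetric, and rely on the Yau twist for the hom-Jacobi identity and multiplicativity; you additionally spell out the steps the paper leaves implicit (applying $\alpha^2$ to the hom-Leibniz identity, rewriting skew-symmetric hom-Leibniz as hom-Jacobi, and handling the left case via the opposite algebra).
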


			Consider a left-unital, right hom-Leibniz algebra $(\mathcal{L},[\cdot,\cdot],\alpha,1_l)$, and let $\beta=L_b$ be HA-compatible, that is, $(\mathcal{L},[\cdot,\cdot],\beta,1_l)$ is hom-associative.
			\begin{align*}
				\textbf{Right hom-Leibniz: }& [x,y,z]_{\mathrm{as}}^\alpha=[[x,z],\alpha(y)], \\
				\textbf{hom-associative: }& [x,y,z]_{\mathrm{as}}^\beta=0.
			\end{align*}
			This indicates that there exist $x,y,z\in\mathcal{L}$ such that $[\beta(x),[z,y]]=[[x,z],\beta(y)]\neq 0$. Otherwise, $(\mathcal{L},[\cdot,\cdot],\beta,1_l)$ would be a right hom-Leibniz algebra, and thus $b=0$ as previously established. Additionally, $\alpha$ can be a multiplication operator, as long as, if it is left,  the factor is not in $AC_l(A)$. Otherwise, $(\mathcal{L},[\cdot,\cdot],\alpha,1_l)$ would also be hom-associative, and thus $\alpha=0$ as we have established.
			
		\end{example}
		
		\begin{example}[Commutative algebras, non-unital]
			Let $A$ be a commutative algebra. If we look for hom-unities, all of them are immediately two-sided, and moreover, any element that cancels all associators does so on both sides. We can then describe the subspace $HU_n(A)$ of hom-unities through the unitali\-zation process: it is the subspace of
			$$AC(\tilde{A})=\left\{\begin{pmatrix}
				a \\ \mu
			\end{pmatrix}\mid a\in N(A),\ [A,A,A]_{\mathrm{as}}\subseteq Eig_{-\mu}(L_a) \right\}$$
			corresponding to $\mu=0$, or $N(A)\cap Ann_A^l([A,A,A]_{\mathrm{as}})$.
			
			First, observe that these algebras are always flexible:
			\begin{equation*}
				[x,y,x]_{\mathrm{as}}=(xy)x-x(yx)=(xy)x-(yx)x=(xy)x-(xy)x=0.
			\end{equation*}
			
			The associator of three elements is
			\begin{equation*}
				[x,y,z]_{\mathrm{as}}=(xy)z-x(yz)=z(xy)-x(zy)=[L_z,L_x](y),
			\end{equation*}
			and thus $[A,A,A]_{\mathrm{as}}=\{[L_z,L_x](y)\mid x,y,z\in A\}$. Using the unitalization process, we obtain a subspace of hom-unities given by the eigenspaces that contain all associators. In terms of $L_a$, this can be expressed as
			\begin{align*}
				(L_a\circ [L_z,L_x])(y)=-\mu([L_z,L_x](y))\Rightarrow (L_a+\mu\cdot \mathrm{id}_A)([L_z,L_x](y))=0.
			\end{align*}
			Now, $HU_n(A)$ is isomorphic to the subspace of $AC(\tilde{A})$ with $\mu=0$, that is
			\begin{equation*}
				HU_n(A)=\{a\in N(A)\mid L_a\circ[L_z,L_x]=0\ \forall x,z\in A \},
			\end{equation*}
			or explicitly in terms of elements as
			\begin{equation*}
				HU_n(A)=\{a\in N(A)\mid a(x(yz))=a(z(yx))\ \forall x,y,z\in A \}.
			\end{equation*}
		\end{example}

	\end{document}